\documentclass[12pt,reqno] {amsart}
\usepackage{xcolor}
\usepackage{eucal}
\usepackage{amsthm}
\usepackage{amsfonts}
\usepackage{amsmath}
\usepackage{amssymb}
\usepackage{comment}
\usepackage{epsfig}
\usepackage[pagebackref,hypertexnames=false, colorlinks, citecolor=red,linkcolor=blue, urlcolor=red]{hyperref}
\newcommand{\ncom}{\newcommand}

\ncom{\ul}{\underline}
\ncom{\ol}{\overline}
\ncom{\bq}{\begin{equation}}
\ncom{\eq}{\end{equation}}
\ncom{\beqn}{\begin{eqnarray*}}
\ncom{\eeqn}{\end{eqnarray*}}
\ncom{\beq}{\begin{eqnarray}}
\ncom{\eeq}{\end{eqnarray}}
\ncom{\nno}{\nonumber}
\ncom{\rar}{\rightarrow}
\ncom{\Rar}{\Rightarrow}
\ncom{\noin}{\noindent}
\ncom{\bc}{\begin{centre}}
\ncom{\ec}{\end{centre}}
\ncom{\sz}{\scriptsize}
\ncom{\rf}{\ref}
\ncom{\sgm}{\sigma}
\ncom{\Sgm}{\Sigma}
\ncom{\dt}{\delta}
\ncom{\Dt}{Delta}
\ncom{\s}{\underline{s}}
\ncom{\p}{\underline{p}}
\ncom{\lmd}{\lambda}
\ncom{\Lmd}{\Lambda}
\ncom{\eps}{\epsilon}
\ncom{\pcc}{\stackrel{P}{>}}
\ncom{\dist}{{\rm\,dist}}
\ncom{\sspan}{{\rm\,span}}
\ncom{\re}{{\rm Re\,}}
\ncom{\im}{{\rm Im\,}}
\ncom{\sgn}{{\rm sgn\,}}
\ncom{\ba}{\begin{array}}
\ncom{\ea}{\end{array}}
\ncom{\eop}{\hfill{{\rule{2.5mm}{2.5mm}}}}
\ncom{\eoe}{\hfill{{\rule{1.5mm}{1.5mm}}}}
\ncom{\eof}{\hfill{{\rule{1.5mm}{1.5mm}}}}
\ncom{\hone}{\mbox{\hspace{1em}}}
\ncom{\htwo}{\mbox{\hspace{2em}}}
\ncom{\hthree}{\mbox{\hspace{3em}}}
\ncom{\hfour}{\mbox{\hspace{4em}}}
\ncom{\hsev}{\mbox{\hspace{7em}}}
\ncom{\vone}{\vskip 2ex}
\ncom{\cH}{{\mathcal H}}
\ncom{\vtwo}{\vskip 4ex}
\ncom{\vonee}{\vskip 1.5ex}
\ncom{\vthree}{\vskip 6ex}
\ncom{\vfour}{\vspace*{8ex}}
\ncom{\norm}{\|\;\;\|}
\ncom{\integ}[4]{\int_{#1}^{#2}\,{#3}\,d{#4}}
\ncom{\inp}[2]{\langle{#1},\,{#2} \rangle}
\ncom{\Inp}[2]{\big\langle{#1},\,{#2} \big\rangle}
\ncom{\vspan}[1]{{{\rm\,span}\#1 \}}}
\ncom{\dm}[1]{\displaystyle {#1}}
\ncom{\Hom}{\operatorname{Hom}}
\ncom{\Hol}{\operatorname{Hol}}
\ncom{\Ps}{\mathcal P_{\underline{s}}}
\ncom{\hl}{\mathcal H}

\ncom{\defin} {\overset {\text {\rm def} }{=}}

\newtheorem{theorem}{\bf Theorem}[section]
\newtheorem{proposition}[theorem]{\bf Proposition}%[section]
\newtheorem{corollary}[theorem]{\bf Corollary}%[section]
\newtheorem{lemma}[theorem]{\bf Lemma}%[section]
\newtheorem{remark}[theorem]{\bf Remark}%[section]
\newtheorem{definition}[theorem]{\bf Definition}%[section]

\def \s{\underline{s}}

\renewcommand{\epsilon}{\varepsilon}
\renewcommand{\kappa}{\varkappa}

\begin{document}

\title[CNP]{Complete Nevanlinna-Pick property of $\mathbb K$-Invariant Reproducing Kernels}

\author[M. Engli\v{s}]{Miroslav Engli\v{s}}
\address[M. Engli\v{s}]{Mathematical Institute in Opava, Silesian University in Opava, Na Rybn\'icku 626/1, 74601, Opava, Czech Republic and Institute of Mathematics, Czech Academy of Sciences, \v{Z}itn\'{a}~25, 11567 Prague 1, Czech Republic} 
\email{englis@math.cas.cz}
\author[S. Hazra ]{Somnath Hazra}
\address[S. Hazra]{ Institute for Advancing Intelligence, TCG Centres for Research and Education in Science and Technology, Kolkata, India} 
\email{somnath.hazra@tcgcrest.org}
\author[P. Pramanick]{Paramita Pramanick}
\address[P. Pramanick]{Statistics and Mathematics Unit, Indian Statistical Institute, Kolkata 700108, India}
\email{paramitapramanick@gmail.com}

\thanks{The first-named author was supported by GA\v{C}R grant no. 25-18042S and RVO funding for I\v{C}O 67985840. Support for the work of the third named author was provided by the Department of Science and Technology (DST) in the form of the Inspire Faculty Fellowship (Ref No. DST/INSPIRE/04/2023/001530).}

\subjclass[2020]{Primary 47A13, 47A45, 46E22 Secondary  47B32, 32A10}
\keywords{Cartan domain, characteristic function, complete Nevanlinna-Pick property}

\date{}

\begin{abstract}
Let $\Omega$ be a Cartan domain and $K = \sum_{\underline s}a_{\underline s}K_{\underline s}$ be a $\mathbb K$-invariant kernel on $\Omega$. In this article, we first obtain a necessary  condition on $K$ to have the complete Nevanlinna-Pick property in terms of the sequence $\{a_{\underline s}\}_{\underline s}$ with the assumption that each $a_{\underline s}$ is non-zero and $K$ is non-vanishing. This generalizes the well-known Kaluza's Lemma in the context of $\mathbb K$-invariant kernels.

The notion of the characteristic function of the classical Sz.-Nagy--Foias Theory is extended to a commuting tuple of $\frac{1}{K}$-contraction where $K$ is an irreducible $\mathbb K$-invariant kernel. An explicit construction of the characteristic function of a $\frac{1}{K}$-contraction is provided. A characterization of a $\mathbb K$-invariant kernel with the complete Nevanlinna-Pick property is obtained via the existence of characteristic functions associated with $\frac{1}{K}$-contractions. 
\end{abstract}

\renewcommand{\thefootnote}{}

\maketitle
%%%%%%%%%%%%%%%%%%%%%%%%%%%%%%%%%%%%%%%%%%%%%%%%%%%%%%%%%%%%%%%%%%%%%%%%%%%%%%%%%%%%%%%%%%%
%%%%%%%%%%%%%%%%%%%%%%%%%%%%%%%%%%%%%%%%%%%%%%%%%%%%%%%%%%%%%%%%%%%%%%%%%%%%%%%%%%%%%%%%%%%

\section{Introduction}
Let $\Omega$ be a Cartan domain in ${\mathbb C}^d$ of rank $r$. These domains are the natural generalization of open unit disc in one complex
variable and open Euclidean unit ball in several complex variables. A complete classification of irreducible Cartan domains is given by \'E. Cartan \cite{Ca}.
The numerical invariants $(r, a, b)$ determine the domain $\Omega $ uniquely up to biholomorphic equivalence. The dimension $d$ is related to the numerical invariants $(r,a,b)$  by the relation $\frac{d}{r}=1+\frac{a}{2}(r-1)+b.$ For more details on Cartan domains, we refer to \cite{Loos},\cite{Arazy}.
Let $G$ be the connected component of identity in $\rm{Aut}(\Omega)$, the group of all the biholomorphic automorphisms of $\Omega.$ Let $\mathbb{K}=\{g \in G : g(0)=0\}$ be the maximal compact subgroup of $G$. Every irreducible Cartan domain $\Omega$ of rank $r$ can be realized as an open unit ball of
a Cartan factor $Z={\mathbb C}^d.$
The space of analytic polynomials $\mathcal P(Z)$ on $Z$ has a natural action of the group $\mathbb K$ by composition, that is, $(k\cdot p)(z)=p(k^{-1} \cdot z)$, $k\in \mathbb K,~ p\in \mathcal P(Z).$

An $r$-tuple of non-negative integers $\s=(s_1, \ldots , s_r)$ is called a {\it signature} if $s_1 \geq \ldots \geq s_r\geq 0$. The set of all signatures is denoted by $\vec{\mathbb N}_0^r$ and $\underline 0$ represents the signature $(0, \ldots, 0)$. If $\underline p = (p_1, \ldots, p_r)$ and $\underline q = (q_1, \ldots, q_r)$ are two signatures, then the notation $\underline p \geq \underline q$ means that $p_i \geq q_i$ for each $1 \leq i \leq r$ and the notation $\underline p > \underline q$ means that that there exists $1 \leq j \leq r$ such that $p_j > q_j$. Let $\{\epsilon_1, \ldots, \epsilon_r\}$ denote the standard ordered basis of $\mathbb C^r$. Note that among all elements of the standard ordered basis of $\mathbb C^r$, only $\epsilon_1$ is a signature. The action of $\mathbb K$ on $\mathcal{P}(Z)$ is not irreducible. In fact, $\mathcal P(Z)$ decomposes into irreducible, mutually $\mathbb{K}$ - inequivalent subspaces $\mathcal P_{\s}$, where $\s$ is a signature. Such a decomposition of $\mathcal{P}(Z)$ is called the {\it Peter-Weyl decomposition} in \cite[Section 3]{Hu} (see also \cite[page 21]{Arazy}). The space $\mathcal{P}_{\s }$ with respect to the Fischer-Fock inner product 
\[\langle p,q\rangle_{F}:=\frac{1}{\pi^d}\int_{\mathbb{C}^d}p(z)\overline{q(z)}e^{-| z|^2}dm(z)\]
is a reproducing kernel Hilbert space, where $dm(z)$ is the Lebesgue measure. The reproducing kernel of $\mathcal{P}_{\s}$ is denoted by $K_{\s}$. 
Let $d_{\s}$ be the dimension of $\mathcal{P}_{\s}$ for every $\s \in \vec{\mathbb N}_0^r.$ Throughout this article, we fix an orthonormal basis $\{\psi_\alpha^{\s}(z)\}_{\alpha=1}^{d_{\s}}$ of $\mathcal{P}_{\s}$ with respect to the Fischer-Fock inner product. Then, the reproducing kernel $K_{\s}$ is given by
\beq \label{ortho} K_{\s}(\boldsymbol{z}, \boldsymbol{w})= \sum_{\alpha=1}^{d_{\s}} \psi_\alpha^{\s}(\boldsymbol{z}) \overline{\psi_\alpha^{\s}(\boldsymbol{w})}. \eeq

A non-negative definite kernel $K : \Omega \times \Omega \to \mathbb C$ is said to be a $\mathbb K$-invariant kernel if $K(k \cdot \boldsymbol{z}, k \cdot \boldsymbol{w}) = K(\boldsymbol{z}, \boldsymbol{w})$ holds for every $\boldsymbol{z}, \boldsymbol{w} \in \Omega$ and $k \in \mathbb K$. Note that, for each signature $\s$, the reproducing kernel $K_{\s}$ of $\mathcal{P}_{\s}$ is a $\mathbb K$-invariant kernel. If $K : \Omega \times \Omega \to \mathbb C$ is a $\mathbb K$-invariant kernel which is holomorphic in $\boldsymbol{z}$ and $\bar{\boldsymbol{w}}$, then there exists a sequence of non-negative real numbers $\{a_{\s}\}_{\s \in \vec{\mathbb N}_0^r}$ such that
$$K(\boldsymbol{z}, \boldsymbol{w}) = \displaystyle \sum_{\s \in \vec{\mathbb N}_0^r} a_{\s} K_{\s}(\boldsymbol{z}, \boldsymbol{w}),$$
for every $\boldsymbol{z}, \boldsymbol{w} \in \Omega$ (cf. \cite{Arazy}). In this article, we consider non-negative definite kernels on $\Omega \times \Omega$ that are holomorphic in $\boldsymbol{z}$ and $\bar{\boldsymbol{w}}$. In particular, if $\Omega$ is an Euclidean unit ball $\mathbb B_d$ in $\mathbb C^d$, and $\mathbb K$ is the group of all $d \times d$ unitary matrices $\mathcal{U}(d)$, then 
$$K(\boldsymbol{z}, \boldsymbol{w}) = \displaystyle \sum_{n = 0}^{\infty} a_{n} \langle\boldsymbol{z}, \boldsymbol{w}\rangle^n,\,\,\boldsymbol{z}, \boldsymbol{w} \in \mathbb B_d,$$
where $\{a_n\}_{n \geq 0}$ is a sequence of non-negative real numbers. The $\mathcal U(d)$-invariant and $\mathbb K$-invariant kernels have been studied extensively over the past few decades (\cite{Arazy, Tiret, Hartz, GKP}). 

A non-negative definite kernel $K : \Omega \times \Omega \to \mathbb C$ is said to be normalized at a point $z_0$ in $\Omega$, if $K(z, z_0) = 1$ for every $z \in \Omega$. Note that if  $K(\boldsymbol{z}, \boldsymbol{w}) = \sum_{\s \in \vec{\mathbb N}_0^r} a_{\s} K_{\s}(\boldsymbol{z}, \boldsymbol{w}),\,\,\boldsymbol{z}, \boldsymbol{w} \in \Omega,$
is a $\mathbb K$-invariant kernel, then $K(\boldsymbol{z}, \boldsymbol{0}) = a_{\underline 0}$ for every $\boldsymbol{z} \in \Omega$. As a consequence, $\frac{1}{a_{\underline 0}} K$ is a $\mathbb K$-invariant kernel which is normalized at $\boldsymbol{0} \in \Omega$. \textit{In this article, we always work with a normalized $\mathbb K$-invariant kernel, that is, a kernel $K = \sum_{\s \in \vec{\mathbb N}_0^r} a_{\s} K_{\s}$ on $\Omega$ with $a_{\underline 0} = 1$. We also assume that $a_{\s} > 0$ for every signature $\s$.} 

Given a non-negative definite kernel $K: \Omega \times \Omega \to \mathbb C$, the Moore-Aronszajn Theorem says that there exists a Hilbert space $H_K$ such that $K$ is the reproducing kernel of $H_K$. \textit{Throughout this article, $H_K$ represents the Hilbert space with the reproducing kernel $K$.} Our objective is to classify $\mathbb K$-invariant kernels that have the complete Nevanlinna-Pick property. Let us recall the definition of the complete Nevanlinna-Pick property of a reproducing kernel.

\begin{definition}
Let $X$ be a subset of $\mathbb C^d$. A reproducing kernel $K : X \times X \to \mathbb C$ is said to have $M_{p\times p}$ Nevanlinna-Pick property if, whenever $\boldsymbol z_1,\ldots, \boldsymbol z_N\in X$ and  $W_1, \ldots, W_N$ are $p \times p$ matrices such that 
$$\left(\!\left((I-W_iW_j^*)K(\boldsymbol z_i, \boldsymbol z_j)\right)\!\right)_{i, j = 1}^N,$$
is non-negative definite, then there exists a multiplier $\phi$ in the closed unit ball of the multiplierr algebra $\text{Mult}( H_K \otimes \mathbb C^p, H_K \otimes \mathbb C^p)$ such that $\phi(\boldsymbol z_i)=W_i, i=1, \ldots, N.$ The kernel $K$ is said to have the complete Nevanlinna-Pick property if it has the $M_{p\times p}$ Nevanlinna-Pick property for all positive integers $p.$ A reproducing kernel with the complete Nevanlinna-Pick property is called the complete Nevanlinna-Pick (CNP) kernel.
\end{definition}

Let $K_{\mathcal{U}}(\boldsymbol{z}, \boldsymbol{w}) = \sum_{n \geq 0} a_n \langle \boldsymbol{z}, \boldsymbol{w}\rangle^n$ be a non-vanishing $\mathcal{U}(d)$-invariant kernel on $\mathbb B_d$ with each $a_n$ being positive. Then, there exists a sequence of real numbers $\{\hat{b}_n\}_{n \geq 0}$ such that 
\begin{equation}\label{eqn:intro1}
\frac{1}{K_{\mathcal{U}}(\boldsymbol{z}, \boldsymbol{w})} = \sum_{n\geq 0} \hat{b}_n\langle \boldsymbol{z}, \boldsymbol{w}\rangle^n,
\end{equation}
for every $\boldsymbol{z}, \boldsymbol{w} \in \mathbb B_d$. Equation \eqref{eqn:intro1} is one of the main ingredients of the Kaluza lemma which provides a necessary condition for $K_{\mathcal{U}}$ to be CNP (cf. \cite{Agler}). The Kaluza lemma is named after Theodor Kaluza who proved it in terms of power series of one complex variable \cite{KALU}.

\begin{theorem}\label{TK Thm}
Let $\{c_n\}_{n \geq 0}$ be a sequence of positive real numbers with $c_0 = 1$ and $M$ be a positive real number. If the sequence of real numbers $\left\lbrace \frac{c_n}{c_{n-1}} : n > 0 \right\rbrace$ is non-decreasing and bounded above by $M$, then $f(z) = \sum_{n \geq 0} c_n z^n$ converges for all $z \in B(0, 1/M) := \{z \in \mathbb C : |z| < 1/M\}$ and there exists a sequence of non-negative real numbers $\{q_n\}_{n > 0}$ such that 
$$1 - \frac{1}{f(z)} = \displaystyle \sum_{n \geq 1} q_n z^n,\,\,z \in B(0, 1/M).$$
\end{theorem}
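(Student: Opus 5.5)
The plan is to follow Kaluza's classical argument through the recursion for the coefficients of $1/f$.

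\emph{Convergence.} Since $c_n/c_{n-1}\le M$ and $c_0=1$, induction gives $c_n\le M^n$. Hence $f$ converges absolutely on $B(0,1/M)$. Moreover $f(0)=1$, so $1/f$ is holomorphic near $0$.

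\emph{Recursion.} Write $1-\frac{1}{f(z)}=\sum_{n\ge1}q_nz^n$ as formal power series. The identity $f(z)\bigl(1-\sum_{n\ge1}q_nz^n\bigr)=1$ is equivalent to
\[
c_n=\sum_{k=1}^{n}q_k\,c_{n-k},\qquad n\ge1 .
\]
This determines the $q_n$ recursively; in particular $q_1=c_1>0$.

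\emph{Nonnegativity.} This is the core of the proof and the step I expect to be the main obstacle; the argument is Kaluza's inductive trick. Subtract $\frac{c_{n}}{c_{n-1}}$ times the identity for $c_{n-1}$ from the identity for $c_n$:
\[
0=c_n-\frac{c_n}{c_{n-1}}c_{n-1}=q_n c_0+\sum_{k=1}^{n-1}q_k\Bigl(c_{n-k}-\frac{c_n}{c_{n-1}}c_{n-1-k}\Bigr).
\]
Since $c_0=1$, this gives
\[
q_n=\sum_{k=1}^{n-1}q_k\,c_{n-1-k}\Bigl(\frac{c_n}{c_{n-1}}-\frac{c_{n-k}}{c_{n-1-k}}\Bigr).
\]
Because the ratios are non-decreasing and $n-k\le n$, each bracket is $\ge0$. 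By induction on $n$, assuming $q_1,\dots,q_{n-1}\ge0$, every term is nonnegative, so $q_n\ge0$.

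\emph{Convergence on the full disc.} It remains to show that $\sum q_nz^n$ converges on all of $B(0,1/M)$ and equals $1-1/f$ there, not only near $0$. I would argue by positivity. For $0\le x<1/M$, consider the partial sums of the recursion truncated at $N$:
\[
\sum_{n=1}^{N}c_nx^n=\sum_{n=1}^{N}\sum_{k=1}^{n}q_kx^k\,c_{n-k}x^{n-k}\ \ge\ \Bigl(\sum_{k=1}^{N}q_kx^k\Bigr)\cdot c_0=\sum_{k=1}^{N}q_kx^k .
\]
Hence $\sum_k q_kx^k\le f(x)-1<\infty$, so the series $g(z)=\sum_{n\ge1} q_nz^n$ has radius of convergence at least $1/M$ and converges absolutely on $B(0,1/M)$.

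On $B(0,1/M)$, the Cauchy product of the absolutely convergent series $f$ and $g$ yields $f(z)\bigl(1-g(z)\bigr)=1$. In particular $f$ does not vanish there, and $1-\frac{1}{f(z)}=g(z)$ throughout $B(0,1/M)$, as claimed.
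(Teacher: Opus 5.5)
Your proof is correct. Its core step, scaling the identity for $c_{n-1}$ by $\frac{c_n}{c_{n-1}}$ and subtracting (valid for $n\ge 2$, with $q_1=c_1$ as the base case), is exactly the mechanism of the paper's proof of Theorem~\ref{genKaluza}, whose rank-one specialization is your recursion $q_n=\sum_{k=1}^{n-1}q_k\,c_{n-1-k}\bigl(\frac{c_n}{c_{n-1}}-\frac{c_{n-k}}{c_{n-1-k}}\bigr)$; the paper itself only cites this theorem. Your positivity bound $\sum_{k\le N}q_kx^k\le f(x)-1$ goes further: it gives convergence of $\sum q_nz^n$ on all of $B(0,1/M)$ and hence zero-freeness of $f$, a point the paper never argues, since in the generalized setting it assumes $K$ is non-vanishing and takes the expansion of $1/K$ from Proposition~\ref{pp2}.
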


A kernel $K : \Omega \times \Omega \to \mathbb C$ is said to be irreducible if $K(\cdot, \boldsymbol{w}_1)$ and $K(\cdot, \boldsymbol{w}_2)$ are linearly independent for any two distinct points $\boldsymbol{w}_1, \boldsymbol{w}_2 \in \Omega$ and $K(\boldsymbol{z}, \boldsymbol{w}) \neq 0$ for every $\boldsymbol{z}, \boldsymbol{w} \in \Omega$. A well-known characterization of an irreducible kernel $K: \Omega \times \Omega \to \mathbb C$ having the CNP property is that $1 - \frac{1}{K}$ must be non-negative definite (cf. \cite[Theorem 7.28]{Agler}). Combining this characterization with Theorem \ref{TK Thm}, we obtain the following necessary condition for a $\mathcal{U}(d)$-invariant kernel to have the CNP property (cf. \cite[Lemma 7.38]{Agler}).

\begin{lemma}\label{kalu lem}
An irreducible $\mathcal{U}(d)$-invariant kernel $K_{\mathcal{U}} = \sum_{ n \geq 0} a_n \langle \boldsymbol{z}, \boldsymbol{w}\rangle^n$ on $\mathbb B_d$ with $a_{n} > 0$ for each $n$ is CNP if 
$$\frac{a_n}{a_{n-1}} \leq \frac{a_{n+1}}{a_n}$$
holds for all $n \geq 1$.
\end{lemma}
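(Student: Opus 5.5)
We use the characterization recalled just before the statement: an irreducible kernel $K$ is CNP if and only if $1-\frac{1}{K}$ is non-negative definite (\cite[Theorem 7.28]{Agler}). So it suffices to show that
$$1-\frac{1}{K_{\mathcal U}(\boldsymbol z,\boldsymbol w)} = \sum_{n\ge 1} q_n \langle \boldsymbol z,\boldsymbol w\rangle^n$$
with every $q_n\ge 0$.

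First we normalize. Set $c_n = a_n/a_0$ and $f(t)=\sum_n c_n t^n$, so that $K_{\mathcal U}(\boldsymbol z,\boldsymbol w) = a_0 f(\langle \boldsymbol z,\boldsymbol w\rangle)$. Since the paper works with normalized kernels, we may take $a_0=1$. The hypothesis says the ratios $c_n/c_{n-1}=a_n/a_{n-1}$ are non-decreasing.

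Next we need an upper bound $M$ for these ratios, in order to invoke Theorem \ref{TK Thm}. This is the step needing care. The series $\sum a_n\langle \boldsymbol z,\boldsymbol w\rangle^n$ converges on $\mathbb B_d\times\mathbb B_d$, so $f$ has radius of convergence at least $1$. A non-decreasing ratio sequence converges to its supremum $L$, and the ratio test gives radius $1/L$. Hence $L\le 1$, and we may take $M=1$. Theorem \ref{TK Thm} then gives non-negative $q_n$ with $1-\frac{1}{f(t)}=\sum_{n\ge1} q_n t^n$ for $|t|<1$.

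Finally we substitute $t=\langle \boldsymbol z,\boldsymbol w\rangle$. Since $|t|<1$ on $\mathbb B_d\times \mathbb B_d$, this gives $1-\frac1K=\sum_{n\ge1} q_n\langle \boldsymbol z,\boldsymbol w\rangle^n$. Each $\langle \boldsymbol z,\boldsymbol w\rangle^n$ is a positive kernel by the Schur product theorem. A convergent sum of positive kernels with non-negative coefficients is positive, so $1-\frac1K\ge 0$.

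Irreducibility is part of the hypothesis, so the characterization applies and $K_{\mathcal U}$ is CNP. If $a_0\neq 1$, we apply the argument to $K/a_0$, since rescaling by a positive constant preserves the CNP property.
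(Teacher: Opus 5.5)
Your proof is correct and takes the same route the paper indicates: combine the characterization of irreducible CNP kernels by non-negative definiteness of $1-\frac{1}{K}$ (Agler--McCarthy, Theorem 7.28) with Kaluza's Theorem \ref{TK Thm}. You also fill in details the paper leaves implicit, and they are sound: normalizing to $a_0=1$, getting the ratio bound $M=1$ from convergence of $K_{\mathcal U}$ on $\mathbb B_d\times\mathbb B_d$, and the Schur-product positivity of $\langle \boldsymbol z,\boldsymbol w\rangle^n$.
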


%We use this characterization to obtain a necessary and sufficient condition for a $\mathbb K$-invariant kernel $K = \sum_{\s \in \vec{\mathbb N}_0^r} a_{\s} K_{\s}$ to be CNP in terms of the coefficients $a_{\s},\,\,\s \in \vec{\mathbb N}_0^r$. 
%\textit{Therefore, throughout this article, we assume that a  $\mathbb K$-invariant kernel is irreducible.}  

Suppose $K = \sum_{\s \in \vec{\mathbb N}_0^r} a_{\s} K_{\s}$ is a $\mathbb K$-invariant kernel on $\Omega$. In Section 2, we first obtain a sequence of real numbers $\{\hat{b}_{\s}\}_{\s \in \vec{\mathbb N}_0^r}$ such that 
\begin{equation}\label{eqn:intro2}
   \frac{1}{K(\boldsymbol{z}, \boldsymbol{w})} = \displaystyle \sum_{\s \in \vec{\mathbb N}_0^r} \hat{b}_{\s}K_{\s} (\boldsymbol{z}, \boldsymbol{w})
\end{equation}
holds for every $\boldsymbol{z}, \boldsymbol{w} \in \Omega.$ Assuming $K$ to be an irreducible kernel, we generalize Lemma \ref{kalu lem} to find a necessary condition for $K$ to be CNP in terms of the coefficients $a_{\s},\,\,\s \in \vec{\mathbb N}_0^r$. We also provide examples of $\mathbb K$-invariant CNP kernels on $\Omega$ such that the multiplication operators by the coordinate functions on the corresponding Hilbert spaces are bounded.  
%Henceforth, we always assume that a $\mathbb K$-invariant kernel $K = \sum_{\s \in \vec{\mathbb N}_0^r} a_{\s} K_{\s}$ on $\Omega$ is non-vanishing.

Given a contraction $T$ on a Hilbert space $\mathcal{H}$, let $D_T := (I - T^*T)^{\frac{1}{2}}$, $D_{T^*} := (I - TT^*)^{\frac{1}{2}}$ and $\mathcal{D}_{T} := \overline{\mbox{Ran}\,\, D_{T}}$, $\mathcal{D}_{T^*} := \overline{\mbox{Ran}\,\, D_{T^*}}$. The operators $D_T$, $D_{T^*}$ are called the defect operators and the spaces $\mathcal{D}_{T}$, $\mathcal{D}_{T^*}$ are called the defect spaces of $T$. The operator valued holomorphic function $\theta_T : \mathbb D \to \mathcal{B}(\mathcal{D}_{T}, \mathcal{D}_{T^*})$, defined by 
$$\theta_T(z) = \left(-T + zD_{T^*}(I - zT^*)^{-1}D_T\right)|_{\mathcal{D}_{T}},\,\,z \in \mathbb D,$$
is called the characteristic function of $T$. The classical Sz.-Nagy--Foias theory says that two completely nonunitary (cnu) contractions $T_1$ and $T_2$ are unitarily equivalent if and only if the characteristic functions $\theta_{T_1}$ and $\theta_{T_2}$ coincide (that is, there exist two unitaries $u : \mathcal{D}_{T_1} \to \mathcal{D}_{T_2}$ and $v : \mathcal{D}_{T_1^*} \to \mathcal{D}_{T_2^*}$ such that $v\theta_{T_1}(z) = \theta_{T_2}(z)u,\,\,z \in \mathbb D$) (cf. \cite[Theorem 3.4]{Na-Fo}).

Note that $T$ is a contraction if and only if $\frac{1}{\mathbb S}(T, T^*) := I - TT^*$ is a positive operator, where $\mathbb S : \mathbb D \times \mathbb D \to \mathbb C$ is the Szeg\H{o} kernel $\mathbb S(z, w) = \frac{1}{1 - z\bar w},\,\,z, w \in \mathbb D$. Let $K_{D} : \mathbb B_d \times \mathbb B_d \to \mathbb C$ be defined by $K_D(\boldsymbol{z}, \boldsymbol{w}) = \frac{1}{1 - \langle \boldsymbol{z}, \boldsymbol{w}\rangle},\,\,\boldsymbol{z}, \boldsymbol{w} \in \mathbb B_d$. The kernel $K_D$ is called the Drury-Arveson kernel. A commuting tuple of operators $\boldsymbol{T} = (T_1, \ldots, T_d)$ is a $\frac{1}{K_D}$-contraction if and only if $\frac{1}{K_D}(\boldsymbol{T}, \boldsymbol{T}^*) = I - \sum_{i=1}^d T_i T_i^*$ is a positive operator. Such a tuple of operators is called a row contraction. For a commuting row contraction $\boldsymbol{T}$, the characteristic function $\theta_{\boldsymbol{T}} : \mathbb B_d \to \mathcal{B}(\mathcal{D}_{\boldsymbol{T}}, \mathcal{D}_{\boldsymbol{T}^*})$ of $\boldsymbol{T}$ is defined in a similar fashion to the classical characteristic function of a single contraction, where $\mathcal{D}_{\boldsymbol{T}}$ and $\mathcal{D}_{\boldsymbol{T}^*}$ are defect spaces introduced in the usual way. The characteristic function of a pure commuting row contraction $\boldsymbol{T}$ determines the unitary equivalence class of $\boldsymbol{T}$ (cf. \cite{Tiret2}). 

Given a non-vanishing $\mathcal{U}(d)$-invariant kernel $K_{\mathcal{U}}(\boldsymbol{z}, \boldsymbol{w}) = \sum_{n \geq 0} a_n \langle \boldsymbol{z}, \boldsymbol{w}\rangle^n$ on $\mathbb B_d$  and a commuting $d$-tuple of operators $\boldsymbol{T} = (T_1, \ldots, T_d)$ on a Hilbert space $\mathcal{H}$, the $\frac{1}{K_{\mathcal{U}}}$-contractivity of $\boldsymbol{T}$ is defined in a similar fashion of a $\frac{1}{K_D}$-contraction due to Equation \eqref{eqn:intro1} (\cite[Definition 1.1]{Tiret}). A characterization of $\mathcal{U}(d)$-invariant CNP kernels over $\mathbb B_d$ is obtained in terms of the existence of the characteristic function of $\frac{1}{K_{\mathcal{U}}}$-contractions in \cite{Tiret}. In particular, if $K_{\mathcal{U}}$ also has the CNP property, then an explicit construction of the characteristic function of a $\frac{1}{K_{\mathcal{U}}}$-contraction is given in the final section of \cite{Tiret} and it is proved that the characteristic function of a pure $\frac{1}{K_{\mathcal{U}}}$-contraction determines it's unitary equivalence class, see \cite{Tiret}. 

Suppose $K = \sum_{\s \in \vec{\mathbb N}_0^r} a_{\s} K_{\s}$ is a $\mathbb K$-invariant kernel on $\Omega$. In section 3, we provide a similar definition of $\frac{1}{K}$-contractivity of a commuting tuple of operators $\boldsymbol{T} = (T_1, \ldots, T_d)$ on a Hilbert space $\mathcal{H}$. The main objective of this section is to obtain a necessary and sufficient condition of a $\mathbb K$-invariant kernel on $\Omega$ to have the CNP property in terms of the existence of the characteristic function of $\frac{1}{K}$-contractions. In the final section, we assume that $K$ aslo has the CNP property. Then, we provide an explicit construction of the characteristic function of a $\frac{1}{K}$-contraction $\boldsymbol{T}$ for certain $\mathbb K$-invariant kernels. We also prove that, for such kernels, the unitary equivalance class of a pure$\frac{1}{K}$-contraction is determined by the characteristic function.

\section{A generalization of Kaluza lemma for $\mathbb K$-invariant kernels}

%In this section, we generalize the well-known Kaluza's lemma providing a criteria of a non-vanishing $\mathbb K$-invariant kernel to have CNP property.

In this section, we provide a criteria of a non-vanishing $\mathbb K$-invariant kernel $K = \sum_{\s} a_{\s} K_{\s}$ on $\Omega$
to have CNP property in terms of the sequence $\{a_{\s}\}_{\s \geq \underline 0}$. We begin this section with a criterion of convergence of a $\mathbb K$-invariant function defined on $\Omega \times \Omega$ which is holomorphic in the first $d$-variables and anti-holomorphic in the last $d$-variables.

\begin{proposition} \label{pp1}
Let 
\begin{equation}\label{sA}
    F(\boldsymbol{z}, \boldsymbol{w}) := \sum_{\s} c_{\s} K_{\s}(\boldsymbol{z}, \boldsymbol{w}),
\end{equation}
be a $\mathbb K$-invariant function on $\Omega\times\Omega$ and $e$ denote any maximal tripotent of $\Omega$.

{\rm (i)} If $F$ converges absolutely on the closure of $\Omega\times\Omega$ and is bounded in modulus by $1$, then
\begin{equation} \label{sB}
  \sum_{\s} |c_{\s}| K_{\s}(e,e) \leq 1.  
\end{equation}
Conversely, if \eqref{sB} holds, then $F$ converges absolutely and uniformly on the closure of $\Omega\times\Omega$, and is bounded in modulus by $1$ there.

{\rm (ii)} If $F$ converges absolutely on $\Omega\times\Omega$, then
\begin{equation}\label{sC}
S(t) := \sum_{\s} |c_{\s}| K_{\s}(te,te) < \infty,    
\end{equation}
for all $t \in (0, 1)$.
Conversely, if \eqref{sC} holds, then $F$ converges absolutely and uniformly on compact subsets of $\Omega\times\Omega$.

\end{proposition}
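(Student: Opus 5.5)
The plan is to reduce everything to a Cauchy--Schwarz estimate for each $K_{\s}$ together with the maximum principle on the Shilov boundary. First, $K_{\s}$ is a positive definite kernel, so
\[
|K_{\s}(\boldsymbol z,\boldsymbol w)|^2\le K_{\s}(\boldsymbol z,\boldsymbol z)K_{\s}(\boldsymbol w,\boldsymbol w).
\]
Second, $K_{\s}(\boldsymbol z,\boldsymbol z)$ is $\mathbb K$-invariant. By the polar decomposition $\boldsymbol z=k\sum_j t_j e_j$ with $1>t_1\ge\dots\ge t_r\ge 0$, it is a polynomial in $t_1^2,\dots,t_r^2$ with nonnegative coefficients; this follows from the expansion of $K_{\s}(\boldsymbol z,\boldsymbol z)$ in orthonormal bases adapted to the frame. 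Hence it is increasing in each $t_j$. On $\overline{\Omega}$ its maximum is therefore attained at $t_j=1$ for all $j$, that is, at a maximal tripotent, and the value $K_{\s}(e,e)$ is independent of the choice of $e$ because all maximal tripotents are $\mathbb K$-conjugate. Similarly, on the compact set $t\overline{\Omega}$ the maximum is $K_{\s}(te,te)$. So $\sup_{\overline\Omega\times\overline\Omega}|K_{\s}|=K_{\s}(e,e)$, and $\sup_{t\overline\Omega\times t\overline\Omega}|K_{\s}|=K_{\s}(te,te)$.

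With this, both converse directions are immediate by the Weierstrass M-test. In (i), $\sum|c_{\s}|\,|K_{\s}(\boldsymbol z,\boldsymbol w)|\le\sum|c_{\s}|K_{\s}(e,e)\le1$ on $\overline\Omega\times\overline\Omega$. In (ii), every compact subset of $\Omega\times\Omega$ lies in some $t\overline\Omega\times t\overline\Omega$ with $t<1$, and there the series is dominated by $S(t)$.

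For the forward directions, the point is that the diagonal value at $e$ cannot be reached by cancellation unless the coefficients are controlled. I would use the $\mathbb K$-invariance to average out phases. Fix $(\boldsymbol z,\boldsymbol w)=(te,te)$ in (ii), or $(e,e)$ in (i). Consider $F(\zeta\boldsymbol z,\boldsymbol w)$ with $\zeta$ in the circle group, which lies in $\mathbb K$ (the circle action $\boldsymbol z\mapsto\zeta\boldsymbol z$). Since $K_{\s}$ is homogeneous of degree $|\s|$ in $\boldsymbol z$, this gives
\[
F(\zeta \boldsymbol z,\boldsymbol w)=\sum_n \zeta^n\sum_{|\s|=n}c_{\s}K_{\s}(\boldsymbol z,\boldsymbol w).
\]
This only separates the degrees, not the individual signatures with equal $|\s|$, so pure phase averaging is not enough. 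To separate them, I would instead use absolute convergence directly. In (ii), absolute convergence of $F$ at the point $(te,te)\in\Omega\times\Omega$ means exactly $\sum_{\s}|c_{\s}|\,|K_{\s}(te,te)|<\infty$, and $K_{\s}(te,te)\ge0$, so $S(t)<\infty$. In (i), absolute convergence at $(e,e)$ similarly gives $\sum|c_{\s}|K_{\s}(e,e)<\infty$.

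The bound by $1$ in (i) is the main obstacle. Here I would use that $F(\boldsymbol z,\boldsymbol w)$ being bounded by $1$ forces the kernel-type coefficients to be dominated. One way is to fix $\boldsymbol w=e$ and integrate $|F(\boldsymbol z,e)|^2$ against the Shilov boundary measure, using that the $K_{\s}(\cdot,e)$ are mutually orthogonal in $H^2(\partial\Omega)$ with norms computable from Faraut--Korányi. If that route fails, I would instead combine the phase averaging over degrees with an averaging over the torus $\sum_j\zeta_j e_j$ of the frame. That averaging isolates joint homogeneity in the frame coordinates, and the signature $\s$ can then be read off from the leading monomial of $K_{\s}(\sum\zeta_j e_j,e)$, peeling off one signature at a time by induction in dominance order. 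This would bound each partial sum of $\sum|c_{\s}|K_{\s}(e,e)$ by $\sup|F|\le1$, and that is the step I expect to require real work.
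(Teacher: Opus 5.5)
The converse directions and the forward half of (ii) in your proposal are sound. The proposal breaks at the forward half of (i), the claim that $|F|\le1$ on $\overline\Omega\times\overline\Omega$ forces $\sum_{\s}|c_{\s}|K_{\s}(e,e)\le1$. No averaging argument can close this step, because for complex or sign-changing coefficients the inequality is false.

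Fix $0<a<1$ and let $f(\lambda)=\frac{\lambda-a}{1-a\lambda}=\sum_{n\ge0}f_n\lambda^n$. Then $f_0=-a$, $f_n=(1-a^2)a^{n-1}$ for $n\ge1$, and $\sum_n|f_n|=1+2a$. Put $F(\boldsymbol z,\boldsymbol w):=f\bigl(\langle\boldsymbol z,\boldsymbol w\rangle/\langle e,e\rangle\bigr)$.

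\begin{itemize}
\item $F$ is $\mathbb K$-invariant, since $\langle\boldsymbol z,\boldsymbol w\rangle=K_{\epsilon_1}(\boldsymbol z,\boldsymbol w)$ is.
\item By your own estimate, $|K_{\epsilon_1}(\boldsymbol z,\boldsymbol w)|\le K_{\epsilon_1}(e,e)=\langle e,e\rangle$ on $\overline\Omega\times\overline\Omega$, so $|F|\le1$ there.
\item In the Fischer--Fock normalization $\langle\boldsymbol z,\boldsymbol w\rangle^n=n!\sum_{|\s|=n}K_{\s}(\boldsymbol z,\boldsymbol w)$, so $c_{\s}=|\s|!\,f_{|\s|}\,\langle e,e\rangle^{-|\s|}$.
\end{itemize}
Hence
\[
\sum_{\s}|c_{\s}|K_{\s}(e,e)=\sum_{n\ge0}|f_n|\,\frac{n!\sum_{|\s|=n}K_{\s}(e,e)}{\langle e,e\rangle^{n}}=\sum_{n\ge0}|f_n|=1+2a>1.
\]
Meanwhile the series for $F$ converges absolutely and uniformly on $\overline\Omega\times\overline\Omega$ by the converse estimate. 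On the disc this is just the Blaschke factor $f(z\bar w)$.

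Sup-norm control does not give $\ell^1$ control of the coefficients. Your $H^2(\partial\Omega)$ route yields only the $\ell^2$-type bound $\sum_{\s}|c_{\s}|^2\|K_{\s}(\cdot,e)\|_{H^2}^2\le1$. The torus peeling can at best bound the coefficients one at a time.

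The forward half of (i) is true, and then immediate, in two situations:
\begin{itemize}
\item when ``bounded in modulus by $1$'' is read as $\sum_{\s}|c_{\s}K_{\s}(\boldsymbol z,\boldsymbol w)|\le1$, which is exactly what the converse produces, so (i) becomes an equivalence;
\item when all $c_{\s}\ge0$, as for kernels and for $1-1/K$ with $K$ CNP.
\end{itemize}
In either case you simply evaluate at $(e,e)$, using $K_{\s}(e,e)\ge0$. This is the paper's one-line proof, which tacitly relies on one of these readings. Under your reading ($|F|\le1$ with arbitrary coefficients), what survives is the finiteness $\sum_{\s}|c_{\s}|K_{\s}(e,e)<\infty$ you already obtained. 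That is also all that (ii) and its use in Proposition~\ref{pp2} require.

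On the parts that do work:
\begin{itemize}
\item Reading off the forward half of (ii) directly from absolute convergence at $(te,te)$ is cleaner than the paper's reduction to (i) for $F(t\boldsymbol z,t\boldsymbol w)/M_t$.
\item Your monotonicity-in-$t_j$ argument is a valid substitute for the paper's maximum-modulus step on the Shilov boundary. You still need to justify the nonnegative coefficients in $t_j^2$. One way: use the torus in $\mathbb K$ acting by $e_j\mapsto\zeta_je_j$ and an orthonormal basis of $\mathcal P_{\s}$ consisting of weight vectors; each restricts to a monomial on $\sum_jt_je_j$.
\end{itemize}
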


\begin{proof} (i) Taking $z=w=e$ shows that the assumption on \eqref{sA} implies \eqref{sB}.

For the converse part, recall that a function holomorphic on $\Omega$ and continuous on its closure attains its maximum modulus on the Shilov boundary, and the Shilov boundary of $\Omega$ is precisely the orbit $\{ke: k\in \mathbb K\}$ of a maximal tripotent $e$ (see \cite{Arazy}). Hence
$$ |K_{\s}(\boldsymbol{z}, \boldsymbol{w})| \leq |K_{\s}(k_1e,k_2e)| $$
for all $\boldsymbol{z}, \boldsymbol{w} \in \overline \Omega$ and for some $k_1,k_2\in\mathbb K$. Now by Cauchy-Schwarz
$$ |K_{\s}(k_1e,k_2e)|^2 \leq K_{\s}(k_1e,k_1e) K_{\s}(k_2e,k_2e) = K_{\s}(e,e)^2  $$
thanks to the $\mathbb K$-invariance of $K_{\s}$. Consequently,
$$ \sum_{\s} |c_{\s} K_{\s}(\boldsymbol z, \boldsymbol w)| \leq \sum_{\s} |c_{\s}| K_{\s}(e,e) \,\,\forall \boldsymbol z, \boldsymbol w\in\overline\Omega, $$
and the claim follows.

(ii) Let $t \in (0, 1)$. Suppose $M_t > 0$ is such that $|F(t\boldsymbol{z}, t\boldsymbol{w})| \leq M_t$ for all $\boldsymbol{z}, \boldsymbol{w} \in \Omega$. Now, apply part (i) to the functions $F(t\boldsymbol{z},t\boldsymbol{w})/M_t$.
\end{proof}

The following proposition provides an explicit description of a $\mathbb K$-invariant function on $\Omega \times \Omega$, holomorphic in $\boldsymbol{z}, \bar{\boldsymbol{w}}$. The proof is modeled on the proof of \cite[Proposition 2]{AEq}.

\begin{proposition} \label{pp2}
If $F(\boldsymbol{z}, \boldsymbol{w})$ is a $\mathbb K$-invariant function on $\Omega\times\Omega$ holomorphic in $(\boldsymbol{z},\bar{\boldsymbol{w}})$, then it has an expansion
\begin{equation}\label{sD}
   F(\boldsymbol z, \boldsymbol w) = \sum_{\s} c_{\s} K_{\s}(\boldsymbol z, \boldsymbol w) 
\end{equation}
with some $c_{\s}$ satisfying \eqref{sC}.
\end{proposition}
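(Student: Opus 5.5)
Since $F$ is holomorphic in $(\boldsymbol z,\bar{\boldsymbol w})$ on $\Omega\times\Omega$, which is a complete circular (indeed $\mathbb K$-invariant, balanced) domain, I would first expand $F$ into its bihomogeneous Taylor series
$$F(\boldsymbol z,\boldsymbol w)=\sum_{m,n\ge 0} F_{m,n}(\boldsymbol z,\boldsymbol w),$$
where $F_{m,n}$ is a polynomial homogeneous of degree $m$ in $\boldsymbol z$ and of degree $n$ in $\bar{\boldsymbol w}$. This series converges absolutely and uniformly on compact subsets of $\Omega\times\Omega$. Each $F_{m,n}$ can be recovered from $F$ by integrating over the circle action, $F_{m,n}(\boldsymbol z,\boldsymbol w)=\frac{1}{(2\pi)^2}\int\!\!\int F(e^{i\theta}\boldsymbol z,e^{i\phi}\boldsymbol w)e^{-im\theta}e^{in\phi}\,d\theta\, d\phi$. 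Because the circle action commutes with $\mathbb K$, every $F_{m,n}$ inherits $\mathbb K$-invariance. Moreover, $\mathbb K$ contains the scalar circle $e^{i\theta}I$, so $F(e^{i\theta}\boldsymbol z,e^{i\theta}\boldsymbol w)=F(\boldsymbol z,\boldsymbol w)$. This forces $F_{m,n}=0$ for $m\ne n$, so only the diagonal terms $F_{m,m}$ survive.

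Next I would identify each $F_{m,m}$. Write $F_{m,m}(\boldsymbol z,\boldsymbol w)=\sum_{\alpha,\beta} t_{\alpha\beta}\phi_\alpha(\boldsymbol z)\overline{\phi_\beta(\boldsymbol w)}$ with $\{\phi_\alpha\}$ a Fischer--Fock orthonormal basis of $\mathcal P_m(Z)=\bigoplus_{|\s|=m}\mathcal P_{\s}$. Equivalently, $F_{m,m}$ corresponds to a linear operator $T_m$ on $\mathcal P_m(Z)$ via $F_{m,m}(\boldsymbol z,\boldsymbol w)=\langle T_m K^{(m)}_{\boldsymbol w},K^{(m)}_{\boldsymbol z}\rangle_F$. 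Since the Fischer--Fock product is $\mathbb K$-invariant, $\mathbb K$-invariance of $F_{m,m}$ is equivalent to $T_m$ commuting with the $\mathbb K$-action. The $\mathcal P_{\s}$ are irreducible and mutually inequivalent, so Schur's lemma gives $T_m=\sum_{|\s|=m}c_{\s}I_{\mathcal P_{\s}}$. Hence $F_{m,m}=\sum_{|\s|=m}c_{\s}K_{\s}$, and summing over $m$ yields \eqref{sD}. Note that the $c_{\s}$ may be complex; nothing more is claimed.

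Finally I must verify \eqref{sC}, i.e.\ absolute convergence of $\sum_{\s}|c_{\s}|K_{\s}(te,te)$, and I expect this step to be the main obstacle. The plan is to estimate each $c_{\s}$ by a Cauchy-type integral. Fix $t<t'<1$. The function $F(t'\boldsymbol z,t'\boldsymbol w)$ is continuous on $\overline\Omega\times\overline\Omega$, hence bounded by some $M$. Using orthogonality of the $\mathcal P_{\s}$ under a $\mathbb K$-invariant probability measure on the Shilov boundary $\{ke\}$, together with Schur orthogonality, I obtain
$$c_{\s}\,t'^{2|\s|}\,\frac{K_{\s}(e,e)}{d_{\s}}=\int_{\mathbb K}F(t'ke,t'ke)\,\chi\text{-type projection}\ldots$$
More concretely, I would take $c_{\s}t'^{2|\s|}K_{\s}(e,e)$ as the trace of $T_{\s}$ against the $L^2(\partial_S)$-Gram operator and deduce $|c_{\s}|\,t'^{2|\s|}K_{\s}(e,e)\le M\,d_{\s}\,C_{\s}$. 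Here the factor $d_{\s}$ and the ratio between the $L^2(\partial_S)$ and Fischer norms on $\mathcal P_{\s}$ grow only polynomially in $|\s|$, by the known Faraut--Korányi formulas. Then
$$|c_{\s}|K_{\s}(te,te)=|c_{\s}|t^{2|\s|}K_{\s}(e,e)\le M\,\mathrm{poly}(|\s|)\,(t/t')^{2|\s|}.$$
The number of signatures with $|\s|=m$ is also polynomial in $m$, so the series converges. If the explicit constants prove messy, a fallback is to bound $|c_{\s}|K_{\s}(\boldsymbol z,\boldsymbol z)$ by $\sup|F_{m,m}|$ on the $t'$-ball via the projection $F_{m,m}\mapsto c_{\s}K_{\s}$. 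This projection is given by integrating against the character of $\mathcal P_{\s}$ over $\mathbb K$, and its norm is at most $d_{\s}^2$, which is again polynomial in $m$. Combining this with the Cauchy estimate $\sup|F_{m,m}|\le M(t/t')^{2m}$ gives the same conclusion.
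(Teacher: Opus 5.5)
Your argument is correct, provided the estimate for \eqref{sC} is carried out via your fallback (see the next paragraph). It follows a genuinely different route from the paper.

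The paper works with a single slice. After reducing by dilation to $F$ continuous up to the boundary, it expands $F(\cdot,e)$ in the Bergman space. It then uses two facts: this slice is invariant under the stabilizer $\mathbb L$ of the maximal tripotent $e$, and $K_{\s}(\cdot,e)$ spans the $\mathbb L$-invariants of $\mathcal P_{\s}$ (Faraut--Kor\'anyi). It propagates the expansion to all $\boldsymbol w$ by $\mathbb K$-invariance and the maximum principle on the Shilov boundary. Finally it obtains \eqref{sC} from absolute convergence and Proposition~\ref{pp1}(ii).

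You treat the two-variable function at once. The bihomogeneous Taylor series, the scalar circle inside $\mathbb K$ killing the off-diagonal parts, and Schur's lemma on the multiplicity-free space $\bigoplus_{|\s|=m}\mathcal P_{\s}$ give $F_{m,m}=\sum_{|\s|=m}c_{\s}K_{\s}$. Let $M_{t'}$ be the supremum of $|F|$ on $t'\overline\Omega\times t'\overline\Omega$. You combine the Cauchy estimate $\sup_{t\overline\Omega\times t\overline\Omega}|F_{m,m}|\le M_{t'}(t/t')^{2m}$ with the bound $d_{\s}^2$ on the sup-norm of the isotypic projection $f\mapsto d_{\s}\int_{\mathbb K}\overline{\chi_{\s}(k)}\,(k\cdot f)\,dk$, applied in $\boldsymbol z$ for fixed $\boldsymbol w$. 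This gives $|c_{\s}|K_{\s}(te,te)\le d_{\s}^2M_{t'}(t/t')^{2|\s|}$. The sum converges because $d_{\s}$ and the number of signatures of length $m$ grow polynomially in $m$.

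Your route needs only the multiplicity-free decomposition and elementary compact-group harmonic analysis: no Bergman space, no spherical-vector theorem, no maximum principle. It also gives explicit coefficient bounds and makes the uniqueness of the $c_{\s}$ evident. The paper's route is shorter given the results it cites.

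The justification in your first plan is wrong, however. The ratio of the $L^2(S)$ and Fischer norms on $\mathcal P_{\s}$ is not polynomial in $|\s|$. It equals the generalized Pochhammer symbol $(d/r)_{\s}$, which grows factorially: already on the disc, $\|z^n\|_F^2=n!$ while $\|z^n\|_{H^2}^2=1$. A constant $C_{\s}$ containing this factor would not be beaten by $(t/t')^{2|\s|}$.

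The factor actually cancels. Since $S=\mathbb K e$ is a single orbit, the reproducing kernel of $\mathcal P_{\s}$ in $L^2$ of the normalized $\mathbb K$-invariant measure on $S$ is a scalar multiple of $K_{\s}$, with value $d_{\s}$ at $(e,e)$. Hence $\|K_{\s}(\cdot,e)\|_{L^2(S)}^2=K_{\s}(e,e)^2/d_{\s}$. Bessel's inequality for $F(t'\cdot,t'e)$ then gives $|c_{\s}|\,t'^{2|\s|}K_{\s}(e,e)\le M_{t'}\sqrt{d_{\s}}$. Either this corrected computation or your character-projection fallback completes the proof.
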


\begin{proof} 
Assume first that $F$ extends continuously up to the boundary of $\Omega\times\Omega$, and fix a maximal tripotent $e$. The holomorphic function $F(\boldsymbol{z},e)=: F_e(\boldsymbol{z})$, being bounded, then belongs to the ordinary (unweighted) Bergman space on $\Omega$, for which $\{\psi^{\s}_{\alpha}/\sqrt{(p)_{\s}}\}_{\alpha,\s}$ ($p$~being the genus of $\Omega$) serves as an orthonormal basis. Thus the Fourier decomposition
$$ F_e = \sum_{\s} \sum_\alpha \frac{\langle F_e,\psi^{\s}_{\alpha}\rangle}{(p)_{\s}} \psi^{\s}_{\alpha} =: \sum_{\s} f_{\s}$$
converges in $L^2(\Omega)$ and, by the properties of the reproducing kernels \cite{Aro}, absolutely and uniformly on compact subsets of $\Omega$. If $\mathbb L$ denotes the stabilizer subgroup of $e$ in $\mathbb K$, then for any $l\in\mathbb L$ we have, by $\mathbb K$-invariance,
$F_e(l\boldsymbol z)=F(l\boldsymbol z,e)=F(\boldsymbol z,l^{-1}e)=F(\boldsymbol z,e)=F_e(\boldsymbol z)$; in other words, $F_e$  and, hence, each $f_{\s}$ are $\mathbb L$-invariant. However, the only $\mathbb L$-invariant element in $\mathcal P_{\s}$, up to constant multiples, is $K_{\s}(\cdot,e)$ \cite[Theorem~2.1]{FK}. Thus

$$ F(\boldsymbol z,e) = \sum_{\s} c_{\s} K_{\s}(\boldsymbol z,e)\,\, \forall \boldsymbol z\in\Omega,   $$
with uniform convergence on compact subsets. By $\mathbb K$-invariance of both $F$ and $K_{\s}$, this equality remains in force also for $F(\boldsymbol z,ke)=F(k^{-1}\boldsymbol z,e)$, uniformly for $\boldsymbol z$ in compact subsets of $\Omega$ and $k\in\mathbb K$. Since, for each fixed $\boldsymbol z$, the anti-holomorphic functions $F(\boldsymbol z, \boldsymbol w)$ and $K_{\s}(\boldsymbol z, \boldsymbol w)$ of $\boldsymbol w$ attain their maximum moduli on the Shilov boundary $\{ke:k\in\mathbb K\}$, it transpires by the same argument as in the proof of Proposition \ref{pp1} that \eqref{sD} holds, with convergence absolute and uniform for $\boldsymbol z, \boldsymbol w$ in compact subsets of $\Omega$. (In fact, $\boldsymbol w$ could even be taken in the closure of $\Omega$.) By part (ii) of Proposition \ref{pp1}, the claim follows.
 
For general $F$ (i.e.not necessarily continuous up to the boundary), apply the argument above to $F(t\boldsymbol{z},t\boldsymbol{w})$ for each $t\in(0,1)$.
\end{proof}

The last proposition applies, in particular, to $F=K$ with any $\mathbb K$-invariant kernel $K$, and to $F=1-1/K$ with any such $K$ which is in addition zero-free. If $K(\boldsymbol{z}, \boldsymbol{w}) = \sum_{\s} a_{\s} K_{\s}(\boldsymbol{z}, \boldsymbol{w})$ is a $\mathbb K$-invariant kernel on $\Omega \times \Omega$, then there exists a sequence of real numbers $\{b_{\s}\}$ with $b_{\underline 0} = 0$ such that $1 - \frac{1}{K} = \sum_{\s > \underline 0} b_{\s}K_{\s}$. 
%Rewriting it, we obtain $\frac{1}{K} = \sum_{\s} \hat{b}_{\s} K_{\s}$, where $\hat{b}_{\underline 0} = 1$ and $\hat{b}_{\s} = -b_{\s}$ for every $\s > \underline 0$. 
%Thus, we have
% \begin{equation}\label{eqn:b_s}
% \left(\sum a_{\s} K_{\s} \right) \left( \sum \hat{b}_{\s} K_{\s} \right) = 1. 
% \end{equation}
% Let $\s_0$ be a signature of length $n \geq 1$. Note that 
% \begin{equation}\label{exist:eqn1 b_s}
%   K_{\s}K_{\tilde{\s}} = \sum_{|\underline p| = |\s| + |\tilde{\s}|} c^{\underline p}_{\s, \tilde{\s}} K_{\underline p},  
% \end{equation}
% where the coefficients $c^{\underline p}_{\s, \tilde{\s}} \geq 0$ and $\s_{i}, \tilde{\s}_i \leq \underline p_i$ for every $1 \leq i \leq r$ (cf. \cite[Equation 2.9]{Arazy2}). Now, using Equation \eqref{exist:eqn1 b_s} and then comparing the coefficients of $K_{\s_0}$ from both sides of Equation \eqref{eqn:b_s}, we obtain
% $$\displaystyle \sum_{m=0}^n \sum_{|\s| = n - m} \sum_{|\s'| = m} a_{\s} \hat{b}_{\s'} c_{\s, \s'}^{\s_0} = 0,$$
% and therefore, 
% $$a_{\underline 0} \hat{b}_{\s_0} = - \displaystyle \sum_{m=0}^{n-1} \sum_{|\s| = n - m} \sum_{|\s'| = m} a_{\s} \hat{b}_{\s'} c_{\s, \s'}^{\s_0}.$$
% It follows from mathematical induction on $n$ that $\hat{b}_{\s_0}$ is obtained from $\hat{b}_{\s'}$, $|\s'| < n$.

\begin{corollary} \label{pp3}
Any $F$ as is the last proposition satisfies
$$ F(t\boldsymbol z, \boldsymbol w)=F(\boldsymbol z,t \boldsymbol w)\,\,\forall \boldsymbol z, \boldsymbol w\in\Omega $$
for any $0<t<1$. In~particular, for each $\boldsymbol w\in\Omega$, $F(\cdot, \boldsymbol w)$ extends to a function holomorphic in a neighborhood of the closure of~$\Omega$.
\end{corollary}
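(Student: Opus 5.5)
By Proposition \ref{pp2}, $F$ has an expansion $F=\sum_{\s} c_{\s}K_{\s}$ whose coefficients satisfy \eqref{sC}. By Proposition \ref{pp1}(ii), the series converges absolutely and locally uniformly on $\Omega\times\Omega$. The plan is to read off both claims from this expansion, using two elementary facts about the summands $K_{\s}$.

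\textbf{Homogeneity.} Each $K_{\s}(\boldsymbol z,\boldsymbol w)=\sum_\alpha \psi^{\s}_\alpha(\boldsymbol z)\overline{\psi^{\s}_\alpha(\boldsymbol w)}$ is a homogeneous polynomial of degree $|\s|=s_1+\cdots+s_r$ in $\boldsymbol z$ and in $\bar{\boldsymbol w}$, since $\mathcal P_{\s}$ consists of homogeneous polynomials of that degree. Hence
$$K_{\s}(t\boldsymbol z,\boldsymbol w)=t^{|\s|}K_{\s}(\boldsymbol z,\boldsymbol w)=K_{\s}(\boldsymbol z,t\boldsymbol w)$$
for real $t$. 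Summing term by term, which is legitimate because $t\boldsymbol z,t\boldsymbol w\in\Omega$ and the series converges absolutely there, gives $F(t\boldsymbol z,\boldsymbol w)=F(\boldsymbol z,t\boldsymbol w)$.

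\textbf{Extension.} Fix $\boldsymbol w\in\Omega$. Since $\Omega$ is the open unit ball of the Cartan factor in the spectral norm, $\boldsymbol w\in t\Omega$ for some $t<1$. Set $\boldsymbol w'=\boldsymbol w/t\in\Omega$. Then
$$F(\boldsymbol z,\boldsymbol w)=F(\boldsymbol z,t\boldsymbol w')=F(t\boldsymbol z,\boldsymbol w')$$
for all $\boldsymbol z\in\Omega$. The right-hand side $G(\boldsymbol z):=F(t\boldsymbol z,\boldsymbol w')$ is defined and holomorphic on $t^{-1}\Omega$, which is an open neighbourhood of $\overline\Omega$. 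It agrees with $F(\cdot,\boldsymbol w)$ on $\Omega$, so it supplies the required extension.

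\textbf{Expected obstacle.} The only point needing care is the term-by-term identity. It requires absolute convergence of the expansion at the points $(t\boldsymbol z,\boldsymbol w)$ and $(\boldsymbol z,t\boldsymbol w)$, and this is exactly what Proposition \ref{pp1}(ii) gives, since both points lie in $\Omega\times\Omega$. We also use the standard fact that $\Omega$ is circular and star-shaped, so that $t\Omega\subset\Omega$ and $\Omega=\bigcup_{t<1}t\Omega$.
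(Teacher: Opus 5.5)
Your proof is correct and matches the paper's argument. The paper's proof is only the homogeneity identity $K_{\s}(t\boldsymbol z,\boldsymbol w)=t^{|\s|}K_{\s}(\boldsymbol z,\boldsymbol w)=K_{\s}(\boldsymbol z,t\boldsymbol w)$ applied termwise to the expansion from Proposition \ref{pp2}, and your rescaling step $F(\cdot,\boldsymbol w)=F(t\,\cdot\,,\boldsymbol w/t)$ on $t^{-1}\Omega\supset\overline\Omega$ correctly supplies the ``in particular'' clause that the paper leaves implicit.
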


\begin{proof} 
For each $\s$ and any $0<t<1$, $K_{\s}(t\boldsymbol z, \boldsymbol w)=t^{|\s|}K_{\s}(\boldsymbol z, \boldsymbol w)=K_{\s}(\boldsymbol z,t \boldsymbol w)$.
\end{proof}

Now we have all the necessary tools to formulate a criterion for a non-vanishing $\mathbb K$-invariant kernel $K = \sum_{\s} a_{\s} K_{\s}$ on $\Omega$ to posses the CNP property in terms of the sequence $\{a_{\s}\}_{\s \geq \underline 0}$. We first observe that every non-vanishing $\mathbb K$-invariant kernel is irreducible. 

\begin{lemma}\label{irreducibility}
Let $K = \sum_{\s} a_{\s}K_{\s}$ be a non-vanishing $\mathbb K$-invariant kernel on $\Omega$. Then $K$ is irreducible.    
\end{lemma}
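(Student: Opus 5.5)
Irreducibility has two parts. Non-vanishing is given by hypothesis, so the whole task is to show that for distinct $\boldsymbol w_1 \neq \boldsymbol w_2$ in $\Omega$ the functions $K(\cdot,\boldsymbol w_1)$ and $K(\cdot,\boldsymbol w_2)$ are linearly independent. The plan is to argue by contradiction. Suppose $\lambda_1 K(\cdot,\boldsymbol w_1)+\lambda_2 K(\cdot,\boldsymbol w_2)=0$ with $(\lambda_1,\lambda_2)\neq(0,0)$. Since $K$ is non-vanishing, both $\lambda_i$ must be non-zero, so $K(\boldsymbol z,\boldsymbol w_1)=c\,K(\boldsymbol z,\boldsymbol w_2)$ for all $\boldsymbol z\in\Omega$ and some constant $c\neq 0$. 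Evaluating at $\boldsymbol z=\boldsymbol 0$ and using the normalization $K(\boldsymbol 0,\boldsymbol w)=a_{\underline 0}=1$ forces $c=1$. Hence $K(\cdot,\boldsymbol w_1)=K(\cdot,\boldsymbol w_2)$ as holomorphic functions on $\Omega$.

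Next I would expand both sides in the Peter--Weyl decomposition. By Proposition \ref{pp2}, $K(\boldsymbol z,\boldsymbol w)=\sum_{\s}a_{\s}K_{\s}(\boldsymbol z,\boldsymbol w)$, with convergence absolute and uniform on compacta. For fixed $\boldsymbol w$, the function $K_{\s}(\cdot,\boldsymbol w)$ lies in $\mathcal P_{\s}$, and the spaces $\mathcal P_{\s}$ are mutually orthogonal (for instance in the Fischer--Fock or Bergman inner product), so the decomposition of a holomorphic function into its $\mathcal P_{\s}$-components is unique. Concretely, one can group by homogeneous degree via Taylor expansion and then project onto the $\mathbb K$-isotypic components. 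This yields $a_{\s}K_{\s}(\cdot,\boldsymbol w_1)=a_{\s}K_{\s}(\cdot,\boldsymbol w_2)$ for every $\s$. Since every $a_{\s}>0$, we get $K_{\s}(\cdot,\boldsymbol w_1)=K_{\s}(\cdot,\boldsymbol w_2)$ for all $\s$.

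It suffices to use $\s=\epsilon_1=(1,0,\dots,0)$. The space $\mathcal P_{\epsilon_1}$ consists of the linear functionals on $Z$, and it is irreducible because $\mathbb K$ acts irreducibly on $Z$ for an irreducible Cartan domain. Its Fischer--Fock reproducing kernel is $K_{\epsilon_1}(\boldsymbol z,\boldsymbol w)=\langle \boldsymbol z,\boldsymbol w\rangle$, since the coordinates $z_1,\dots,z_d$ form an orthonormal basis of the linear polynomials in the Fischer--Fock inner product. Thus $\langle \boldsymbol z,\boldsymbol w_1-\boldsymbol w_2\rangle=0$ for all $\boldsymbol z\in\Omega$. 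Since $\Omega$ is open, this gives $\boldsymbol w_1=\boldsymbol w_2$, a contradiction.

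The main point needing care is the coefficient-extraction step: one has to justify that equality of the two series forces equality of each isotypic component. I would handle it by fixing $\boldsymbol w$ and noting that $K(t\boldsymbol z,\boldsymbol w)=\sum_{\s}t^{|\s|}a_{\s}K_{\s}(\boldsymbol z,\boldsymbol w)$, which separates homogeneous degrees. Within degree $1$ the only signature is $\epsilon_1$, so we only need the linear part of the Taylor expansion at $\boldsymbol 0$. That linear part is $a_{\epsilon_1}\langle\boldsymbol z,\boldsymbol w\rangle$, and comparing it for $\boldsymbol w_1$ and $\boldsymbol w_2$ avoids the general isotypic argument altogether.
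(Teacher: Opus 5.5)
Your proof is correct and follows essentially the same route as the paper. Both arguments evaluate at $\boldsymbol 0$ to reduce to $K(\cdot,\boldsymbol w_1)=K(\cdot,\boldsymbol w_2)$, then compare the degree-one ($\mathcal P_{\epsilon_1}$) parts, $a_{\epsilon_1}\langle \boldsymbol z,\boldsymbol w_1-\boldsymbol w_2\rangle=0$, which the paper extracts by differentiating at $\boldsymbol z=\boldsymbol 0$. You also reduce an arbitrary linear dependence to the form $K(\cdot,\boldsymbol w_1)=c\,K(\cdot,\boldsymbol w_2)$ explicitly, which the paper leaves implicit; your general isotypic-uniqueness discussion is unnecessary once you use the degree-one comparison.
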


\begin{proof}
Note that it is enough to prove $K(\cdot, \boldsymbol{w}_1)$ and $K(\cdot, \boldsymbol{w}_2)$ are linearly independent for any two distinct points $\boldsymbol{w}_1, \boldsymbol{w}_2 \in \Omega$. Let $\boldsymbol{w}_1$ and $\boldsymbol{w}_2$ be two distinct elements of $\Omega$. Suppose $\boldsymbol{w}_j(i)$, $j = 1, 2$, $1 \leq i \leq r,$ represents the $i$th component of the point $\boldsymbol{w}_j$. Assume that there exists a $c \in \mathbb C$ such that 
\begin{equation}\label{eqn:irr1}
K(\cdot, \boldsymbol{w}_1) = c K(\cdot, \boldsymbol{w}_2).    
\end{equation}
Evaluating both sides of Equation \eqref{eqn:irr1} at $\boldsymbol{0} \in \Omega$, we obtain $c = 1$. Substituting $c = 1$ and the expression of $K$ in Equation \eqref{eqn:irr1}, we get 
\begin{flalign}\label{eqn:irr2}
\nonumber 0 &= \displaystyle \sum_{\s \in \vec{\mathbb N}_0^r, |\s| \geq 1} a_{\s} (K_{\s}(\boldsymbol{z}, \boldsymbol{w_1}) - K_{\s}(\boldsymbol{z}, \boldsymbol{w_2}))\\
&=  a_{\epsilon_1} \langle \boldsymbol{z}, \boldsymbol{w}_1 - \boldsymbol{w_2}\rangle + \displaystyle \sum_{\s \in \vec{\mathbb N}_0^r, |\s| \geq 2} a_{\s} \sum_{\alpha = 1}^{d_{\s}} \psi_{\alpha}^{\s}(\boldsymbol{z}) (\psi_{\alpha}^{\s}(\boldsymbol{w_1}) - \psi_{\alpha}^{\s}(\boldsymbol{w_2})),\,\,\boldsymbol{z} \in \Omega.
\end{flalign}
Recall that $a_{\epsilon_1} \neq 0$. Thus, differentiating both sides of Equation \eqref{eqn:irr2} with respect to the $i$th variable and then evaluating at $\boldsymbol{z} = \boldsymbol{0}$, we conclude that $\boldsymbol{w}_{1}(i) = \boldsymbol{w}_2(i)$. This proves that $\boldsymbol{w}_1 = \boldsymbol{w}_2$. Hence, $K$ is irreducible. 
\end{proof}

Henceforth, we consider only non-vanishing $\mathbb K$-invariant kernels on $\Omega$. The following corollary is an immediate consequence of Proposition \ref{pp2} which provides a criterion of a $\mathbb K$-invariant kernel to have CNP property.

\begin{corollary}\label{k-inv-cnp}
A non-vanishing $\mathbb K$-invariant kernel $K: \Omega \times \Omega \to \mathbb C$ is CNP if and only if there exists a sequence of non-negative real numbers $\{b_{\underline{s}}\}_{\underline s > \underline 0}$ such that 
\begin{equation}\label{eqn:1-1/k}
 1 - \frac{1}{K(\boldsymbol{z}, \boldsymbol{w})} = \displaystyle \sum_{\underline{s} > \underline 0} b_{\underline{s}} K_{\underline{s}} (\boldsymbol{z}, \boldsymbol{w})   
\end{equation}
holds for every $\boldsymbol{z}, \boldsymbol{w} \in \Omega$.
\end{corollary}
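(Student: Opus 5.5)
We combine three ingredients: the irreducibility of $K$ (Lemma \ref{irreducibility}), the standard characterization of irreducible CNP kernels (an irreducible kernel $K$ is CNP if and only if $1-\frac{1}{K}$ is non-negative definite, cf. \cite[Theorem 7.28]{Agler}), and the expansion of $\mathbb K$-invariant sesqui-holomorphic functions (Proposition \ref{pp2}). The only real work is to show that non-negative definiteness of a $\mathbb K$-invariant function is equivalent to non-negativity of its coefficients in the $K_{\s}$-expansion.

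The first step is to set up the expansion. Since $K$ is non-vanishing, the function $F := 1 - \frac{1}{K}$ is well defined on $\Omega\times\Omega$. It is holomorphic in $(\boldsymbol z,\bar{\boldsymbol w})$, and it is $\mathbb K$-invariant because $K$ is. Proposition \ref{pp2} therefore gives
$$F = \sum_{\s} b_{\s} K_{\s},$$
converging absolutely and locally uniformly. Each $b_{\s}$ is real, because $F(\boldsymbol w,\boldsymbol z) = \overline{F(\boldsymbol z,\boldsymbol w)}$ and the expansion is unique. To see uniqueness, the $K_{\s}$ are homogeneous of distinct bidegrees in different isotypic components, so the coefficients are determined by the Fischer inner products. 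Finally, $b_{\underline 0} = F(\boldsymbol z,\boldsymbol 0) = 1 - 1/a_{\underline 0} = 0$.

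For sufficiency, suppose every $b_{\s}$ is non-negative. Each $K_{\s}$ is a positive kernel by \eqref{ortho}, and non-negative combinations of positive kernels are positive. The series converges pointwise, and pointwise limits of positive kernels are positive, so $F$ is non-negative definite. By Lemma \ref{irreducibility} and \cite[Theorem 7.28]{Agler}, $K$ is CNP.

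For necessity, suppose $K$ is CNP. By the same theorem, $F$ is a positive kernel. To read off the sign of $b_{\s}$, we recover it from positivity of $F$. Positivity gives a Kolmogorov factorization $F(\boldsymbol z,\boldsymbol w) = \langle f(\boldsymbol z), f(\boldsymbol w)\rangle$; alternatively, one can work with finite positive combinations directly. The cleaner route is an averaging argument.

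1. For a fixed $\boldsymbol p\in\mathcal P_{\s}$ and small $t$, consider the quantity $\iint F(t\boldsymbol z,t\boldsymbol w)\overline{\boldsymbol p(\boldsymbol z)}\boldsymbol p(\boldsymbol w)\,d\mu(\boldsymbol z)\,d\mu(\boldsymbol w)$, where $\mu$ is a $\mathbb K$-invariant probability measure, for instance normalized Lebesgue measure on a small ball.
2. This double integral is a limit of quadratic forms $\sum_{i,j} c_i\bar c_j F(\boldsymbol z_i,\boldsymbol z_j)$ with Riemann-sum weights, so it is non-negative.
3. Insert the expansion. The Peter--Weyl components are mutually orthogonal in $L^2(\mu)$, since $\mu$ is $\mathbb K$-invariant and the $\mathcal P_{\s}$ are mutually inequivalent. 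Only the $\s$ term survives, giving
$$b_{\s}\, t^{2|\s|}\,\sum_{\alpha}\big|\langle \boldsymbol p,\psi^{\s}_\alpha\rangle_{L^2(\mu)}\big|^2 \ge 0.$$
4. Choosing $\boldsymbol p=\psi^{\s}_1$ makes the sum strictly positive, which forces $b_{\s}\ge 0$.

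The main obstacle is exactly this orthogonality step. One must justify interchanging the double integral with the series, which follows from the locally uniform convergence in Proposition \ref{pp2}. One must also check that $\int K_{\s'}(\boldsymbol z,\boldsymbol w)\overline{\boldsymbol p(\boldsymbol z)}\,d\mu(\boldsymbol z)$ vanishes for $\s'\ne\s$. This holds by Schur orthogonality, because $\boldsymbol z\mapsto K_{\s'}(\boldsymbol z,\boldsymbol w)$ lies in $\mathcal P_{\s'}$ and the restriction map sends inequivalent $\mathbb K$-modules to orthogonal subspaces of $L^2(\mu)$.
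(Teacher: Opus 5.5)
Your proposal is correct and follows the paper's proof. Like the paper, you use Proposition~\ref{pp2} to get the expansion of $1-\frac{1}{K}$. You then use Lemma~\ref{irreducibility} together with \cite[Theorem 7.28]{Agler} to reduce CNP to non-negative definiteness of $1-\frac{1}{K}$. The last ingredient is the equivalence ``$\sum_{\s>\underline 0} b_{\s}K_{\s}$ is non-negative definite if and only if every $b_{\s}\geq 0$.'' The only difference is that the paper cites \cite{FK} for this equivalence, while you prove its nontrivial direction directly. Your averaging against a $\mathbb K$-invariant measure, with Schur orthogonality of the inequivalent $\mathcal P_{\s}$ in $L^2(\mu)$, is a valid self-contained argument.
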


\begin{proof}
If $K$ is a non-vanishing $\mathbb K$-invariant kernel, then the existence of a sequence of real numbers $\{b_{\s}\}_{\s > \underline 0}$ follows from Proposition \ref{pp2}. Note that $K$ is irreducible, thanks to Lemma \ref{irreducibility}. Due to \cite[Theorem 7.28]{Agler}, it follows that $K$ is a CNP kernel if and only if $1 - \frac{1}{K}$ is non-negative definite. Also, note that $\sum_{\underline{s} > 0} b_{\underline{s}} K_{\underline{s}}$ is a non-negative definite kernel if and only if each $b_{\s} \geq 0$ (cf. \cite{FK}). 
\end{proof}

\begin{remark} \label{remME2}

The last corollary even holds for general nonvanishing $\mathbb K$-invariant functions~$K$, i.e. it is not necessary to assume that $K(\boldsymbol z,\boldsymbol w)$ is a nonnegative-definite kernel ---the nonnegative definiteness follows automatically. Indeed, if $1-1/K=:L$ is nonnegative-definite, then
$$ K = \frac1{1-L} = 1 + L + L^2 + L^3 + \dots $$
must also be nonnegative-definite: any power $L^k$, $k=0,1,2,\dots$, is~a nonnegative definite kernel by \cite[Section~I.8]{Aro}, hence also any finite sum of these powers \cite[Section~I.6]{Aro}, and, finally, their increasing limit \cite[Section~I.9.B]{Aro}. (For the last, observe that $K$ being nonvanishing implies that $0\leq L(\boldsymbol z,\boldsymbol z)<1$ for all $\boldsymbol z\in\Omega$.)
\end{remark}

Note that a non-vanishing unitary invariant kernel $K_{\mathcal{U}}(\boldsymbol{z}, \boldsymbol{w}) = \sum_{n\geq 0} a_n \left\langle \boldsymbol{z}, \boldsymbol{w}\right\rangle^n$
on the unit ball $\mathbb B_d$ has CNP property if 
$$\frac{a_{n+1}}{a_{n}} \geq \frac{a_{n}}{a_{n-1}}$$
holds for every $n \geq 1$. This is the classical Kaluza lemma \cite[Lemma 7.38]{Agler}. In the following theorem, we generalize Kaluza lemma providing a criteria for a $\mathbb K$-invariant kernel $K = \sum_{\s \geq \underline 0} a_{\s}K_{\s}$ on $\Omega$ to have the CNP property in terms of the sequence $\{a_{\s}\}_{\s \geq \underline 0}$.

\begin{theorem}[Generalized Kaluza lemma]\label{genKaluza}
Let $K = \sum a_{\s} K_{\s}$ be a $\mathbb K$-invariant kernel. Then $K$ is a CNP kernel if for each signature $\s_0$ of length $k \geq 1$, the following holds
\begin{equation}\label{eqn:Kaluza}
\displaystyle \sum_{\tilde{\s}_0 \in I(\s_0)} \sum_{|\underline{\tilde{p}}| = k - 1 - |\underline{q}|} \frac{a_{\underline{\tilde{p}}}}{a_{\tilde{\s}_0}} c^{\tilde{\s}_0}_{\underline{\tilde{p}}, \underline{q}} \geq |I(\s_0)| \displaystyle \sum_{|\underline p| = k - |\underline q|} \frac{a_{\underline p}}{a_{\s_0}} c_{\underline p, \underline q}^{\s_0},    
\end{equation}
where $\underline q$ is any signature with $1 \leq | \underline q| \leq k-1$,  $I(\s_0) = \{\s_0 - \epsilon_i : \s_0 - \epsilon_i\,\,\mbox{is a signature}\}$ and $|I(\s_0)|$ is the cardinality of the set $I(\s_0)$.
\end{theorem}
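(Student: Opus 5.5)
By Corollary \ref{k-inv-cnp}, it is enough to show that the coefficients $b_{\s}$ in $1-\frac1K=\sum_{\s>\underline 0} b_{\s}K_{\s}$ are all non-negative. The plan mimics the proof of the classical Kaluza lemma. There one writes $f\cdot(1-\sum q_nz^n)=1$ and then proves $q_n\ge 0$ by induction, using the log-convexity of the coefficients.

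\textbf{Step 1: a recursion for $b_{\s}$.} Write $\frac1K=\sum_{\s}\hat b_{\s}K_{\s}$ with $\hat b_{\underline 0}=1$ and $\hat b_{\s}=-b_{\s}$ for $\s>\underline 0$. We need the product formula
$$K_{\underline p}K_{\underline q}=\sum_{|\s|=|\underline p|+|\underline q|}c^{\s}_{\underline p,\underline q}K_{\s},$$
with $c^{\s}_{\underline p,\underline q}\ge 0$ and $c^{\s}_{\underline p,\underline q}=0$ unless $\underline p,\underline q\le\s$. This is the coefficient structure appearing in \eqref{eqn:Kaluza}. Expanding $K\cdot(1-\sum b_{\s}K_{\s})=1$ and comparing the coefficient of $K_{\s_0}$, with $|\s_0|=k\ge1$ and $a_{\underline 0}=1$, gives
\begin{equation*}
b_{\s_0}=a_{\s_0}-\sum_{1\le|\underline q|\le k-1}\ \sum_{|\underline p|=k-|\underline q|} a_{\underline p}\,b_{\underline q}\,c^{\s_0}_{\underline p,\underline q}.
\end{equation*}
This expresses $b_{\s_0}$ through the $b_{\underline q}$ with $|\underline q|<k$.

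\textbf{Step 2: rewrite using a lower level.} Apply the same identity to each $\tilde{\s}_0\in I(\s_0)$, which has length $k-1$:
$$a_{\tilde{\s}_0}=b_{\tilde{\s}_0}+\sum_{1\le|\underline q|\le k-2}\sum_{|\tilde{\underline p}|=k-1-|\underline q|}a_{\tilde{\underline p}}\,b_{\underline q}\,c^{\tilde{\s}_0}_{\tilde{\underline p},\underline q}.$$
Divide this by $a_{\tilde{\s}_0}$, sum over $I(\s_0)$, and divide by $|I(\s_0)|$. The result expresses $1$ as an average. Multiply the Step 1 identity by this expression in the form $\frac{a_{\s_0}}{|I(\s_0)|}\sum_{\tilde{\s}_0}\frac{(\cdots)}{a_{\tilde{\s}_0}}$. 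Then group terms according to $b_{\underline q}$. This gives $\frac{b_{\s_0}}{a_{\s_0}}$ as a sum of terms of the form
$$b_{\underline q}\Big[\tfrac{1}{|I(\s_0)|}\sum_{\tilde{\s}_0}\sum_{\tilde{\underline p}}\tfrac{a_{\tilde{\underline p}}}{a_{\tilde{\s}_0}}c^{\tilde{\s}_0}_{\tilde{\underline p},\underline q}-\sum_{\underline p}\tfrac{a_{\underline p}}{a_{\s_0}}c^{\s_0}_{\underline p,\underline q}\Big],$$
plus the terms $b_{\tilde{\s}_0}$ with non-negative weights.

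\textbf{Step 3: induction.} Hypothesis \eqref{eqn:Kaluza} says exactly that each bracket is non-negative. For the base case, $b_{\epsilon_1}=a_{\epsilon_1}>0$. Assuming $b_{\underline q}\ge 0$ for all $|\underline q|<k$, every term in Step 2 is non-negative, so $b_{\s_0}\ge0$.

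\textbf{Main obstacle.} The delicate point is the bookkeeping in Step 2. The top terms with $|\underline q|=k-1$ pair against $a_{\underline p}$ with $|\underline p|=1$, that is $\underline p=\epsilon_1$, and these must be matched with the $b_{\tilde{\s}_0}$ terms. The range $1\le|\underline q|\le k-1$ in the hypothesis must be reconciled with the range $\le k-2$ in the lower-level identities. I would handle this by treating the $|\underline q|=k-1$ terms separately: there, the empty inner sum at level $k-1$ is interpreted via the identity itself, and I would check that the leftover coefficients have the right sign. One must also check that the convergent expansions justify comparing coefficients; Proposition \ref{pp2} and the grading by $|\s|$ do this, since each $|\s|$-homogeneous piece is a finite sum.
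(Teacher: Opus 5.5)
Your proposal is correct and is the paper's proof. You use the same coefficient recursion from $K\cdot\frac1K=1$, the same subtraction of the level-$(k-1)$ identities for $\tilde{\s}_0\in I(\s_0)$ weighted by $\frac{a_{\s_0}}{|I(\s_0)|\,a_{\tilde{\s}_0}}$, and induction on $|\s_0|$; you only work with $b_{\s}=-\hat b_{\s}$ and divide by $a_{\s_0}$.

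The bookkeeping issue you flag is resolved as follows. For $|\underline q|=k-1$ the left side of \eqref{eqn:Kaluza} is not empty: it is the single term $\tilde{\underline p}=\underline 0$, with $c^{\tilde{\s}_0}_{\underline 0,\underline q}=\delta_{\underline q,\tilde{\s}_0}$. So the $b_{\tilde{\s}_0}$ terms belong to the bracket with $|\underline q|=k-1$, as in the paper's \eqref{eqn4:Kaluza}, which sums over $0<|\underline q|\le k-1$. Their net weight $\frac{1}{|I(\s_0)|\,a_{\tilde{\s}_0}}-\frac{a_{\epsilon_1}}{a_{\s_0}}c^{\s_0}_{\epsilon_1,\tilde{\s}_0}$ is non-negative because of the hypothesis, not automatically as your Step 2 claims.
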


\begin{proof}
Due to \cite[Theorem 7.28]{Agler}, $K$ is a CNP kernel if and only if $1 - \frac{1}{K}$ is non-negative definite. It follows from Proposition \ref{pp2} that there exists a sequence of real numbers $\{\hat{b}_{\s}\}_{\s \geq \underline 0}$ with $\hat{b}_{\underline 0} = 1$ such that $\frac{1}{K} = \sum \hat{b}_{\s}K_{\s}$. This implies that 
$$1 - \frac{1}{K} = - \displaystyle \sum_{\s > \underline 0} \hat{b}_{\s}K_{\s}.$$
Thus, $1 - \frac{1}{K}$ is non-negative definite if each $\hat{b}_{\s},\,\,\s > \underline 0,$ is a non-positive real number. Consequently, existence of a sequence of non-positive real numbers $\{\hat{b}_{\s}\}_{\s > \underline 0}$ such that $\frac{1}{K} = 1 + \sum_{\s>\underline 0} \hat{b}_{\s} K_{\s}$ is equivalent to the fact that $K$ is a CNP kernel. Therefore, to prove the theorem, we have to prove that the existence of a sequence $\{\hat{b}_{\s}\}_{\s \geq \underline 0}$ with $\hat{b}_{\underline 0} = 1$ and $\hat{b}_{\s} \leq 0,\,\,\s > \underline 0$ such that the equation
\begin{equation}\label{eqn1:Kaluza}
\left( \displaystyle \sum_{\s \geq \underline 0} a_{\s}K_{\s}\right) \left( \displaystyle \sum_{\s \geq \underline 0} \hat{b}_{\s}K_{\s} \right) = 1    
\end{equation}
holds. Comparing the coefficient of $K_{\epsilon_1}$ from both sides of Equation \eqref{eqn1:Kaluza}, we obtain 
\begin{equation}\label{eqnx:sec1:kalu}
 \hat{b}_{\epsilon_1} = -a_{\epsilon_1}.   
\end{equation}

%Assume that if Equation \eqref{eqn:Kaluza} holds for every signature $\title{\s}$ of leght less than $k$, then, comparing coefficient of $K_{\tilde{\s}}$ from both sides of Equation \eqref{eqn1:Kaluza}, we obtain $b_{\tilde{\s}} \leq 0$.
For any two signatures $\s$ and $\tilde{\s}$, note that 
\begin{equation}\label{exist:eqn1 b_s}
  K_{\s}K_{\tilde{\s}} = \sum_{|\underline p| = |\s| + |\tilde{\s}|} c^{\underline p}_{\s, \tilde{\s}} K_{\underline p},  
\end{equation}
 where the coefficients $c^{\underline p}_{\s, \tilde{\s}} \geq 0$, $\underline p \geq \s$ and $\underline p \geq \tilde{\s}$ (cf. \cite[Equation 2.9]{Arazy2}).

Let $k \geq 1$. Suppose $\s_0$ is a signature of length $k$. Comparing the coefficients of $K_{\s_0}$ from both sides of Equation \eqref{eqn1:Kaluza}, we get
\begin{equation}\label{eqn2:Kaluza}
\hat{b}_{\s_0}a_{\underline 0} + \displaystyle \sum_{0 < |\underline q| < k}  \left( \sum_{|\underline p| = k - |\underline q|} a_p c^{\s_0}_{\underline p, \underline q} \right) \hat{b}_{\underline q} + \hat{b}_{\underline 0} a_{\s_0} = 0.  
\end{equation}
Note that $|\tilde{\s}_0| = k - 1$ for every $\tilde{\s}_0 \in I(\s_0)$. Therefore, for each $\tilde{\s}_0 \in I(\s_0)$, comparing the coefficient of $K_{\tilde{\s}_0}$ from both sides of Equation \eqref{eqn1:Kaluza}, we obtain
\begin{equation}\label{eqn3:Kaluza}
\hat{b}_{\tilde{\s}_0}a_{\underline 0} + \displaystyle \sum_{0 < |\underline q| < k-1}  \left( \sum_{|\underline{\tilde{p}}| = k - 1 - |\underline{q}|} a_{\underline{\tilde{p}}} c^{\tilde{\s}_0}_{\underline{\tilde{p}}, \underline{q}} \right) \hat{b}_{\underline q} + \hat{b}_{\underline 0} a_{\tilde{\s}_0} = 0.  
\end{equation}
Multiplying Equation \eqref{eqn3:Kaluza} by $\frac{a_{\s_0}}{|I(\s_0)| a_{\tilde{\s}_0}}$, adding them up for all $\tilde{\s}_0 \in I(\s_0)$ and then subtracting it from Equation \eqref{eqn2:Kaluza}, we obtain
\begin{equation}\label{eqn4:Kaluza}
\hat{b}_{\s_0}a_{\underline 0} = \displaystyle \sum_{0 < |\underline q| \leq k - 1} \left[ \sum_{\tilde{\s}_0 \in I(\s_0)} \left(\sum_{|\underline{\tilde{p}}| = k - 1 - |\underline{q}|} \frac{a_{\underline{\tilde{p}}} a_{\s_0}}{|I(\s_0)| a_{\tilde{\s}_0}} c^{\tilde{\s}_0}_{\underline{\tilde{p}}, \underline{q}} \right) - \sum_{|\underline{p}| = k - |\underline{q}|} a_{\underline{p}} c^{\s_0}_{\underline{p}, \underline{q}} \right] \hat{b}_{\underline q}.   
\end{equation}
% Note that for every signatures $\underline{\tilde{p}}$ and $\underline q$, there exists $\tilde{\s}_0 \in I(\s_0)$ such that $C^{\tilde{\s}_0}_{\underline{\tilde{p}}, \underline q} > 0.$ Thus, we have 
% $$\sum_{\tilde{\s}_0 \in I(\s_0)} \left(\sum_{|\underline{\tilde{p}}| = k - 1 - |\underline{q}|} \frac{a_{\underline{\tilde{p}}} a_{\s_0}}{|I(\s_0)| a_{\tilde{\s}_0}} c^{\tilde{\s}_0}_{\underline{\tilde{p}}, \underline{q}} \right) > 0$$
% and hence, none of the coefficients of $\hat{b}_{\underline q}$ in Equation \eqref{eqn4:Kaluza} is automatically negative. 
Therefore, it follows from Equation \eqref{eqn4:Kaluza} that $\hat{b}_{\s_0} \leq 0$ if Equation \eqref{eqn:Kaluza} holds for every signatures $\underline q$ with $1 \leq |\underline q| \leq k-1$.
\end{proof}

\begin{remark}
%In particular, if we consider the rank of the domain $\Omega$ to be $1$, that is, $\Omega$ is the unit ball $\mathbb B_d$, and $\mathbb K$ to be $\mathcal{U}(d)$, then we retrieve the Kaluza's lemma (cf. \cite[Lemma 7.38]{Agler}).    
Denoting momentarily
$$ A^{\underline s}_{\underline q} := \sum_{\underline p} \frac{a_{\underline p}}{a_{\underline s}} c^{\underline s}_{\underline p,\underline q}  $$
(note that $c^{\underline s}_{\underline p,\underline q}$ is nonzero only for $|\underline s|=|\underline p|+|\underline q|$, so~the last sum is automatically finite), Equation \eqref{eqn:Kaluza} can be rephrased~as
$$\frac{1}{I(\s_0)} \displaystyle \sum_{\tilde{\s}_0 \in I(\s_0)} A^{\underline s_0}_{\underline q} \geq A_{\underline q}^{\s_0},\,\\forall \,\,|\underline q| \geq 1.$$
That~is, $A^{\underline s_0}_{\underline q}$ has to be less than or equal to the average of the same quantities over $I(\underline s_0)$.

For $\Omega$ the ball, $I(\underline s_0)$ becomes a singleton, $A^{\underline s}_{\underline q}$ becomes just $a_{s-q}/a_s$, and the connection with the ordinary Kaluza Lemma becomes evident.
\end{remark}

\subsection{Examples}
The space of all the square integrable holomorphic functions on $\Omega$ $\mathbb A^{2}(\Omega)$ with respect to the Lebesgue measure $dm(z)$ is known as the Bergman space over $\Omega$. It is a reproducing kernel Hilbert space. The reproducing kernel of $\mathbb A^{2}(\Omega)$, also known as the Bergman kernel, is $B(\boldsymbol z, \boldsymbol w)=\Delta (\boldsymbol{z}, \boldsymbol{w})^{-p},$ where $p=2+a(r-1)+b,$ is called the {\it genus} of the domain $\Omega$ (see \cite[Theorem 2.9.8 ]{upmeier}). 
Here $\Delta(\boldsymbol{z}, \boldsymbol{w})$ is a $\mathbb K$-invariant sesqui-analytic polynomial on $\Omega \times \Omega,$ 
%which is uniquely determined by the property $\Delta(\boldsymbol{z}, \boldsymbol{z})=\prod_{i=1}^r(1-t^2_i).$ The function $\Delta(\boldsymbol{z}, \boldsymbol{w})$ is 
 known as the {\it Jordan triple determinant}. 

Also, for $\nu \in \mathcal{W}_{\Omega} := \left \{0, \ldots, \frac{a}{2}(r-1)\right \}\cup \left( \frac{a}{2}(r-1), \infty\right)$, the sesqui-analytic function $K^{(\nu)}:\Omega \times \Omega \to \mathbb C$ defined by
\beq \label{FK formula} K^{(\nu)} (\boldsymbol z, \boldsymbol w):= \Delta(\boldsymbol z, \boldsymbol w)^{-\nu}= \sum_{\s} (\nu)_{\s}K_{\s}(\boldsymbol z, \boldsymbol w),\;\; \boldsymbol z, \boldsymbol w \in \Omega,\eeq is non negative definite  (see \cite[Corollary 5.2]{FK} ). Here, $(\nu)_{\s}$ denotes the {\it generalized Pochhammer symbol}
\[(\nu)_{\s}:= \prod_{j=1}^{r} \left( \nu-\frac{a}{2}(j-1)\right)_{s_{j}}= \prod_{j=1}^{r}\prod_{l=1}^{s_{j}} \left(\nu-\frac{a}{2}(j-1)+l-1\right).\]
Therefore, by the Moore-Aronszajn Theorem, $K^{(\nu)}$ determines a Hilbert space $\mathbb{A}^{(\nu)}(\Omega)$, known as the weighted Bergman space. If $\nu= \frac{d}{r}$ and $\nu=\frac{a}{2}(r-1)+\frac{d}{r} + 1,$ then the weighted Bergman spaces $\mathbb A^{(\nu)}(\Omega)$ coincide with the Hardy space over the Shilov boundary of $\Omega$ and the classical Bergman space, respectively.

\begin{proposition}\label{BermanNotCNP}
Let $\Omega$ be a Cartan domain of rank $r > 1$. For any $\nu \in \mathcal{W}_{\Omega}$, the weighted Bergman kernel $K^{(\nu)}(\boldsymbol z, \boldsymbol w):= \Delta(\boldsymbol{z}, \boldsymbol{w})^{-\nu}$ is a CNP kernel if and only if $\nu = 0$.    
\end{proposition}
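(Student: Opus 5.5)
The case $\nu = 0$ is immediate: $K^{(0)}\equiv 1$, so $1 - 1/K^{(0)} = 0$ is trivially non-negative definite. (Strictly speaking $a_{\s}=0$ for $\s>\underline 0$ here, but the criterion of \cite[Theorem 7.28]{Agler} still applies, or one takes CNP for the constant kernel as trivial.) For the converse, take $\nu \in \mathcal{W}_\Omega\setminus\{0\}$. Since $\Delta$ is a $\mathbb K$-invariant sesqui-analytic polynomial with $\Delta(\boldsymbol z,\boldsymbol w)\neq 0$ on $\Omega\times\Omega$, the kernel $K^{(\nu)}$ is non-vanishing. By Corollary \ref{k-inv-cnp}, $K^{(\nu)}$ is CNP if and only if
$$1 - \frac{1}{K^{(\nu)}} = 1 - \Delta(\boldsymbol z,\boldsymbol w)^{\nu} = \sum_{\s>\underline 0} b_{\s}K_{\s}$$
with every $b_{\s}\geq 0$. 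The plan is therefore to exhibit a single signature $\s$ with $b_{\s}<0$.

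The natural tool is the Faraut--Korányi expansion \eqref{FK formula} applied with $-\nu$ in place of $\nu$. Both sides are holomorphic in the parameter, so the identity continues analytically:
$$\Delta(\boldsymbol z,\boldsymbol w)^{\nu} = \sum_{\s} (-\nu)_{\s} K_{\s}(\boldsymbol z,\boldsymbol w).$$
By uniqueness of the coefficients in the expansion of Proposition \ref{pp2} (the $K_{\s}$ are linearly independent, being reproducing kernels of mutually orthogonal $\mathbb K$-types), this gives $b_{\s} = -(-\nu)_{\s}$ for $\s>\underline 0$.

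Now test the signature $\s=(1,1,0,\dots,0)$, which exists because $r>1$:
$$(-\nu)_{(1,1)} = (-\nu)\Bigl(-\nu-\frac a2\Bigr) = \nu\Bigl(\nu+\frac a2\Bigr) > 0 \quad\text{for } \nu>0.$$
Hence $b_{(1,1,0,\dots,0)} = -\nu(\nu+\frac a2)<0$, so $1-1/K^{(\nu)}$ is not non-negative definite and $K^{(\nu)}$ is not CNP. For comparison, $b_{\epsilon_1} = \nu \geq 0$, so the obstruction genuinely comes from rank $\geq 2$; this is exactly why the hypothesis $r>1$ is needed, since on the ball every $0<\nu\le 1$ gives a CNP kernel.

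The main obstacle is justifying the expansion of $\Delta^{\nu}$ for non-integer or negative exponent. I would argue as follows. For fixed $\boldsymbol z,\boldsymbol w$ with, say, $\boldsymbol w = t e$ small, both sides are entire in $\nu$: the left side because $\Delta$ is nonvanishing near $0$, and the right side because $|(\nu)_{\s}|$ grows at most like $(|\nu|+C)_{\s}$, which gives locally uniform convergence in $\nu$ for small $\boldsymbol z,\boldsymbol w$ via Proposition \ref{pp1}. The identity holds for all $\nu>\frac a2(r-1)$, so it extends to every $\nu\in\mathbb C$ near $0$. It then extends to all of $\Omega\times\Omega$ by uniqueness of the expansion in Proposition \ref{pp2}.

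Alternatively, and more elementarily, the coefficient of $K_{(1,1)}$ can be computed directly by matching the degree-$2$ parts of $(\sum a_{\s}K_{\s})(\sum\hat b_{\s}K_{\s})=1$. This uses $a_{\s}=(\nu)_{\s}$ and the product rule $K_{\epsilon_1}^2 = c\,K_{(2)} + c'\,K_{(1,1)}$ with $c'>0$. This second route avoids the analytic continuation, but it needs the constant $c'$; the identity $\Delta^{\nu}\Delta^{-\nu}=1$ at the level of Pochhammer symbols already encodes it.
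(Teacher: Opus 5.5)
Your proposal is correct and follows essentially the paper's argument. The paper likewise reads off $b_{\s}=-(-\nu)_{\s}$ from the Faraut--Kor\'anyi formula and tests $\s=(1,1,0,\dots,0)$ to get the condition $\nu(\nu+\frac a2)\le 0$; it also uses $\s=\epsilon_1$ to get $\nu\ge 0$, which, as you note, is automatic for $\nu\in\mathcal W_\Omega$. Your analytic-continuation justification of the expansion of $\Delta^{\nu}$ and your separate treatment of the constant kernel at $\nu=0$ (which is not irreducible) make explicit points that the paper passes over.
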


\begin{proof} By~the criterion recalled above, $\Delta(\mathbf z,\mathbf w)^{-\nu}$ being a CNP kernel is equivalent to $1-\Delta(\mathbf z,\mathbf w)^\nu$ being a positive semi-definite kernel. By~the Faraut-Koranyi binomial formula \eqref{FK formula}, this in turn is equivalent to

$$ (-\nu)_{\underline s} \le0 \qquad \forall |\underline s|\ge1.  $$

For $\underline s=(1,0,\ldots, 0)$ and $\underline s=(1,1, 0,\ldots,0)$, this becomes

$$ -\nu\le0 \quad \text{and} \quad (-\nu)(-\nu-\tfrac a2)\le0 ,  $$
respectively. The first means that $\nu\ge0$, while the second means that $-\frac a2\le\nu\le0$ (note that the characteristic multiplicity $a$ is always a nonnegative integer). Hence $\nu=0$ is the only possible solution.
\end{proof}

Hence, it is natural to ask whether there exists any $\mathbb K$-invariant CNP kernel on $\Omega$. Indeed, there exist $\mathbb K$-invariant CNP kernels. In the following, we provide examples of such kernels.

% (i) For $0 < c < \frac{1}{r}$, let $K_c : \Omega \times \Omega \to \mathbb C$ be defined by 
% $$K_c(\boldsymbol{z}, \boldsymbol{w}) = \frac{1}{1 - c \left\langle \boldsymbol{z}, \boldsymbol{w}\right\rangle}, \boldsymbol{z}, \boldsymbol{w} \in \Omega.$$ 
% %The multiplication operators by the coordinate functions are bounded on the Hilbert space $\mathcal{H}_c$ with the reproducing kernel $K_c$. 
% Due to \cite[Theorem 7.28]{Agler}, $K_c$ is a CNP kernel for each $0 < c < \frac{1}{r}$.
Let $\{\hat{b}_{\s}\}_{\s\geq \underline 0}$ be a sequence of real numbers with $\hat{b}_{\underline 0} = 1$ and $\hat{b}_{\s} \leq 0$ for all $\s > \underline 0$. Also, assume that the series $\tilde{L} = \sum_{\s \geq \underline 0} \hat{b}_{\s} K_{\s}$ is convergent uniformly on compact subsets of $\mathbb B_R \times \mathbb B_R$ and non-zero everywhere on $\mathbb B_R \times \mathbb B_R$ where $\mathbb B_R$ is the open ball centered at $0$, radius $R>0$ in $\mathbb C^d$. For $c = \frac{r}{R}$, the function $L(\boldsymbol{z}, \boldsymbol{w}) = \tilde{L}(c \boldsymbol{z}, c \boldsymbol{w})$, $\boldsymbol{z}, \boldsymbol{w} \in \mathbb B_R$, defines a function which is holomorphic in $\boldsymbol{z}$ and anti-holomorphic in $\boldsymbol{w}$ on $\mathbb B_r$. Also, note that $L$ is non-zero everywhere on $\mathbb B_r$. Recall from Equation \eqref{ortho} that for each signature $\s$, 
$$K_{\s} = \displaystyle \sum_{\alpha=1}^{d_{\s}} \psi_{\alpha}^{\s} \overline{\psi_{\alpha}^{\s}},$$ 
where every $\psi_{\alpha}^{\s}$ is a homogeneous polynomial of degree $|\s|.$ Hence, it follows that 
$$L(\boldsymbol{z}, \boldsymbol{w}) = \displaystyle \sum_{\s \geq \underline 0} \hat{b}_{\s}c^{2|\s|} K_{\s} (\boldsymbol{z}, \boldsymbol{w}),\,\,\boldsymbol{z}, \boldsymbol{w} \in \mathbb B_r.$$
Using mathematical induction and Equation \eqref{eqn2:Kaluza}, we obtain a sequence of positive real numbers $\{a_{\s}\}_{\s \geq \underline 0}$, $a_{\underline 0} = 1$, such that Equation \eqref{eqn1:Kaluza} holds. In fact, for every $\s_0 > \underline 0$, Equation \eqref{eqn2:Kaluza} provides a positive real number $a_{\s_0}$. Thus, 
$$K := \frac{1}{L} = \displaystyle \sum_{\s \geq \underline 0} a_{\s} K_{\s}$$
is a non-negative definite kernel on $\mathbb B_r$, in particular on $\Omega$. Also, it follows from Corollary \ref{k-inv-cnp} that $K$ is CNP. 

\begin{remark} \label{remME3}

In~fact, \emph{all} CNP kernels on $\Omega$ arise essentially by a construction as in the last example. Namely, let $c_{\underline s}\ge0$ be such that
$$ \sum_{|\underline s|>0} c_{\underline s} K_{\underline s}(\mathbf e,\mathbf e) \le 1 $$
where $\mathbf e$ is any element of the Shilov boundary of~$\Omega$ (a~maximal tripotent, see~\cite{Arazy}; the value of $K_{\underline s}(\mathbf e,\mathbf e)$ is independent of the choice of $\mathbf e$ and there is an explicit formula for it in terms of the signature $\underline s$ and the domain invariants $r$, $a$ and $b$, see \cite[Theorem 3.4]{FK}). Setting
$$ L := \sum_{|\underline s|>0} c_{\underline s} K_{\underline s} , $$
we then have $0\le L(\boldsymbol z,\boldsymbol z)<1$ for all $\boldsymbol z\in\Omega$. The Cauchy-Schwarz inequality $|K_{\underline s}(\boldsymbol z,\boldsymbol w)|^2\le K_{\underline s}(\boldsymbol z,\boldsymbol z)K_{\underline s}(\boldsymbol w,\boldsymbol w)$ thus shows that the series for $L$ converges on all of $\Omega\times\Omega$, is bounded by 1 in modulus there, defines a nonnegative definite kernel. By Remark \ref{remME2},
$$ K := \frac1{1-L}  $$ will therefore be a CNP kernel on $\Omega$. Reversing this procedure also shows that all CNP kernels on $\Omega$ arise in this way.
\end{remark}

%Since $b_{\epsilon_1} = -\hat{b}_{\epsilon_1} \neq 0$, it follows from Lemma \ref{}, that the multiplication operators by the coordinates functions are bounded on the Hilbert space $(\mathcal{H}, K).$ 

%\begin{lemma}
%Let $K : \Omega \times \Omega \to \mathbb C$ be a $\mathbb K$-invariant CNP kernel and $\boldsymbol{M} = (M_1, \ldots, M_d)$ be the tuple of multiplication by the coordinate functions on $(\mathcal{H}, K)$. Then $\boldsymbol{M}$ is a $1/K$-contraction.    
%\end{lemma}

\section{CNP property and Characteristic function}

A non-vanishing unitary invariant kernel $K$ on $\mathbb B_d$ is said to be an admissible kernel if the tuple $\boldsymbol{M} = (M_{z_1}, \ldots, M_{z_d})$ of multiplication operators by the co-ordinate functions is bounded and $\boldsymbol{M}$ is a $\frac{1}{K}$-contraction. In \cite{Tiret}, it is proved that an admissible kernel $K$ on $\mathbb B_d$ possesses the CNP property if and only if every pure $\frac{1}{K}$-contraction admits a characteristic function. In this section, given a $\mathbb K$-invariant kernel on $\Omega$, we provide a suitable definition of $\frac{1}{K}$-contractivity. Our main objective of this section is to provide a necessary and sufficient condition for $\mathbb K$-invariant kernels to possess the CNP property in terms of the existence of the characteristic function of every pure $\frac{1}{K}$-contraction. 

Recall that if $K = \sum_{\s \geq \underline 0} a_{\s} K_{\s}$ is a $\mathbb K$-invariant kernel, then, due to Proposition \ref{pp2}, there exists a sequence of real numbers $\{b_{\s}\}_{\s > \underline 0}$ such that $1 - \frac{1}{K} = \sum_{\s > \underline 0} b_{\s}K_{\s}$. Unless otherwise mentioned, henceforth, for a $\mathbb K$-invariant kernel $K = \sum_{\s} a_{\s} K_{\s}$, $\{b_{\s}\}_{\s > \underline 0}$ represents a sequence of real numbers such that $1 - \frac{1}{K} = \sum_{\s > \underline 0} b_{\s}K_{\s}$. We begin with a few necessary definitions.

\begin{definition}
(i) Let $K = \sum_{\s} a_{\s}K_{\s}$ be a non-vanishing $\mathbb K$- invariant kernel. Suppose $\boldsymbol{T}$ is a commuting $d$-tuple of bounded operators. Assume that the series 
$$\displaystyle \sum_{\s > \underline 0} b_{\s} \sum_{\alpha=1}^{d_{\s}} \psi_\alpha^{\s}(\boldsymbol{T}) \psi_\alpha^{\s}(\boldsymbol{T})^*$$
converges strongly. If 
$$I - \displaystyle \sum_{\s > \underline 0} b_{\s} \sum_{\alpha=1}^{d_{\s}} \psi_\alpha^{\s}(\boldsymbol{T}) \psi_\alpha^{\s}(\boldsymbol{T})^* \geq 0,$$
then $\boldsymbol{T}$ is called a $\frac{1}{K}$-contraction.
Denote $$\Delta_{\boldsymbol T} := \left(I- \sum_{\s>\underline 0} b_{\s} \sum_{\alpha=1}^{d_{\s}} \psi_\alpha^{\s}(\boldsymbol T)\psi_\alpha^{\s}(\boldsymbol T)^*\right)^{\frac{1}{2}},$$
the positive square root of $I- \sum_{\s>\underline 0} b_{\s} \sum_{\alpha=1}^{d_{\s}} \psi_\alpha^{\s}(\boldsymbol T)\psi_\alpha^{\s}(\boldsymbol T)^*$.

(ii) A $\frac{1}{K}$-contraction $\boldsymbol{T}$ is said to be pure if $\displaystyle \sum_{\s \geq \underline 0} a_{\s} \sum_{\alpha=1}^{d_{\s}} \psi_{\alpha}^{\s}(\boldsymbol{T}) \Delta_{\boldsymbol T}^2 \psi_{\alpha}^{\s}(\boldsymbol{T})^*$ converges to the identity operator $I$ strongly.    
\end{definition}
\begin{definition}
A  $\mathbb K$-invariant kernel $K$ is called admissible if the operators of multiplication by the co-ordinate functions $M_{z_i}$ are bounded operators on $H_K$ for $i =1,\ldots, d$ and the $d$-tuple $\boldsymbol M =(M_{z_1},\ldots, M_{z_d})$ is a $\frac{1}{K}$-contraction.  
\end{definition}

In the following lemma, we provide a sufficient condition on a $\mathbb K$-invariant CNP kernel such that the multiplication operators by the co-ordinate functions on the corresponding Hilbert space are bounded.

\begin{lemma}\label{bounded}
Let $K$ be a $\mathbb K$-invariant CNP kernel and $H_K$ be the corresponding Hilbert space. Then, the multiplication operators $M_{z_i}$, $1 \leq i \leq d$, by the coordinate functions on $H_K$ are bounded. Furthermore, $K$ is an admissible kernel.
\end{lemma}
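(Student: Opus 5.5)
Since $K$ is CNP and non-vanishing, Corollary \ref{k-inv-cnp} gives $b_{\s}\ge 0$ with $1-\frac1K=\sum_{\s>\underline 0} b_{\s}K_{\s}=:L$. The plan is to use the standard factorization $K(\boldsymbol z,\boldsymbol w)=\frac{1}{1-L(\boldsymbol z,\boldsymbol w)}$ and deduce boundedness of the multipliers $z_i$ from positivity of a suitable kernel. Recall that $z_i$ is a bounded multiplier of $H_K$ with $\|M_{z_i}\|\le C$ as soon as $(C^2-z_i\bar w_i)K(\boldsymbol z,\boldsymbol w)$ is non-negative definite.

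The key observation is that $L$ contains the degree-one piece $b_{\epsilon_1}K_{\epsilon_1}$, with $K_{\epsilon_1}(\boldsymbol z,\boldsymbol w)=\langle \boldsymbol z,\boldsymbol w\rangle$ (suitably normalized). Comparing coefficients of $K_{\epsilon_1}$ in $K(1-L)=1$ gives $b_{\epsilon_1}=a_{\epsilon_1}>0$, as in \eqref{eqnx:sec1:kalu}. Hence
$$L-b_{\epsilon_1}z_i\bar w_i=b_{\epsilon_1}\sum_{j\ne i}z_j\bar w_j+\sum_{|\s|\ge2}b_{\s}K_{\s}$$
is non-negative definite. Then I write
$$\Big(\tfrac1{b_{\epsilon_1}}-z_i\bar w_i\Big)K=\tfrac1{b_{\epsilon_1}}K\big(1-L\big)+\tfrac1{b_{\epsilon_1}}K\big(L-b_{\epsilon_1}z_i\bar w_i\big)=\tfrac1{b_{\epsilon_1}}+\tfrac1{b_{\epsilon_1}}K\cdot\big(L-b_{\epsilon_1}z_i\bar w_i\big).$$
The first term is positive, and the second is a Schur product of non-negative definite kernels. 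So $\|M_{z_i}\|\le b_{\epsilon_1}^{-1/2}$.

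For admissibility, I need $\sum_{\s>\underline 0}b_{\s}\sum_\alpha\psi^{\s}_\alpha(\boldsymbol M)\psi^{\s}_\alpha(\boldsymbol M)^*$ to converge strongly to an operator $\le I$. The plan is to test on kernel functions, using $\psi(\boldsymbol M)^*K(\cdot,\boldsymbol w)=\overline{\psi(\boldsymbol w)}K(\cdot,\boldsymbol w)$. For the partial sums $P_N$ over $|\s|\le N$ this gives
$$\langle P_N K_{\boldsymbol w},K_{\boldsymbol z}\rangle=L_N(\boldsymbol z,\boldsymbol w)K(\boldsymbol z,\boldsymbol w),$$
where $L_N$ is the truncation of $L$. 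Since every $b_{\s}\ge0$, the $P_N$ increase. Moreover $(I-P_N)$ corresponds to the kernel $(1-L_N)K=1+(L-L_N)K$, which is non-negative definite, being $1$ plus a Schur product of positive kernels. So $0\le P_N\le I$, and an increasing bounded sequence of positive operators converges strongly. Its limit $P$ satisfies $I-P\ge0$, and in fact $\langle (I-P)K_{\boldsymbol w},K_{\boldsymbol z}\rangle=1$, so $\Delta_{\boldsymbol M}^2$ is the projection onto constants. This shows $\boldsymbol M$ is a $\frac1K$-contraction, and hence $K$ is admissible.

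The main obstacle is the passage from positivity on the dense span of kernel functions to operator inequalities and strong convergence. The remedy is to use the uniform bound $0\le P_N\le I$, which was established on kernel functions and extends by density since the $P_N$ are bounded (finite sums of products of bounded $M_{z_i}$). A secondary point is to confirm the normalization $K_{\epsilon_1}=\langle\boldsymbol z,\boldsymbol w\rangle$ up to a positive constant, which follows from $\mathbb K$-invariance and $\mathcal P_{\epsilon_1}$ being the linear functions.
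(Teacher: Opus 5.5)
Your proof is correct and follows essentially the same route as the paper. Both use $b_{\epsilon_1}=a_{\epsilon_1}>0$ from \eqref{eqnx:sec1:kalu}, $b_{\underline s}\ge 0$ from Corollary \ref{k-inv-cnp}, and the positivity of the truncated kernel $(1-L_N)K=1+(L-L_N)K$; with $N=1$ this gives $(\tfrac{1}{b_{\epsilon_1}}-\langle \boldsymbol z,\boldsymbol w\rangle)K\ge 0$ and hence bounded $M_{z_i}$. The only difference is that the paper cites Bagchi--Misra for the multiplier criterion and defers admissibility to an argument of Hartz, whereas you carry out the admissibility step explicitly via the increasing, uniformly bounded partial sums $0\le P_N\le I$.
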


% \begin{lemma}\label{admissible}
% Let $\mathcal H$ be a $\mathbb K$-invariant CNP Space on a Cartan domain $\Omega$ with kernel $K$. Further assume that $b_{\epsilon_1}$ in \eqref{eqn:1-1/k} is non-zero. Then $K$ is an admissible kernel.
% \end{lemma}

% \begin{proof}
% It is enough to prove
%  $$ \Big\langle (I - \displaystyle \sum_{\s > \underline 0}^{|\s|=N} b_{\s} \sum_{\alpha=1}^{d_{\s}} \psi_\alpha^{\s}(\boldsymbol{M}) \psi_\alpha^{\s}(\boldsymbol{M})^*)K(., z) , K(., z)\Big\rangle \geq 0 ,$$ which is equivalent to prove
%  \beq \label{hartz5.2}
%  (I-\displaystyle \sum_{\s > \underline 0}^{|\s|=N}  b_{\s}\sum_{\alpha=1}^{d_{\s}} \psi_\alpha^{\s}(\boldsymbol{z}) \overline{\psi_\alpha^{\s}(\boldsymbol{z})})K(z,z)\geq 0.\eeq
%  Since $b_{\s}\geq 0$ for all $\s$ and
%  $$(I-\displaystyle \sum_{\s > \underline 0}  b_{\s}\sum_{\alpha=1}^{d_{\s}} \psi_\alpha^{\s}(\boldsymbol{z}) \overline{\psi_\alpha^{\s}(\boldsymbol{z})})K(z,z)=1,$$ Equation \eqref{hartz5.2} follows.
% \end{proof}
\begin{proof}
Recall that $a_{\s} > 0$ for each signature $\s \in \vec{N}_0^r$. From Equation \eqref{eqnx:sec1:kalu}, we have $b_{\epsilon_1} = -\hat{b}_{\epsilon_1} = a_{\epsilon_1}$ and therefore, $b_{\epsilon_1} > 0.$ Due to Corollary \ref{k-inv-cnp}, $b_{\s}\geq 0$ for all $\s$ and
 $$(1-\displaystyle \sum_{\s > \underline 0}  b_{\s}\sum_{\alpha=1}^{d_{\s}} \psi_\alpha^{\s}(\boldsymbol{z}) \overline{\psi_\alpha^{\s}(\boldsymbol{w})})K(\boldsymbol{z}, \boldsymbol{w})=1.$$ 
 Thus, it follows that 
  \beq 
 (1-\displaystyle \sum_{|\s| = 1}^{N}  b_{\s}\sum_{\alpha=1}^{d_{\s}} \psi_\alpha^{\s}(\boldsymbol{z}) \overline{\psi_\alpha^{\s}(\boldsymbol{w})})K(\boldsymbol{z}, \boldsymbol{w})\eeq
is non-negative definite. Taking $N = 1$ we see that, $(\frac{1}{b_{\epsilon_1}}-\langle\boldsymbol{z}, \boldsymbol{w}\rangle)K(\boldsymbol{z}, \boldsymbol{w})$
is non-negative definite. Hence by \cite[Lemma 3.1]{Bagchimisra} it follows that the multiplication operators $M_{z_i}$, $1 \leq i \leq d$, are bounded. 

The fact that the tuple of multiplication by the coordinate functions $\boldsymbol{M}$ on $H_K$ is a $\frac{1}{K}$-contraction follows by an argument similar to that in the proof of \cite[Lemma 5.2]{Hartz}.
\end{proof}

In the following, we provide a technical lemma.

\begin{lemma}\label{cha:existence fo b_s}
Let $K = \sum_{\s \geq \underline 0} a_{\s}K_{\s}$ be a $\mathbb K$-invariant kernel on $\Omega$. If $1 - \frac{1}{K} = \sum_{\s >0} b_{\s} K_{\s}$ for a sequence of real numbers $\{b_{\s}\}_{\s>\underline 0}$, then, for every signature $\underline p > 0$, we have
\begin{equation}\label{cha:eqn:1}
a_{\underline p} = \sum_{\substack{\s \geq \underline 0, \tilde{\s} > 0,\\ |\s| + |\tilde{\s}| = |\underline p|}} a_{\s} b_{\tilde{\s}} c_{\s, \tilde{\s}}^{\underline p}.  
\end{equation}
\end{lemma}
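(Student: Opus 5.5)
Rewrite the hypothesis $1-\frac1K=\sum_{\s>\underline 0} b_{\s}K_{\s}$ multiplicatively as
$$K - 1 = K\Big(\sum_{\tilde{\s}>\underline 0} b_{\tilde{\s}}K_{\tilde{\s}}\Big),$$
which is legitimate since $K$ is non-vanishing on $\Omega\times\Omega$. The left side equals $\sum_{\underline p>\underline 0} a_{\underline p}K_{\underline p}$ because $a_{\underline 0}=1$. The plan is to expand the right side as a double series, re-expand each product $K_{\s}K_{\tilde{\s}}$ via \eqref{exist:eqn1 b_s}, and compare coefficients of $K_{\underline p}$.

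First, the product of the two series is to be handled on a small neighbourhood of the origin, e.g.\ $t\boldsymbol z, t\boldsymbol w$ with $t$ small. By Proposition \ref{pp2} (together with Proposition \ref{pp1}(ii)), both $\sum a_{\s}K_{\s}$ and $\sum b_{\tilde{\s}}K_{\tilde{\s}}$ converge absolutely and uniformly on compacts. Hence their Cauchy product $\sum_{\s,\tilde{\s}} a_{\s}b_{\tilde{\s}}K_{\s}K_{\tilde{\s}}$ converges absolutely and can be regrouped freely. Substituting \eqref{exist:eqn1 b_s} gives
$$\sum_{\s\ge\underline 0,\ \tilde{\s}>\underline 0}\ a_{\s}b_{\tilde{\s}}\sum_{|\underline p|=|\s|+|\tilde{\s}|} c^{\underline p}_{\s,\tilde{\s}}K_{\underline p}.$$
For fixed $\underline p$, only finitely many pairs $(\s,\tilde{\s})$ contribute, namely those with $|\s|+|\tilde{\s}|=|\underline p|$. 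So regrouping by $\underline p$ yields $\sum_{\underline p}\big(\sum a_{\s}b_{\tilde{\s}}c^{\underline p}_{\s,\tilde{\s}}\big)K_{\underline p}$. The regrouping is justified by grading by total degree: the degree-$n$ homogeneous part in $\boldsymbol z$ is a finite sum, and we may identify homogeneous components of the power series in $(\boldsymbol z,\bar{\boldsymbol w})$.

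The main obstacle is the uniqueness of coefficients: if $\sum_{\underline p} d_{\underline p}K_{\underline p}\equiv 0$, we need every $d_{\underline p}=0$. I would first restrict to a fixed degree $n$, since homogeneous parts are unique. Then I would use that the $K_{\underline p}$ with $|\underline p|=n$ are reproducing kernels of mutually orthogonal subspaces $\mathcal P_{\underline p}$ in the Fischer--Fock space. Fixing $\boldsymbol z$ and taking the Fischer inner product in $\boldsymbol w$ against an element $\psi^{\underline p}_\alpha$ gives $d_{\underline p}\psi^{\underline p}_\alpha(\boldsymbol z)=0$, since $\langle K_{\s}(\cdot,\boldsymbol z),\psi^{\underline p}_\alpha\rangle_F=\delta_{\s\underline p}\overline{\psi^{\underline p}_\alpha(\boldsymbol z)}$. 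Therefore $d_{\underline p}=0$. Applying this to the difference of the two expansions of $K-1$ gives \eqref{cha:eqn:1} for each $\underline p>\underline 0$.
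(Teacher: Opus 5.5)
Your proposal is correct and follows the paper's argument: multiply the hypothesis through by $K$ to get $K = 1 + K\cdot\sum_{\tilde{\s}>\underline 0} b_{\tilde{\s}}K_{\tilde{\s}}$, expand each product $K_{\s}K_{\tilde{\s}}$ via \eqref{exist:eqn1 b_s}, and compare the coefficients of $K_{\underline p}$. You also justify two steps the paper leaves implicit: the degree-by-degree bookkeeping that makes the regrouping legitimate, and the uniqueness of the coefficients in $\sum d_{\underline p}K_{\underline p}\equiv 0$, which you get from the Fischer--Fock orthogonality of the spaces $\mathcal P_{\underline p}$.
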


\begin{proof}
The existence of a sequence of real numbers $\{b_{\s}\}_{\s>\underline 0}$ such that $1 - \frac{1}{K} = \sum_{\s > \underline 0} b_{\s} K_{\s}$ follows from Proposition \ref{pp2}. The equation $1 - \frac{1}{K} = \sum_{\s > \underline 0} b_{\s} K_{\s}$ is equivalent to the following equation
$$\displaystyle \sum_{\s \geq \underline 0} a_{\s} K_{s} = 1 + \left(\sum_{\s \geq \underline 0} a_{\s} K_{s}\right) \left(\sum_{\tilde{\s} > \underline 0} b_{\tilde{\s}} K_{\tilde{s}}\right).$$
Using Equation \eqref{exist:eqn1 b_s} and then, comparing the coefficient of $K_{\underline p}$ from both sides of the above equation, we obtain  
$$a_{\underline p} = \sum_{\substack{\s \geq \underline 0, \tilde{\s} > \underline 0,\\ |\s| + |\tilde{\s}| = |\underline p|}}  a_{\s} b_{\tilde{\s}} c_{\s, \tilde{\s}}^{\underline p},$$
for every signature $\underline p > \underline 0.$
\end{proof}

The following proposition proves that the series $\sum_{\s \geq \underline 0} a_{\s} \sum_{\alpha=1}^{d_{\s}} \psi_{\alpha}^{\s}(\boldsymbol{T}) \Delta_{\boldsymbol{T}}^2 \psi_{\alpha}^{\s}(\boldsymbol{T})^*$
is always convergent for every $\frac{1}{K}$-contraction $\boldsymbol{T}$.

\begin{proposition}\label{lem:cha:2}
For any $\frac{1}{K}$-contraction $\boldsymbol{T} = (T_1, \ldots, T_d)$ the series
$$\displaystyle \sum_{\s \geq \underline 0} a_{\s} \sum_{\alpha=1}^{d_{\s}} \psi_{\alpha}^{\s}(\boldsymbol{T}) \Delta_{\boldsymbol{T}}^2 \psi_{\alpha}^{\s}(\boldsymbol{T})^*$$
converges strongly to a positive contraction.
\end{proposition}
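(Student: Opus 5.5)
Since the terms $a_{\s}\sum_{\alpha}\psi_\alpha^{\s}(\boldsymbol T)\Delta_{\boldsymbol T}^2\psi_\alpha^{\s}(\boldsymbol T)^*$ are positive operators ($a_{\s}>0$), the partial sums
$$S_N:=\sum_{|\s|\le N} a_{\s}\sum_{\alpha=1}^{d_{\s}}\psi_\alpha^{\s}(\boldsymbol T)\Delta_{\boldsymbol T}^2\psi_\alpha^{\s}(\boldsymbol T)^*$$
form an increasing sequence of positive operators. A bounded increasing sequence of positive operators converges strongly. The whole proof therefore reduces to showing $S_N\le I$ for every $N$, and I will aim for an explicit identity expressing $I-S_N$ as a sum of positive operators.

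\textbf{Key identity.} Write $\Delta_{\boldsymbol T}^2=I-\sum_{\tilde{\s}>\underline 0}b_{\tilde{\s}}\sum_\beta\psi_\beta^{\tilde{\s}}(\boldsymbol T)\psi_\beta^{\tilde{\s}}(\boldsymbol T)^*$ and substitute this into $S_N$. To handle products, I will use the operator form of \eqref{exist:eqn1 b_s}:
$$\sum_{\alpha,\beta}\psi_\alpha^{\s}(\boldsymbol T)\psi_\beta^{\tilde{\s}}(\boldsymbol T)\psi_\beta^{\tilde{\s}}(\boldsymbol T)^*\psi_\alpha^{\s}(\boldsymbol T)^*=\sum_{\underline p}c^{\underline p}_{\s,\tilde{\s}}\sum_\gamma\psi^{\underline p}_\gamma(\boldsymbol T)\psi^{\underline p}_\gamma(\boldsymbol T)^*.$$
This holds because the scalar identity $K_{\s}K_{\tilde{\s}}=\sum c^{\underline p}_{\s,\tilde{\s}}K_{\underline p}$ is an identity of sesqui-polynomials in $(\boldsymbol z,\bar{\boldsymbol w})$, and the hereditary functional calculus ($z$'s on the left, $\bar w$'s on the right) is linear and multiplicative in this sense, since the $T_i$ commute.

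\textbf{Telescoping.} Collecting coefficients of $\sum_\gamma\psi^{\underline p}_\gamma\psi^{\underline p*}_\gamma$ and applying Lemma \ref{cha:existence fo b_s} ($a_{\underline p}=\sum a_{\s}b_{\tilde{\s}}c^{\underline p}_{\s,\tilde{\s}}$), every $\underline p$ with $|\underline p|\le N$ cancels, leaving
$$I-S_N=\sum_{|\s|\le N}a_{\s}\sum_{\tilde{\s}>\underline 0,\,|\s|+|\tilde{\s}|>N}b_{\tilde{\s}}\sum_{\alpha,\beta}\psi_\alpha^{\s}(\boldsymbol T)\psi_\beta^{\tilde{\s}}(\boldsymbol T)\psi_\beta^{\tilde{\s}}(\boldsymbol T)^*\psi_\alpha^{\s}(\boldsymbol T)^*,$$
interpreted as a strong limit. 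The rearrangement is justified because the inner series in $\tilde{\s}$ converges strongly by the definition of $\frac{1}{K}$-contraction, conjugation by the bounded operators $\psi_\alpha^{\s}(\boldsymbol T)$ preserves strong convergence, and the outer sum is finite.

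\textbf{Main obstacle: positivity of the remainder.} The difficulty is showing that this remainder is positive without assuming $b_{\tilde{\s}}\ge0$, i.e. without the CNP hypothesis. If $b_{\tilde{\s}}\ge 0$, positivity is immediate, and this is the case relevant to the surrounding section. For a general $\frac1K$-contraction, I would regroup the remainder as
$$\sum_{|\s|\le N}a_{\s}\sum_\alpha\psi_\alpha^{\s}(\boldsymbol T)\Big(\sum_{|\tilde{\s}|>N-|\s|}b_{\tilde{\s}}\sum_\beta\psi_\beta^{\tilde{\s}}\psi_\beta^{\tilde{\s}*}\Big)\psi_\alpha^{\s}(\boldsymbol T)^*,$$
and try to show by induction on $N$ that each tail $\sum_{|\tilde{\s}|>m}b_{\tilde{\s}}\sum_\beta\psi_\beta^{\tilde{\s}}\psi_\beta^{\tilde{\s}*}$ is dominated appropriately. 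Failing that, I would adopt the standing CNP assumption ($b_{\s}\ge0$, as in Corollary \ref{k-inv-cnp}), under which the argument is complete. In either case, $0\le S_N\le I$ gives strong convergence to a positive contraction.
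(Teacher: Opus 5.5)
Your argument is the paper's argument. The paper likewise substitutes the expression for $\Delta_{\boldsymbol T}^2$ into the partial sums, rewrites the double products via \eqref{exist:eqn1 b_s}, and cancels every $\underline p$ with $|\underline p|\le N$ using Lemma~\ref{cha:existence fo b_s}. It works with $\langle S_Nh,h\rangle$ rather than an operator identity, and leaves implicit the monotonicity of $S_N$ that you state. The obstacle you isolate is real, and the paper does not overcome it. Its inequality step simply drops the terms with $|\underline p|>N$. Those terms are exactly your remainder, and dropping them is legitimate only if $b_{\tilde{\s}}\ge 0$.

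Do not spend effort on the induction you sketch: without $b_{\s}\ge 0$ the proposition is false. On the disc (a rank-one Cartan domain) take the Bergman kernel $K(z,w)=(1-z\bar w)^{-2}$. Then $\frac1K=1-2z\bar w+z^2\bar w^2$, so $\Delta_T^2=I-2TT^*+T^2T^{*2}$ and $S_N=\sum_{n=0}^N(n+1)T^n\Delta_T^2T^{*n}$. The scalar $T=2$ on $\mathbb C$ is a $\frac1K$-contraction in the sense of the definition, since $\Delta_T^2=1-8+16=9\ge0$. Yet $S_N=9\sum_{n=0}^N(n+1)4^n\to\infty$. In your notation the remainder is $I-S_N=T^{N+1}\bigl((N+1)(I-TT^*)+I\bigr)T^{*(N+1)}$. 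This is positive when $T$ is in addition a contraction, but here it equals $-(3N+2)4^{N+1}$. So the statement genuinely requires the CNP hypothesis (or some extra condition on $\boldsymbol T$). Once $b_{\s}\ge0$ is assumed, your proof is complete and coincides with the paper's.
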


\begin{proof}
If the convergence of the series, given in the statement of the lemma, is proved, then the positivity follows immediately. For $N \geq 1$, let 
$$S_N = \displaystyle \sum_{|\s| = 0}^N a_{\s} \sum_{\alpha = 1}^{d_{\s}} \psi_{\alpha}^{\s}(\boldsymbol{T}) \Delta_{\boldsymbol{T}}^2 \psi_{\alpha}^{\s}(\boldsymbol{T})^*.$$
Let $h \in \mathcal{H}$. Then,
\begin{flalign}\label{ch:eqn:new2}
\nonumber &\left\langle S_Nh, h \right\rangle\\
\nonumber &\phantom{xx}=  \displaystyle \sum_{|\s| = 0}^N a_{\s} \sum_{\alpha = 1}^{d_{\s}} \left\langle \psi_{\alpha}^{\s}(\boldsymbol{T}) \Delta_{\boldsymbol{T}}^2 \psi_{\alpha}^{\s}(\boldsymbol{T})^*h, h \right\rangle\\
 &\phantom{xx}= \displaystyle \sum_{|\s| = 0}^N a_{\s} \sum_{\alpha = 1}^{d_{\s}} \|\psi_{\alpha}^{\s}(\boldsymbol{T})^* h\|^2 - \displaystyle \sum_{|\s| = 0}^N a_{\s} \sum_{\tilde{\s} > \underline 0} \left\langle \sum_{\alpha = 1}^{d_{\s}} \sum_{\tilde{\alpha} = 1}^{d_{\tilde{\s}}} b_{\tilde{\s}} \psi_{\alpha}^{\s}(\boldsymbol{T}) \psi_{\tilde{\alpha}}^{\tilde{\s}}(\boldsymbol{T}) \psi_{\tilde{\alpha}}^{\tilde{\s}}(\boldsymbol{T})^*\psi_{\alpha}^{\s}(\boldsymbol{T})^* h, h \right\rangle.  
\end{flalign}
Note that the identity
$$\displaystyle \sum_{\alpha = 1}^{d_{\s}} \sum_{\tilde{\alpha} = 1}^{d_{\tilde{\s}}} \psi_{\alpha}^{\s}(\boldsymbol{T}) \psi_{\tilde{\alpha}}^{\tilde{\s}}(\boldsymbol{T}) \psi_{\tilde{\alpha}}^{\tilde{\s}}(\boldsymbol{T})^*\psi_{\alpha}^{\s}(\boldsymbol{T})^* = \displaystyle \sum_{|\underline p| = |\s| + |\tilde{\s}|} c^{\underline p}_{\s, \tilde{\s}} \sum_{\beta}^{d_{\underline p}} \psi_{\beta}^{\underline p}(\boldsymbol{T}) \psi_{\beta}^{\underline p}(\boldsymbol{T})^*$$
follows from Equation \eqref{exist:eqn1 b_s}. Using the above identity in Equation \eqref{ch:eqn:new2}, we obtain
\begin{flalign*}\label{ch:eqn:new2}
\left\langle S_Nh, h \right\rangle 
 &= \displaystyle \sum_{|\s| = 0}^N a_{\s} \sum_{\alpha = 1}^{d_{\s}} \|\psi_{\alpha}^{\s}(\boldsymbol{T})^* h\|^2 - \displaystyle \sum_{|\s| = 0}^N \sum_{\tilde{\s} > \underline 0} \sum_{|\underline p| = |\s| + |\tilde{\s}|} a_{\s} b_{\tilde{\s}} c^{\underline p}_{\s, \tilde{\s}}  \sum_{\beta = 1}^{d_{\underline p}} \|\psi_{\beta}^{\underline p}(\boldsymbol{T})^* h\|^2\\
 &\leq \displaystyle \sum_{|\s| = 0}^N a_{\s} \sum_{\alpha = 1}^{d_{\s}} \|\psi_{\alpha}^{\s}(\boldsymbol{T})^* h\|^2 - \displaystyle \sum_{1 \leq |\underline p| \leq N} \left( \sum_{\substack{0 \leq |\s| \leq N\\ \tilde{\s} > \underline 0\\ |\s| + |\tilde{\s}| = |\underline p|}} a_{\s} b_{\tilde{\s}} c^{\underline p}_{\s, \tilde{\s}} \right)  \sum_{\beta = 1}^{d_{\underline p}} \|\psi_{\beta}^{\underline p}(\boldsymbol{T})^* h\|^2\\
 &= \displaystyle \sum_{|\s| = 0}^N a_{\s} \sum_{\alpha = 1}^{d_{\s}} \|\psi_{\alpha}^{\s}(\boldsymbol{T})^* h\|^2 - \displaystyle \sum_{1 \leq |\underline p| \leq  N} a_{\underline p} \sum_{\beta = 1}^{d_{\underline p}} \|\psi_{\beta}^{\underline p}(\boldsymbol{T})^* h\|^2\\
 &= \|h\|^2.
\end{flalign*}
Here, the second last equality follows from Lemma \ref{cha:existence fo b_s}. This proves that the sequence of operators $\{S_N\}$ converges strongly to a contraction.
\end{proof}

\begin{remark}\label{rem 1}
For any $\frac{1}{K}$-contraction $\boldsymbol{T} = (T_1, \ldots, T_d)$, the proof of Proposition \ref{lem:cha:2} implies that 
$$S_N = \displaystyle \sum_{|\s| = 0}^N a_{\s} \sum_{\alpha = 1}^{d_{\s}} \psi_{\alpha}^{\s}(\boldsymbol{T}) \Delta_{\boldsymbol{T}}^2 \psi_{\alpha}^{\s}(\boldsymbol{T})^*$$
is a contraction for every $N \geq 1$ and therefore $\{\|S_N\|\}$ is bounded above by $1$.       
\end{remark}

The operator $V_T$, defined in the following corollary, plays a significant role in the theory $\frac{1}{K}$-calculus and also, in the theory of $\frac{1}{K}$-contractions (cf. \cite{Arazy2}). Following corollary is an immediate consequence of Proposition \ref{lem:cha:2} and \cite[Theorem 1.3]{Arazy2}.

\begin{corollary}\label{cor:vt contraction}
If $\boldsymbol{T} = (T_1, \ldots, T_d)$ is a $\frac{1}{K}$-contraction on a Hilbert space $\mathcal{H}$, then the operator $V_{\boldsymbol{T}} : \mathcal{H} \to H_{K} \otimes \overline{\mbox{Ran}}\,\, \Delta_{\boldsymbol{T}}$, defined by
\begin{equation}\label{char:eqn:def V_T}
    V_{\boldsymbol{T}}(h) = \displaystyle \sum_{\s \geq \underline 0} a_{\s} \sum_{\alpha = 1}^{d_{\s}} \psi_{\alpha}^{\s} \otimes \Delta_{\boldsymbol{T}} \psi_{\alpha}^{\s}(\boldsymbol{T})^*h,\,\,h \in \mathcal{H},
\end{equation}
is a contraction and satisfies
$$V_{\boldsymbol{T}}^* \left(p(\boldsymbol{M}_{\boldsymbol{z}} \otimes I_{\overline{\mbox{Ran}}\,\, \Delta_{\boldsymbol{T}}} )\right) = p(\boldsymbol{T})V_{\boldsymbol{T}}^*,$$
for every polynomials $p$.
\end{corollary}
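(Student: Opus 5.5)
We want to show that $V_{\boldsymbol T}$ is a well-defined contraction and that it intertwines $p(\boldsymbol T)$ with $p(\boldsymbol{M}_{\boldsymbol z}\otimes I)$ in the adjoint form stated. The plan is to compute $\|V_{\boldsymbol T}h\|^2$ directly from the reproducing kernel structure of $H_K$ and then reduce it to Proposition \ref{lem:cha:2}.

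\textbf{Norm computation.} The first point is that the functions $\sqrt{a_{\s}}\,\psi^{\s}_\alpha$ form an orthonormal basis of $H_K$, since $K=\sum_{\s} a_{\s}\sum_\alpha \psi^{\s}_\alpha\overline{\psi^{\s}_\alpha}$. To justify this, note that the $\mathcal P_{\s}$ are mutually inequivalent irreducible $\mathbb K$-modules and the $\mathbb K$-invariant inner product of $H_K$ restricts on each $\mathcal P_{\s}$ to a multiple of the Fischer--Fock one. Hence $a_{\s}\psi^{\s}_\alpha$ has norm $\sqrt{a_{\s}}$ in $H_K$, and these vectors are mutually orthogonal. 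Consider the partial sums
$$V_N h=\sum_{|\s|\le N}a_{\s}\sum_\alpha \psi^{\s}_\alpha\otimes\Delta_{\boldsymbol T}\psi^{\s}_\alpha(\boldsymbol T)^*h.$$
By orthogonality,
$$\|V_Nh\|^2=\sum_{|\s|\le N}a_{\s}\sum_\alpha\|\Delta_{\boldsymbol T}\psi^{\s}_\alpha(\boldsymbol T)^*h\|^2=\langle S_Nh,h\rangle,$$
where $S_N$ is the partial sum from Proposition \ref{lem:cha:2}. That proposition gives $\langle S_Nh,h\rangle\le\|h\|^2$ and the convergence of $\langle S_Nh,h\rangle$. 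Consequently the orthogonal series defining $V_{\boldsymbol T}h$ converges, and $\|V_{\boldsymbol T}h\|\le\|h\|$, so $V_{\boldsymbol T}$ is a contraction.

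\textbf{Intertwining relation.} By linearity and multiplicativity it suffices to check the relation for the coordinate functions $p=z_i$, i.e.\ to prove $V_{\boldsymbol T}^*(M_{z_i}\otimes I)=T_iV_{\boldsymbol T}^*$. Equivalently, we show $(M_{z_i}^*\otimes I)V_{\boldsymbol T}=V_{\boldsymbol T}T_i^*$, and this can be tested against vectors $K(\cdot,\boldsymbol w)\otimes\xi$. We have
$$\langle V_{\boldsymbol T}h,K(\cdot,\boldsymbol w)\otimes\xi\rangle=\Big\langle \sum_{\s}a_{\s}\sum_\alpha\overline{\psi^{\s}_\alpha(\boldsymbol w)}\,\psi^{\s}_\alpha(\boldsymbol T)\Delta_{\boldsymbol T}\xi\ ,\ h\Big\rangle^{-},$$
which is essentially $\langle h, K(\boldsymbol T,\boldsymbol w)\Delta_{\boldsymbol T}\xi\rangle$. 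Since $M_{z_i}^*K(\cdot,\boldsymbol w)=\overline{w_i}K(\cdot,\boldsymbol w)$, the required identity reduces to
$$T_iK(\boldsymbol T,\boldsymbol w)\Delta_{\boldsymbol T}\xi=w_iK(\boldsymbol T,\boldsymbol w)\Delta_{\boldsymbol T}\xi.$$
This is not literally true; the correct formulation is instead $V_{\boldsymbol T}^*(K(\cdot,\boldsymbol w)\otimes\xi)=K(\boldsymbol T,\boldsymbol w)\Delta_{\boldsymbol T}\xi$, where $K(\boldsymbol T,\boldsymbol w)$ means the holomorphic functional calculus in the first variable. We then compute $V_{\boldsymbol T}^*(z_iK(\cdot,\boldsymbol w)\otimes\xi)$ by expanding $z_iK(\cdot,\boldsymbol w)$ in the basis $\psi^{\s}_\alpha$. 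Because $V_{\boldsymbol T}^*$ maps $q\otimes\xi\mapsto q(\boldsymbol T)\Delta_{\boldsymbol T}\xi$ on polynomials $q$, we get the general rule $V_{\boldsymbol T}^*(q\otimes\xi)=q(\boldsymbol T)\Delta_{\boldsymbol T}\xi$. To verify this rule, observe that $\langle V_{\boldsymbol T}h,\psi^{\s}_\alpha\otimes\xi\rangle=\langle\Delta_{\boldsymbol T}\psi^{\s}_\alpha(\boldsymbol T)^*h,\xi\rangle$, because $\|\psi^{\s}_\alpha\|_{H_K}^2=1/a_{\s}$; taking adjoints gives $V_{\boldsymbol T}^*(\psi^{\s}_\alpha\otimes\xi)=\psi^{\s}_\alpha(\boldsymbol T)\Delta_{\boldsymbol T}\xi$.

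With the rule in hand the intertwining is immediate. Polynomials are dense in $H_K$, since the $\psi^{\s}_\alpha$ form an orthogonal basis, and $M_{z_i}$ maps polynomials to polynomials. For a polynomial $q$,
$$V_{\boldsymbol T}^*(p(\boldsymbol M_{\boldsymbol z})q\otimes\xi)=(pq)(\boldsymbol T)\Delta_{\boldsymbol T}\xi=p(\boldsymbol T)V_{\boldsymbol T}^*(q\otimes\xi).$$
Boundedness of $M_{z_i}$ on $H_K$, needed here, holds when $K$ is admissible, e.g.\ by Lemma \ref{bounded}, while $V_{\boldsymbol T}^*$ is bounded by the first step. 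Extending by continuity from the dense set of polynomials finishes the argument.

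\textbf{Main obstacle.} The main obstacle is the identification of the $H_K$-norms of the $\psi^{\s}_\alpha$, that is, the orthogonality of the $\mathcal P_{\s}$ in $H_K$ with norm-squared $1/a_{\s}$. I would get this from Schur's lemma, as above, or cite \cite[Theorem 1.3]{Arazy2}.
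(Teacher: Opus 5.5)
Your proof is correct and follows the paper's route. The contraction property comes from the identity $\|V_Nh\|^2=\langle S_Nh,h\rangle$ (orthogonality of the $\mathcal P_{\s}$ in $H_K$ with $\|\psi^{\s}_\alpha\|_{H_K}^2=1/a_{\s}$) combined with Proposition \ref{lem:cha:2}. For the intertwining you verify $V_{\boldsymbol T}^*(q\otimes\xi)=q(\boldsymbol T)\Delta_{\boldsymbol T}\xi$ directly instead of citing \cite[Theorem 1.3]{Arazy2}. This has the small advantage of not needing purity of $\boldsymbol T$, which Theorem \ref{isometry} assumes but the corollary does not.

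Delete the abandoned detour through $K(\boldsymbol T,\boldsymbol w)$. It contains a wrong reduction: moving $M_{z_i}^*\otimes I$ across the inner product produces $z_iK(\cdot,\boldsymbol w)$, not $\overline{w_i}K(\cdot,\boldsymbol w)$. Your final argument never uses it.
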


Let $E_0$ denote the orthogonal projection of the reproducing kernel Hilbert space $H_K$ onto the one dimensional subspace consisting of the constant functions. The following lemma asserts that the $d$-tuple of multiplication operators by the co-ordinate functions $\boldsymbol{M} = (M_{z_{1}}, \ldots, M_{z_{d}})$ on $H_K$ is pure whenever $K$ is an admissible kernel.

\begin{lemma}\label{lem2.1}
Suppose $K$ is an admissible kernel. Then $\Delta_{\boldsymbol{M}} = E_0$ and the $d$-tuple of multiplication operators by the co-ordinate functions $\boldsymbol{M} = (M_{z_{1}}, \ldots, M_{z_{d}})$, acting on the Hilbert space $H_K$, is pure.
\end{lemma}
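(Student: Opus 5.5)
The plan is to evaluate the defect operator $\Delta_{\boldsymbol M}^2$ on kernel functions and then read off purity from the reproducing property.

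\emph{Step 1: compute $\psi_\alpha^{\s}(\boldsymbol M)^*$ on kernel functions.} For a polynomial $p$, the standard identity $M_p^* K(\cdot,\boldsymbol w)=\overline{p(\boldsymbol w)}K(\cdot,\boldsymbol w)$ gives
$$\Big(I-\sum_{\s>\underline 0} b_{\s}\sum_{\alpha}\psi_\alpha^{\s}(\boldsymbol M)\psi_\alpha^{\s}(\boldsymbol M)^*\Big)K(\cdot,\boldsymbol w) = \Big(1-\sum_{\s>\underline 0}b_{\s}K_{\s}(\cdot,\boldsymbol w)\Big)K(\cdot,\boldsymbol w).$$
By the defining relation $1-\frac1K=\sum_{\s>\underline 0}b_{\s}K_{\s}$, the right-hand side equals $\frac{1}{K(\cdot,\boldsymbol w)}K(\cdot,\boldsymbol w)=1$.

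Two points need care here. First, the series converges strongly because $\boldsymbol M$ is a $\frac1K$-contraction by admissibility. Second, the pointwise identity must be justified: evaluating the strongly convergent operator series at $K(\cdot,\boldsymbol w)$ and then at a point $\boldsymbol z$ commutes with the sum, and the resulting scalar series converges by Proposition \ref{pp2}.

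\emph{Step 2: identify $\Delta_{\boldsymbol M}^2$ with $E_0$.} Since $a_{\underline 0}=1$, the constant function $1$ is $K(\cdot,\boldsymbol 0)$ and has norm $1$. Hence
$$E_0K(\cdot,\boldsymbol w)=\langle K(\cdot,\boldsymbol w),1\rangle 1=\overline{1(\boldsymbol w)}\cdot 1=1.$$
So $\Delta_{\boldsymbol M}^2$ and $E_0$ agree on the total set of kernel functions. Both are bounded, so they coincide. Since $E_0$ is a projection, its positive square root is itself, which gives $\Delta_{\boldsymbol M}=E_0$.

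\emph{Step 3: purity.} With $\Delta_{\boldsymbol M}^2=E_0=1\otimes 1$, the operator $\psi_\alpha^{\s}(\boldsymbol M)E_0\psi_\alpha^{\s}(\boldsymbol M)^*$ is the rank-one operator $\psi_\alpha^{\s}\otimes\psi_\alpha^{\s}$, i.e.\ $f\mapsto \langle f,\psi_\alpha^{\s}\rangle\psi_\alpha^{\s}$. Thus $S_N=\sum_{|\s|\le N}a_{\s}\sum_\alpha \psi_\alpha^{\s}\otimes\psi_\alpha^{\s}$.

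In $H_K$, the family $\{\sqrt{a_{\s}}\,\psi_\alpha^{\s}\}$ is an orthonormal basis. This holds because $K=\sum_{\s}a_{\s}\sum_\alpha\psi_\alpha^{\s}\overline{\psi_\alpha^{\s}}$ and the $\psi_\alpha^{\s}$ are linearly independent polynomials; a standard result of Aronszajn for kernels expanded in linearly independent functions then gives orthonormality. Therefore $S_N$ is the orthogonal projection onto polynomials of degree at most $N$, and it converges strongly to $I$. As a cross-check, Proposition \ref{lem:cha:2} already guarantees that the series converges to a contraction, and on kernel functions the limit is $\sum_{\s} a_{\s}K_{\s}(\cdot,\boldsymbol w)=K(\cdot,\boldsymbol w)$.

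\emph{Main obstacle.} The delicate step is Step 1: rigorously exchanging the strongly convergent operator series with point evaluation, so as to apply the formal identity $(1/K)\cdot K=1$. I would handle this through the reproducing property. Pair the partial sums with $K(\cdot,\boldsymbol z)$, which turns the operator series into the scalar series $\sum_{\s} b_{\s}K_{\s}(\boldsymbol z,\boldsymbol w)K(\boldsymbol z,\boldsymbol w)$, and this converges absolutely by Proposition \ref{pp2}.
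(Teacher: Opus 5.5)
Your proof is correct. Steps 1 and 2 follow the paper's argument for $\Delta_{\boldsymbol M}=E_0$. Both evaluate $\Delta_{\boldsymbol M}^2$ on kernel functions, use $1-\frac1K=\sum_{\underline s>\underline 0}b_{\underline s}K_{\underline s}$ to get $\Delta_{\boldsymbol M}^2K(\cdot,\boldsymbol w)=1=E_0K(\cdot,\boldsymbol w)$, and conclude by density. The interchange you flag as the main obstacle is immediate: point evaluations are continuous on $H_K$, and the operator series converges strongly by admissibility.

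For purity your route differs. The paper computes only $S_NK(\cdot,\boldsymbol w)=\sum_{|\underline s|\le N}a_{\underline s}K_{\underline s}(\cdot,\boldsymbol w)$. It then asserts that these partial sums converge to $K(\cdot,\boldsymbol w)$ in norm, and extends to all of $H_K$ by density of the span of kernel functions plus the uniform bound $\|S_N\|\le 1$ from Proposition \ref{lem:cha:2}. You instead identify $S_N$ exactly as the orthogonal projection onto $\bigoplus_{|\underline s|\le N}\mathcal P_{\underline s}$. Then $S_N\to I$ strongly is just completeness of $\{\sqrt{a_{\underline s}}\psi^{\underline s}_\alpha\}$, and Proposition \ref{lem:cha:2} is not needed. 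This is more explicit, but it rests entirely on the orthonormal-basis claim, whose justification needs tightening.

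Aronszajn's criterion requires $\ell^2$-independence, not merely linear independence of the polynomials: if $\sum c_{\underline s,\alpha}\sqrt{a_{\underline s}}\psi^{\underline s}_\alpha\equiv 0$ with $(c_{\underline s,\alpha})\in\ell^2$, then all $c_{\underline s,\alpha}=0$. This does hold here, and you can prove it as follows.
\begin{itemize}
\item By Cauchy--Schwarz and Dini's theorem applied to $\sum_{\underline s}a_{\underline s}K_{\underline s}(\boldsymbol z,\boldsymbol z)$, such a series converges locally uniformly on the circular domain $\Omega$.
\item You can therefore extract its degree-$n$ homogeneous component by integrating against $e^{-in\theta}$ along $\boldsymbol z\mapsto e^{i\theta}\boldsymbol z$.
\item Each component vanishes by linear independence of the $\psi^{\underline s}_\alpha$ with $|\underline s|=n$.
\end{itemize}

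By comparison, the paper's density argument needs only norm convergence of the kernel expansion. That holds even when the $\sqrt{a_{\underline s}}\psi^{\underline s}_\alpha$ merely form a Parseval frame, though the paper does not justify this step either.
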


\begin{proof}

Let $H_{K}^0 := \left\lbrace \sum_{i=1}^n c_i K_{\boldsymbol{w}_i} : n \geq 1,\,\,c_1,\ldots, c_n \in \mathbb C,\,\,\boldsymbol{w}_1, \ldots, \boldsymbol{w}_n \in \Omega\right\rbrace.$ The space $H_K^0$ is dense in $H_K$. Take an arbitrary element $\sum_{i=1}^n c_i K_{\boldsymbol{w}_i}$ of $H_K^0$. A straightforward calculation implies that 
$$\Delta_{\boldsymbol{M}}^2 \left(\sum_{i=1}^n c_i K_{\boldsymbol{w}_i}\right) = \left\langle \sum_{i=1}^n c_i K_{\boldsymbol{w}_i}, K_0\right\rangle K_0.$$
Since $H_K^0$ is a dense subset of $H$, it follows that $\Delta_{\boldsymbol{M}}$ is the projection $E_0.$

For $N \geq 1$, let
$S_N$ denote $\sum_{|\s| \leq N} a_{\s} \sum_{\alpha=1}^{d_{\s}} \psi_{\alpha}^{\s}(\boldsymbol{M}) \Delta_{\boldsymbol{M}}^2 \psi_{\alpha}^{\s}(\boldsymbol{M})^*$. Suppose $\boldsymbol{w} \in \Omega$ be an arbitrary element. Since the sequence of partial sums $\{\sum_{|\s| \leq N} a_{\s} \sum_{\alpha=1}^{d_{\s}} \overline{\psi_{\alpha}^{\s}(\boldsymbol{w})} \psi_{\alpha}^{\s}(\cdot)\}_{N \geq 1}$ converges to $K_{\boldsymbol{w}}$ in the norm topology, given an $\epsilon > 0$, there exists a natural number $m$ such that 
\begin{equation}\label{ch1:eqn:ker-converge}
 \|K_{\boldsymbol{w}} - \sum_{|\s| \leq N} a_{\s} \sum_{\alpha=1}^{d_{\s}} \overline{\psi_{\alpha}^{\s}(\boldsymbol{w})} \psi_{\alpha}^{\s}(\cdot)\| < \epsilon   
\end{equation}
holds for every $N \geq m$. A straightforward computation implies that 
\begin{equation}\label{ch1:eqn:lem-pure}
S_N K_{\boldsymbol{w}} = \sum_{|\s| \leq N} a_{\s} \sum_{\alpha=1}^{d_{\s}} \overline{\psi_{\alpha}^{\s}(\boldsymbol{w})} \psi_{\alpha}^{\s}(\cdot).    
\end{equation}
Combining Equations \eqref{ch1:eqn:ker-converge} and \eqref{ch1:eqn:lem-pure}, we obtain 
$$\|K_{\boldsymbol{w}} - S_{N} K_{\boldsymbol{w}}\| < \epsilon$$
holds for every $N \geq m$. This implies that the sequence $\{S_NK_{\boldsymbol{w}}\}$ converges to $K_{\boldsymbol{w}}$. Consequently, for each $h$ in $H_K^0$, the sequence $\{S_N h\}$ is convergent. Combining the facts that $H_K^0$ is a dense subset of $H_K$ and that the sequence $\{\|S_N\|\}$ is bounded - a consequence of Proposition \ref{lem:cha:2} - we obtain that $\{S_N\}$ converges to the identity operator in the strong operator topology of $H_K$.
\end{proof}

 The following theorem is due to Arazy and Engli\v{s}. For the proof of the following theorem, see \cite[Theorem 1.3]{Arazy2}.

\begin{theorem}\label{isometry}
Let $\boldsymbol{T} = (T_1,\ldots, T_d)$ be a commuting tuple of operators on a Hilbert space $\mathcal{H}$. If $\boldsymbol{T}$ is a pure $\frac{1}{K}$-contraction, then the map $V_T:\mathcal H\to H_K\otimes \overline{\text{Ran}}\,\,\Delta_{\boldsymbol{T}}$, given by
$$ h \to  \displaystyle \sum_{\s > \underline 0} \sum_{\alpha = 1}^{d_{\s}} a_{\s} \psi_{\alpha}^{\s} \otimes \Delta_{\boldsymbol{T}} \left(\psi^{\s}_{\alpha} (\boldsymbol{T})\right)^*h,$$
is an isometry satisfying
$$V_{\boldsymbol{T}}^* \left(p(\boldsymbol{M}) \otimes I_{\overline{\mbox{Ran}}\,\,\Delta_{\boldsymbol{T}}}\right) = p(\boldsymbol{T}) V_{\boldsymbol{T}}^*$$
for all $p\in \mathbb C[z_1, \ldots, z_d]$.
\end{theorem}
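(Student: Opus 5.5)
The plan is to show that $V_{\boldsymbol T}$ preserves norms and then deduce the intertwining relation from the reproducing property together with the product formula \eqref{exist:eqn1 b_s}. (The sum in $V_{\boldsymbol T}$ should run over $\s \geq \underline 0$, as in Corollary \ref{cor:vt contraction}; the $\underline 0$ term $1 \otimes \Delta_{\boldsymbol T} h$ is needed.)

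\emph{Isometry.} The family $\{\sqrt{a_{\s}}\,\psi_\alpha^{\s}\}_{\s,\alpha}$ is an orthonormal basis of $H_K$. This holds because $K=\sum_{\s} a_{\s}\sum_\alpha \psi_\alpha^{\s}\overline{\psi_\alpha^{\s}}$ and all $a_{\s}>0$, and a kernel expanded as $\sum e_j\overline{e_j}$ with linearly independent $e_j$ has $\{e_j\}$ as an orthonormal basis. Writing $a_{\s}\psi_\alpha^{\s}=\sqrt{a_{\s}}\,(\sqrt{a_{\s}}\psi_\alpha^{\s})$, we get
$$\|V_{\boldsymbol T}h\|^2=\sum_{\s}a_{\s}\sum_\alpha\|\Delta_{\boldsymbol T}\psi_\alpha^{\s}(\boldsymbol T)^*h\|^2=\lim_N\langle S_Nh,h\rangle ,$$
where $S_N$ is the partial sum from Proposition \ref{lem:cha:2}. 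Purity says exactly that $S_N\to I$ strongly, so $\|V_{\boldsymbol T}h\|=\|h\|$. Well-definedness, meaning convergence of the series in $H_K\otimes\overline{\mathrm{Ran}}\,\Delta_{\boldsymbol T}$, follows from the same orthogonality and the summability of these norms. This part is routine.

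\emph{Intertwining.} By linearity and multiplicativity it suffices to treat $p=z_i$, and to prove the adjoint form $V_{\boldsymbol T}T_i^*=(M_{z_i}^*\otimes I)V_{\boldsymbol T}$. Evaluate both sides at a point $\boldsymbol w$, using the identity $(M_{z_i}^*f)$ paired against kernels. The cleaner route is to compute $V_{\boldsymbol T}$ as a function: the value at $\boldsymbol z$ is
$$(V_{\boldsymbol T}h)(\boldsymbol z)=\Delta_{\boldsymbol T}\sum_{\s}a_{\s}\sum_\alpha\psi_\alpha^{\s}(\boldsymbol z)\psi_\alpha^{\s}(\boldsymbol T)^*h=\Delta_{\boldsymbol T}K(\boldsymbol z,\boldsymbol T)h,$$
a formal $K(\boldsymbol z,\boldsymbol T^*)$-type expression. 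Then $\langle V_{\boldsymbol T}h, K_{\boldsymbol w}\otimes\xi\rangle=\langle h, K(\boldsymbol T,\boldsymbol w)\Delta_{\boldsymbol T}\xi\rangle$, and since $M_{z_i}^*K_{\boldsymbol w}=\overline{w_i}K_{\boldsymbol w}$, the adjoint identity reduces to $V_{\boldsymbol T}^*(K_{\boldsymbol w}\otimes\xi)=K(\boldsymbol T,\boldsymbol w)\Delta_{\boldsymbol T}\xi$. We then need
$$T_iK(\boldsymbol T,\boldsymbol w)\Delta_{\boldsymbol T}\xi=\overline{w_i}\cdot\text{(same)} \text{ after applying } V^*\text{ to } M_{z_i}(\cdot),$$
which at the level of polynomials amounts to the identity $V_{\boldsymbol T}^*(q\otimes\xi)=q(\boldsymbol T)\Delta_{\boldsymbol T}\xi$ for polynomials $q$.

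So the key step is to prove $V_{\boldsymbol T}^*(\psi_\beta^{\underline p}\otimes\xi)=\psi_\beta^{\underline p}(\boldsymbol T)\Delta_{\boldsymbol T}\xi$. Pair with $h$: the left side gives $\langle\psi_\beta^{\underline p}\otimes\xi,V_{\boldsymbol T}h\rangle=\|\psi_\beta^{\underline p}\|^2a_{\underline p}\langle\xi,\Delta_{\boldsymbol T}\psi_\beta^{\underline p}(\boldsymbol T)^*h\rangle$, and $\|\psi_\beta^{\underline p}\|_{H_K}^2=1/a_{\underline p}$, which matches the right side. Since each $z_i\psi_\beta^{\underline p}$ is a polynomial, the relation $V^*(p(\boldsymbol M)\otimes I)=p(\boldsymbol T)V^*$ follows on $\mathcal P\otimes\overline{\mathrm{Ran}}\,\Delta_{\boldsymbol T}$. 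Polynomials are dense in $H_K$, and all operators involved are bounded ($M_{z_i}$ is bounded by admissibility and Lemma \ref{bounded}), so the relation extends to the whole space.

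The main obstacle is making the density and boundedness assumptions explicit. Boundedness of $\boldsymbol M$ is not automatic for a general $K$. I would assume $K$ admissible, or alternatively note that only bounded $p(\boldsymbol M)$ on the dense polynomial subspace is needed.
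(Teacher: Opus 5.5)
\textbf{Verdict.} Your argument is correct. The paper proves nothing here: it simply cites Arazy--Engli\v{s} \cite[Theorem 1.3]{Arazy2}. You give a self-contained proof along the standard lines, which is more transparent than the citation. Your correction that the sum defining $V_{\boldsymbol T}$ must run over $\s\geq\underline 0$ is also right. Without the term $1\otimes\Delta_{\boldsymbol T}h$ the map vanishes for the pure tuple $\boldsymbol T=0$. The paper itself uses $V_{\boldsymbol T}h=1\otimes h$ in the proof of Theorem \ref{cnp main}.

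\textbf{How your proof works.} The ingredients are as follows.
\begin{itemize}
\item $\{\sqrt{a_{\s}}\,\psi^{\s}_\alpha\}$ is an orthonormal basis of $H_K$.
\item $\|V_{\boldsymbol T}h\|^2=\lim_N\langle S_Nh,h\rangle$, with $S_N$ the partial sums of Proposition \ref{lem:cha:2}, so the isometry property is literally the purity hypothesis.
\item $V_{\boldsymbol T}^*(q\otimes\xi)=q(\boldsymbol T)\Delta_{\boldsymbol T}\xi$ for polynomials $q$, the polynomial analogue of Lemma \ref{lem:V_T*}.
\item Multiplicativity $(pq)(\boldsymbol T)=p(\boldsymbol T)q(\boldsymbol T)$ and density of $\mathcal P(Z)\otimes\overline{\mathrm{Ran}}\,\Delta_{\boldsymbol T}$ then finish the intertwining.
\end{itemize}

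\textbf{Points to tidy up.}
\begin{itemize}
\item \emph{Orthonormal basis.} Mere linear independence of the $e_j$ does not make an expansion $K=\sum_j e_j\overline{e_j}$ an orthonormal basis: a Parseval frame can be finitely independent without being a basis. What you need is that $\sum_j c_je_j=0$ with $(c_j)\in\ell^2$ forces $c=0$. Here this follows by splitting into homogeneous components. Alternatively, quote the Peter--Weyl description $H_K=\bigoplus_{\s}\mathcal P_{\s}$ with $\|f\|_{H_K}^2=a_{\s}^{-1}\|f\|_F^2$ on $\mathcal P_{\s}$.
\item \emph{The $K(\boldsymbol z,\boldsymbol T)$ paragraph.} This paragraph can be deleted. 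The displayed relation $T_iK(\boldsymbol T,\boldsymbol w)\Delta_{\boldsymbol T}\xi=\overline{w_i}\cdot(\ldots)$ confuses $M_{z_i}$ with $M_{z_i}^*$, and nothing later depends on it.
\item \emph{Boundedness of the $M_{z_i}$.} This is a genuine hypothesis, not a consequence of Lemma \ref{bounded}, which covers only CNP kernels. The paper only ever applies this theorem to admissible $K$. Without boundedness, your argument still proves the intertwining identity on the dense subspace $\mathcal P(Z)\otimes\overline{\mathrm{Ran}}\,\Delta_{\boldsymbol T}$, which is the right reading of the statement in that case.
\end{itemize}
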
 

We recall the definition of a $(K, \boldsymbol{ T})$-factorable positive operator from \cite[Definition 2.3]{Tiret}. Suppose $K$ is an admissible kernel and $\boldsymbol{T} = (T_1, \ldots, T_d)$ is a commuting $d$-tuple of operators on $\mathcal{H}$. A positive operator $X \in \mathcal{B}(\mathcal{H})$ is said to be $(K, \boldsymbol{T})$-factorable if $X$ has closed range and there exists a Hilbert space $\mathcal{L}$ with a bounded linear transformation $\Theta : H_{K} \otimes \mathcal{L} \to \mathcal{H}$ such that $X = \Theta \Theta^*$ and $\Theta \left(M_{z_i} \otimes I_{\mathcal{L}} \right) = T_i \Theta$ holds for each $i$. The following proposition is the key ingredient of the main theorem of this section. Both the proposition and proof are motivated from \cite[Proposition 2.4]{Tiret}.

\begin{proposition}\label{prop 4.2}
  $X \in \mathcal{B}(\mathcal{H})$ is $(K, \boldsymbol{T})$-factorable if and only if
  \begin{enumerate}
      \item for all $i, \|M_{z_i}\|^2X- T_iXT_i^*\geq 0.$
      \item $P_{\boldsymbol T}(X)=\displaystyle \sum_{\s > \underline 0} b_{\s} \sum_{\alpha=1}^{d_{\s}} \psi_\alpha^{\s}(\boldsymbol{T}) X \psi_\alpha^{\s}(\boldsymbol{T})^*$ converges strongly such that $X-P_{\boldsymbol T}(X)\geq 0.$
      \item $\displaystyle \sum_{\s \geq \underline 0} a_{\s} \sum_{\alpha=1}^{d_{\s}} \psi_\alpha^{\s}(\boldsymbol{T}) \left (X- P_{\boldsymbol T}(X)\right) \psi_\alpha^{\s}(\boldsymbol{T})^*$ converges to $X.$
  \end{enumerate}
\end{proposition}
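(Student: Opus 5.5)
Both directions follow the Drury--Arveson/unitarily invariant template of \cite[Proposition 2.4]{Tiret}, with the power series in $\langle \boldsymbol z,\boldsymbol w\rangle$ replaced by the Peter--Weyl sums $\sum_{\s} b_{\s}\sum_\alpha \psi^{\s}_\alpha\overline{\psi^{\s}_\alpha}$. Throughout, $K$ is admissible, so the $M_{z_i}$ are bounded and $\boldsymbol M$ is a $\frac1K$-contraction with $\Delta_{\boldsymbol M}=E_0$ (Lemma \ref{lem2.1}).

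\emph{Necessity.} Suppose $X=\Theta\Theta^*$ with $\Theta(M_{z_i}\otimes I_{\mathcal L})=T_i\Theta$. Then $p(\boldsymbol T)\Theta=\Theta\,(p(\boldsymbol M)\otimes I)$ for every polynomial $p$.
\begin{itemize}
\item Condition (1) follows from
\[
T_iXT_i^*=\Theta\,(M_{z_i}M_{z_i}^*\otimes I)\,\Theta^*\le \|M_{z_i}\|^2\,\Theta\Theta^*.
\]
\item For (2), write $Q:=I-\sum_{\s>\underline 0} b_{\s}\sum_\alpha \psi^{\s}_\alpha(\boldsymbol M)\psi^{\s}_\alpha(\boldsymbol M)^*$. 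Admissibility says this series converges strongly and $Q=E_0\ge 0$. Conjugating the partial sums by $\Theta(\cdot\otimes I)\Theta^*$ and using $\|\Theta\|<\infty$, the series $P_{\boldsymbol T}(X)$ converges strongly. Moreover
\[
X-P_{\boldsymbol T}(X)=\Theta\,(E_0\otimes I)\,\Theta^*\ge 0 .
\]
\item For (3), the same conjugation turns the series in (3) into $\Theta\,(S_N^{\boldsymbol M}\otimes I)\,\Theta^*$, where $S_N^{\boldsymbol M}$ are the partial sums for $\boldsymbol M$. By Lemma \ref{lem2.1} (purity of $\boldsymbol M$), $S_N^{\boldsymbol M}\to I$ strongly, so the series converges to $\Theta\Theta^*=X$.
\end{itemize}
The only care needed is that strong convergence survives tensoring with $I_{\mathcal L}$ and conjugation by bounded maps. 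This holds because the partial sums are uniformly bounded (Remark \ref{rem 1} for the positive series; for the $b_{\s}$ series I would use monotonicity when $b_{\s}\ge0$, or otherwise argue directly on the dense span of $K_{\boldsymbol w}\otimes\ell$).

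\emph{Sufficiency.} Put $D:=(X-P_{\boldsymbol T}(X))^{1/2}$ and $\mathcal L:=\overline{\mathrm{Ran}}\,D$. Imitating $V_{\boldsymbol T}$ from Corollary \ref{cor:vt contraction}, define
\[
Wh:=\sum_{\s\ge\underline 0}a_{\s}\sum_\alpha \psi^{\s}_\alpha\otimes D\,\psi^{\s}_\alpha(\boldsymbol T)^*h .
\]
Since $\|\psi^{\s}_\alpha\otimes\cdot\|$ carries the weight $a_{\s}^{-1}$ in $H_K$ (its norm there is fixed by $K=\sum a_{\s}K_{\s}$, i.e.\ $\{\sqrt{a_{\s}}\psi^{\s}_\alpha\}$ is orthonormal), we get
\[
\|Wh\|^2=\sum_{\s} a_{\s}\sum_\alpha\langle \psi^{\s}_\alpha(\boldsymbol T)D^2\psi^{\s}_\alpha(\boldsymbol T)^*h,h\rangle=\langle Xh,h\rangle
\]
by (3). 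Hence $W$ is bounded and $W^*W=X$. Set $\Theta:=W^*$; then $\Theta\Theta^*=X$.

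For the intertwining $\Theta(M_{z_i}\otimes I)=T_i\Theta$, equivalently $(M_{z_i}^*\otimes I)W=WT_i^*$, I would test against $K_{\boldsymbol w}\otimes\ell$, exactly as in the proof of \cite[Theorem 1.3]{Arazy2} quoted as Theorem \ref{isometry}. Note $W^*(K_{\boldsymbol w}\otimes\ell)=\sum_{\s}a_{\s}\sum_\alpha\overline{\psi^{\s}_\alpha(\boldsymbol w)}\psi^{\s}_\alpha(\boldsymbol T)D\ell$, which formally is ``$K(\boldsymbol T,\boldsymbol w)D\ell$''. Multiplying by $T_i$ corresponds to multiplying $K_{\boldsymbol w}$ by $z_i$, because both sides are given by the same polynomial functional calculus. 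Thus $T_iW^*=W^*(M_{z_i}\otimes I)$ holds on the dense set of finite combinations $\sum_j p_j\otimes\ell_j$, and extends by boundedness; condition (1) guarantees the relevant partial sums stay bounded.

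\emph{Main obstacle.} Closed range of $X$ is part of the definition of factorable, yet it is not visibly among conditions (1)--(3). I would follow \cite{Tiret} on this point, treating closed range as implicitly assumed, or checking that it is handled identically there. The genuine analytic difficulty is justifying the interchange of the series defining $W^*(K_{\boldsymbol w}\otimes\ell)$ with multiplication by $T_i$, since a priori we only have strong convergence. I would reduce this to polynomial test vectors $p\otimes\ell$, for which all sums are finite, and use density of polynomials in $H_K$; that density holds because the $\psi^{\s}_\alpha$ span a dense subspace.
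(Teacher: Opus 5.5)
Your necessity argument is the paper's. For sufficiency you take a genuinely different, more direct route. The paper first builds an auxiliary tuple on $\mathrm{Ran}\,X$ via $A_iX^{1/2}h=X^{1/2}T_i^*h$, and uses (1) to make the $A_i$ bounded. It then checks with (2) and (3) that $\boldsymbol A^*$ is a pure $\frac1K$-contraction, applies Theorem \ref{isometry} as a black box, and sets $\Theta=X^{1/2}V^*$. You instead write the dilation out directly. With $D=(X-P_{\boldsymbol T}(X))^{1/2}$, orthonormality of $\{\sqrt{a_{\underline s}}\,\psi^{\underline s}_\alpha\}$ in $H_K$ together with (3) gives $W^*W=X$. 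Moreover $W^*(p\otimes\ell)=p(\boldsymbol T)D\ell$ for every polynomial $p$, so the intertwining is a finite computation on a dense set, extended using boundedness of $W^*$, $T_i$ and $M_{z_i}\otimes I$.

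The two constructions agree up to a unitary change of coefficient space. Since $\|\Delta_{\boldsymbol A^*}X^{1/2}h\|=\|Dh\|$, there is a unitary $U:\overline{\mathrm{Ran}}\,D\to\overline{\mathrm{Ran}}\,\Delta_{\boldsymbol A^*}$ with $UD=\Delta_{\boldsymbol A^*}X^{1/2}$, and then $VX^{1/2}=(I\otimes U)W$.

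Your version is self-contained and never has to treat $\mathrm{Ran}\,X$ as a Hilbert space. It also shows that condition (1) is not needed for sufficiency at all; (1) then follows from the factorization. So your remark that (1) ``keeps the partial sums bounded'' is superfluous, since no series appear once you test on $p\otimes\ell$. The paper's version buys the structural reformulation that factorability of $X$ amounts to the induced tuple $\boldsymbol A^*$ on $\overline{\mathrm{Ran}}\,X$ being a pure $\frac1K$-contraction, at the cost of relying on Theorem \ref{isometry}.

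Two minor points. First, in the necessity part, uniform boundedness of the partial sums of the $b_{\underline s}$-series is automatic by Banach--Steinhaus, because they converge strongly on $H_K$. So no sign condition on $b_{\underline s}$ is required to pass to $\otimes I_{\mathcal L}$. Second, you are right that closed range cannot come out of (1)--(3). Take $\boldsymbol T=\boldsymbol M\otimes I_{\mathcal L}$ and $\Theta=I_{H_K}\otimes C$ with $C\geq 0$ injective and compact on an infinite-dimensional $\mathcal L$. Then $X=I\otimes C^2$ satisfies (1)--(3) but does not have closed range. So the closed-range clause in the definition has to be dropped for the equivalence to hold. The paper's proof does not establish it either, and your construction does not use it.
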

\begin{proof}
  Let $X$ be $(K, \boldsymbol{T})$-factorable, that is, there exists a Hilbert space $\mathcal{L}$ with a bounded linear transformation $\Theta : H_{K} \otimes \mathcal{L} \to \mathcal{H}$ such that $X = \Theta \Theta^*$ and $\Theta \left(M_{z_i} \otimes I_{\mathcal{L}} \right) = T_i \Theta$ holds for each $i$.   Then \begin{align*}
      \|M_{z_i}\|^2X- T_iXT_i^*&= \|M_{z_i}\|^2\Theta\Theta^*- T_i\Theta\Theta^*T_i^*\\
      &=\|M_{z_i}\|^2\Theta\Theta^*- \Theta\left(M_{z_i} \otimes I_{\mathcal{L}} \right)\left(M_{z_i}^* \otimes I_{\mathcal{L}} \right)\Theta^* \\
      &=\Theta\Big(\|M_{z_i}\|^2 I- \left(M_{z_i} \otimes I_{\mathcal{L}} \right)\left(M_{z_i}^* \otimes I_{\mathcal{L}} \right)\Big)\Theta^* \geq 0.
  \end{align*}
This proves $(1).$ The following computation implies (2):
  \begin{align*}
      X-P_{\boldsymbol T}(X)&=X-\displaystyle \sum_{\s > \underline 0} b_{\s} \sum_{\alpha=1}^{d_{\s}} \psi_\alpha^{\s}(\boldsymbol{T}) X \psi_\alpha^{\s}(\boldsymbol{T})^*\\
      &=\Theta\big(I-\displaystyle \sum_{\s > \underline 0} b_{\s} \sum_{\alpha=1}^{d_{\s}} (\psi_\alpha^{\s}(\boldsymbol M) \otimes I_{\mathcal{L}}) ( \psi_\alpha^{\s}(\boldsymbol M)^* \otimes I_{\mathcal{L}})\Big)\Theta^*\geq 0.
  \end{align*}
Also, observe that 
  
\begin{align*}
    &\displaystyle \sum_{|\s| = 0}^{N} a_{\s}\sum_{\alpha=1}^{d_{\s}} \psi_\alpha^{\s}(\boldsymbol{T}) (X- P_{\boldsymbol T}(X))\psi_\alpha^{\s}(\boldsymbol{T})^* \\
    &= \displaystyle \sum_{|\s| = 0}^{N} a_{\s} \sum_{\alpha=1}^{d_{\s}} \psi_\alpha^{\s}(\boldsymbol{T}) (\Theta\Theta^*- P_{\boldsymbol T}(\Theta\Theta^*))\psi_\alpha^{\s}(\boldsymbol{T})^* \\ 
    &= \Theta\left[\displaystyle \sum_{|\s| = 0}^{N} a_{\s} \sum_{\alpha=1}^{d_{\s}} (\psi_\alpha^{\s}(\boldsymbol M \otimes I_{\mathcal{L}}) \psi_\alpha^{\s}(\boldsymbol M \otimes I_{\mathcal{L}})^* - \displaystyle \sum_{|\s| = 0}^{N} \sum_{\alpha = 1}^{d_{\s}} a_{\s} \psi_\alpha^{\s}(\boldsymbol M \otimes I_{\mathcal{L}})\right.\\
    &\phantom{ga} \left. \left(\displaystyle \sum_{\tilde{\s}>\underline 0} b_{\tilde{\s}} \sum_{\alpha=1}^{d_{\tilde{\s}}} (\psi_{\alpha}^{\tilde{\s}}(\boldsymbol M \otimes I_{\mathcal{L}}) \psi_{\alpha}^{\tilde{\s}}(\boldsymbol M \otimes I_{\mathcal{L}})^* \right) \psi_{\alpha}^{\s} (\boldsymbol{M} \otimes I)^*\right] \Theta^*.
\end{align*}
Since $\boldsymbol{M}$ is a pure $\frac{1}{K}$-contraction, it follows that 
\begin{align*}
    &\left[\displaystyle \sum_{|\s| = 0}^{N} a_{\s} \sum_{\alpha=1}^{d_{\s}} \psi_\alpha^{\s}(\boldsymbol M \otimes I_{\mathcal{L}}) \psi_\alpha^{\s}(\boldsymbol M \otimes I_{\mathcal{L}})^* - \displaystyle \sum_{|\s| = 0}^{N} \sum_{\alpha = 1}^{d_{\s}} a_{\s} \psi_\alpha^{\s}(\boldsymbol M \otimes I_{\mathcal{L}})\right.\\
    &\phantom{ga} \left. \left(\displaystyle \sum_{\tilde{\s}>\underline 0} b_{\tilde{\s}} \sum_{\alpha=1}^{d_{\tilde{\s}}} \psi_\alpha^{\tilde{\s}}(\boldsymbol M \otimes I_{\mathcal{L}}) \psi_\alpha^{\tilde{\s}}(\boldsymbol M \otimes I_{\mathcal{L}})^* \right) \psi_{\alpha}^{\s} (\boldsymbol{M} \otimes I)^*\right]
\end{align*}
converges to $I_{H_K \otimes \mathcal{L}}$ in the strong operator topology and therefore, $(3)$ holds.

  Conversely, let us assume $(1), (2)$, and $(3).$
For each $i=1, \ldots, d,$ define the operator $A_i$ on the range of $X$, $\mbox{Ran}\,X$, by
  $A_i X^{1/2}h = X^{1/2}T_i^*h,$ $h\in \mathcal H.$ It is easy to see that $\boldsymbol{A}=(A_1, \ldots, A_d)$ is a commuting tuple of bounded linear operators.
  Now, using the condition $(2)$, we get
  \begin{align*}
      &\Big\langle  \displaystyle \sum_{\s > \underline 0} b_{\s} \sum_{\alpha=1}^{d_{\s}} \psi_\alpha^{\s}(\boldsymbol{A}^*) \psi_\alpha^{\s}(\boldsymbol{A})X^{1/2}h, X^{1/2}h\Big\rangle 
      \\&\phantom{xx}=\Big\langle  \displaystyle \sum_{\s > \underline 0} b_{\s} \sum_{\alpha=1}^{d_{\s}} \psi_\alpha^{\s}(\boldsymbol{T}^*)X \psi_\alpha^{\s}(\boldsymbol{T})h, h\Big\rangle \leq \Big\langle X h, h\Big\rangle = \Big\langle X^{1/2} h, X^{1/2}h\Big\rangle.
  \end{align*}
  Thus $\boldsymbol{A}^*$ is a $\frac{1}{K}$-contraction on $\text{Ran}\,\,X.$
  Also, a similar argument with condition $(3)$ shows that $\boldsymbol{A}^*$ is pure:
  \begin{align*}
      &\Big\langle  \displaystyle \sum_{\s \geq \underline 0} a_{\s} \sum_{\alpha=1}^{d_{\s}} \psi_\alpha^{\s}(\boldsymbol{A}^*)\Delta_{\boldsymbol{A}^*}^2 \psi_\alpha^{\s}(\boldsymbol{A})X^{1/2}h, X^{1/2}h\Big\rangle\\&\phantom{xx}=
      \Big\langle  \displaystyle \sum_{\s \geq \underline 0} a_{\s} \sum_{\alpha=1}^{d_{\s}} \psi_\alpha^{\s}(\boldsymbol{T}^*)\left (X- P_{\boldsymbol T(X)}\right) \psi_\alpha^{\s}(\boldsymbol{T})h, h\Big\rangle = \Big\langle Xh, h\Big\rangle.
  \end{align*}
  Now Theorem \ref{isometry} yields there exists an isometry $V:\text{Ran}\,\,X\to H_K\otimes \overline{\text{Ran}}\,\,\Delta_{\boldsymbol A}$ such that $$V^* \left(M_i\otimes I_{\overline{\mbox{Ran}}\,\, \Delta_{\boldsymbol A}}\right) = A_i V^*.$$
  Finally, if we define $\Theta=X^{1/2}V^*$ it is easy to check $X = \Theta \Theta^*$ and for each $i$ $$\Theta \left(M_i\otimes I_{\overline{\mbox{Ran}}\,\,\Delta_{\boldsymbol A}}\right) = A_i \Theta.$$  \end{proof}

Below, we provide the definition of the characteristic function of a $\frac{1}{K}$-contraction, following \cite[Definition 2.5]{Tiret}. The definition of the characteristic function of a $\frac{1}{K}$-contraction $\boldsymbol{T}$ in \cite[Definition 2.5]{Tiret} is given by assuming that $\boldsymbol{T}$ is pure. In our context, we give the definition of the characteristic function for any $\frac{1}{K}$-contraction.

\begin{definition}\label{def:existence of cha fun}
Let $\boldsymbol{T} = (T_1, \ldots, T_d)$ be a $\frac{1}{K}$-contraction. The tuple $\boldsymbol{T}$ is said to admit a characteristic function if there exists a Hilbert space $\mathcal{E}$ and an analytic function $\Theta_{\boldsymbol{T}} : \Omega \to \mathcal{B}(\mathcal{E}, \overline{\mbox{Ran}}\,\, \Delta_{\boldsymbol{T}})$ such that the corresponding multiplication operator $M_{\Theta_{\boldsymbol{T}}} : H_{K} \otimes \mathcal{E} \to H_{K} \otimes \overline{\mbox{Ran}}\,\, \Delta_{\boldsymbol{T}}$ fulfills the following identity
$$I - V_{\boldsymbol{T}} V_{\boldsymbol{T}}^* = M_{\Theta_{\boldsymbol{T}}} M_{\Theta_{\boldsymbol{T}}}^*.$$
\end{definition}

If $K$ is an admissible kernel, it follows from Theorem \ref{isometry} that $\ker V_{\boldsymbol{T}}^*$ is an invariant subspace for each $M_{z_i} \otimes I_{\overline{\mbox{\tiny{Ran}}}\,\,\Delta_{\boldsymbol{T}}}$, $1 \leq i \leq d$.

\begin{definition}\label{Def-BT}
    For a pure $\frac{1}{K}$-contraction $\boldsymbol T= (T_1, \ldots, T_d),$ associated $d$-tuple of commuting operators $B_{\boldsymbol T}$ is defined as
\begin{equation}\label{Eqn:Def-BT}
B_{\boldsymbol T}:=\Big((M_{z_1}\otimes I_{\overline{\mbox{\tiny{Ran}}}\,\,\Delta_{\boldsymbol{T}}})|_{\ker V_{\boldsymbol T}^*}, \ldots, (M_{z_d}\otimes I_{\overline{\mbox{\tiny{Ran}}}\,\,\Delta_{\boldsymbol{T}}})|_{\ker V_{\boldsymbol T}^*}\Big).    
\end{equation}    
\end{definition}

The theorem below is one of the important theorems of this section. It gives a necessary and sufficient criterion for the existence of the characteristic function of a pure $\frac{1}{K}$-contraction.

\begin{theorem}\label{B_T}
    Let $K$ be an admissible $\mathbb K$-invariant kernel over the Cartan domain $\Omega.$ A pure $\frac{1}{K}$-contraction $\boldsymbol T= (T_1, \ldots, T_d)$ on a Hilbert Space $\mathcal H$ admits a characteristic function if and only if $B_{\boldsymbol T}$ is a $\frac{1}{K}$-contraction.
\end{theorem}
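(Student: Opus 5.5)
Since $\boldsymbol T$ is pure, Theorem \ref{isometry} makes $V_{\boldsymbol T}$ an isometry. Hence $I - V_{\boldsymbol T}V_{\boldsymbol T}^*$ is the orthogonal projection $P_{\mathcal N}$ of $H_K\otimes\overline{\mbox{Ran}}\,\Delta_{\boldsymbol T}$ onto $\mathcal N:=\ker V_{\boldsymbol T}^*$. This subspace is invariant under each $M_{z_i}\otimes I$, so $B_{\boldsymbol T}$ is well defined. The plan is to apply Proposition \ref{prop 4.2} to the pair $(X,\boldsymbol S)$ with $X=P_{\mathcal N}$ and $\boldsymbol S=\boldsymbol M\otimes I$ acting on $H_K\otimes\overline{\mbox{Ran}}\,\Delta_{\boldsymbol T}$. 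The aim is to show that existence of a characteristic function is equivalent to $P_{\mathcal N}$ being $(K,\boldsymbol M\otimes I)$-factorable, and that this factorability is equivalent to $B_{\boldsymbol T}$ being a $\frac1K$-contraction.

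\emph{First step: characteristic function $\Leftrightarrow$ factorability.} If $\Theta_{\boldsymbol T}$ exists, set $\Theta=M_{\Theta_{\boldsymbol T}}$. Multiplication operators intertwine $M_{z_i}\otimes I_{\mathcal E}$ with $M_{z_i}\otimes I$, and the range of $P_{\mathcal N}$ is closed, so $P_{\mathcal N}$ is factorable. Conversely, suppose $P_{\mathcal N}=\Theta\Theta^*$ with $\Theta(M_{z_i}\otimes I_{\mathcal L})=(M_{z_i}\otimes I)\Theta$. An operator $H_K\otimes\mathcal L\to H_K\otimes\overline{\mbox{Ran}}\,\Delta_{\boldsymbol T}$ that intertwines the coordinate multipliers is itself a multiplier. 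To see this, evaluate $\Theta^*$ on $K_{\boldsymbol w}\otimes \xi$: intertwining gives $\Theta^*(K_{\boldsymbol w}\otimes\xi)=K_{\boldsymbol w}\otimes\Theta(\boldsymbol w)^*\xi$, using that the $K_{\boldsymbol w}$ are joint eigenvectors of $\boldsymbol M^*$. Here irreducibility of $K$ (Lemma \ref{irreducibility}), together with the fact that $\Delta_{\boldsymbol M}=E_0$ so the joint eigenspaces are one-dimensional, identifies the eigenvectors. This gives an analytic $\Theta_{\boldsymbol T}$ with $M_{\Theta_{\boldsymbol T}}=\Theta$, and so a characteristic function with $\mathcal E=\mathcal L$.

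\emph{Second step: factorability $\Leftrightarrow$ $B_{\boldsymbol T}$ is a $\frac1K$-contraction.} Because $\mathcal N$ is invariant, $p(\boldsymbol S)P_{\mathcal N}=P_{\mathcal N}p(\boldsymbol S)P_{\mathcal N}$. Consequently
$$\psi^{\s}_\alpha(\boldsymbol S)P_{\mathcal N}\psi^{\s}_\alpha(\boldsymbol S)^*=\psi^{\s}_\alpha(B_{\boldsymbol T})\psi^{\s}_\alpha(B_{\boldsymbol T})^*\oplus 0$$
with respect to $\mathcal N\oplus\mathcal N^\perp$. Condition (2) of Proposition \ref{prop 4.2} therefore says exactly that $P_{\boldsymbol S}(P_{\mathcal N})$ converges strongly and $P_{\mathcal N}-P_{\boldsymbol S}(P_{\mathcal N})\ge0$, that is, $I_{\mathcal N}-\sum_{\s>\underline 0}b_{\s}\sum_\alpha\psi^{\s}_\alpha(B_{\boldsymbol T})\psi^{\s}_\alpha(B_{\boldsymbol T})^*\ge0$. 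In other words, $B_{\boldsymbol T}$ is a $\frac1K$-contraction. Condition (1) holds automatically: $\|M_{z_i}\|^2P_{\mathcal N}-S_iP_{\mathcal N}S_i^*$ compresses to $\|M_{z_i}\|^2I_{\mathcal N}-B_iB_i^*\ge0$, since $\|B_i\|\le\|M_{z_i}\|$. So factorability implies that $B_{\boldsymbol T}$ is a $\frac1K$-contraction.

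\emph{Main obstacle: condition (3) for the converse.} Assuming $B_{\boldsymbol T}$ is a $\frac1K$-contraction, condition (3) says precisely that $B_{\boldsymbol T}$ is pure. I would deduce this from purity of $\boldsymbol S$, which holds by Lemma \ref{lem2.1} tensored with the identity. Write $Q_N:=\sum_{|\s|>N}a_{\s}\sum_\alpha\psi^{\s}_\alpha(\boldsymbol S)\Delta_{\boldsymbol S}^2\psi^{\s}_\alpha(\boldsymbol S)^*$, the tail of the purity series for $\boldsymbol S$; purity means $Q_N\to0$ strongly. The idea is to compare, for $h\in\mathcal N$, the tail of the purity series for $B_{\boldsymbol T}$ with $\langle Q_N h,h\rangle$, using $\Delta_{B_{\boldsymbol T}}^2 = P_{\mathcal N}\Delta^2_{\boldsymbol S}|_{\mathcal N}$ together with the same telescoping identity (Lemma \ref{cha:existence fo b_s}) as in Proposition \ref{lem:cha:2}. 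That identity expresses the partial sums as $I$ minus a tail of the form $\sum_{|\underline p|>N}(\cdots)\psi^{\underline p}(B)\psi^{\underline p}(B)^*$ with nonnegative coefficients, because $b_{\s}\ge0$ when $K$ is CNP. If this nonnegativity is unavailable in general, I would fall back on showing the tails are bounded by those of $\boldsymbol S$ restricted to $\mathcal N$, which tend to $0$. With (1)--(3) verified, Proposition \ref{prop 4.2} gives factorability, and the first step then yields the characteristic function.
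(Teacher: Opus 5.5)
Your reduction matches the paper's. With $X=I-V_{\boldsymbol T}V_{\boldsymbol T}^*=P_{\mathcal N}$ and $\boldsymbol S=\boldsymbol M\otimes I$, condition (1) of Proposition \ref{prop 4.2} is automatic, and condition (2) says exactly that $B_{\boldsymbol T}$ is a $\frac1K$-contraction. You are also right that condition (3) says exactly that $B_{\boldsymbol T}$ is pure.

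The gap is in how you verify (3). Your telescoping argument needs $b_{\s}\ge 0$, i.e.\ that $K$ is CNP, but the theorem assumes only admissibility. Without a sign on the $b_{\s}$, three things go wrong:
\begin{itemize}
\item the tail coefficients $\sum_{|\s|\le N}a_{\s}b_{\tilde{\s}}c^{\p}_{\s,\tilde{\s}}$ can be negative;
\item the rearrangement that produces the tail is no longer justified;
\item the termwise bound $\|\psi^{\p}_\beta(B_{\boldsymbol T})^*h\|\le\|\psi^{\p}_\beta(\boldsymbol S)^*h\|$ gives no control over a signed sum.
\end{itemize}
So your fallback does not work. As written, you have proved the converse only for CNP kernels. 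That is all Theorem \ref{Existence of cha fun} uses, but it is less than the statement claims.

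The identity $\Delta^2_{B_{\boldsymbol T}}=P_{\mathcal N}\Delta^2_{\boldsymbol S}|_{\mathcal N}$ that you invoke is false. Take $\boldsymbol T=0$. Then $\Delta^2_{\boldsymbol S}=E_0\otimes I$, which vanishes on $\mathcal N=(H_K^c)^\perp\otimes\mathcal H$. But $\Delta^2_{B_{\boldsymbol T}}$ is the identity on $\mathcal P_{\epsilon_1}\otimes\mathcal H$. In the CNP case you do not need this identity: apply the telescoping identity to $B_{\boldsymbol T}$ itself and compare termwise with $\boldsymbol S$.

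The missing idea, which the paper uses, is to test on kernel functions, since they are joint eigenvectors of $\boldsymbol S^*$:
\[
\psi^{\s}_\alpha(\boldsymbol S)^*(K_{\boldsymbol w}\otimes x)=\overline{\psi^{\s}_\alpha(\boldsymbol w)}\,K_{\boldsymbol w}\otimes x .
\]
For the partial sums $S_N$ of the series in (3), this gives
\[
\langle S_N(K_{\boldsymbol w_i}\otimes x_i),K_{\boldsymbol w_j}\otimes x_j\rangle=\frac{\sum_{|\s|\le N}a_{\s}K_{\s}(\boldsymbol w_j,\boldsymbol w_i)}{K(\boldsymbol w_j,\boldsymbol w_i)}\,\langle X(K_{\boldsymbol w_i}\otimes x_i),K_{\boldsymbol w_j}\otimes x_j\rangle .
\]
The scalar factor tends to $1$, so $\langle S_Nf,f\rangle\to\langle Xf,f\rangle$ on the dense span of the $K_{\boldsymbol w}\otimes x$.

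Every summand of $S_N$ is positive, by (2) and $a_{\s}>0$, so $S_N$ is increasing. Hence $\langle S_Nf,f\rangle\le\langle Xf,f\rangle$ on that dense set, and so $S_N\le X$. An increasing sequence dominated by $X$ that converges weakly to $X$ on a dense set converges strongly to $X$. No sign condition on $b_{\s}$ enters.

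Be aware that the paper asserts $S_N\le X$ directly from its displayed computation. That computation's last line wrongly pulls an $(i,j)$-dependent scalar out of the double sum; monotonicity is what actually justifies the inequality.

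The rest of your proposal is fine. This includes your argument that a factorization of $P_{\mathcal N}$ comes from a multiplier, which the paper leaves implicit.
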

\begin{proof}
  Let $\boldsymbol T$ admit a characteristic function. Then there exists a $B(\mathcal L, \overline{\text{Ran}}\,\,\Delta_{\boldsymbol T})-$ valued bounded analytic function $\theta_{\boldsymbol T}$ on $\Omega$ such that
  $$I-V_{\boldsymbol T}V_{\boldsymbol T}^*=M_{\theta_{\boldsymbol T}}M_{\theta_{\boldsymbol T}}^*.$$ Thus for any $N\geq 1,$
  \begin{align}\label{Main2}
    \nonumber& (I-V_{\boldsymbol T}V_{\boldsymbol T}^*)-\Big(\displaystyle \sum_{|\s| = 1}^{N} b_{\s} \sum_{\alpha=1}^{d_{\s}} (\psi_\alpha^{\s}(\boldsymbol{M})\otimes I)(I-V_{\boldsymbol T}V_{\boldsymbol T}^*) (\psi_\alpha^{\s}(\boldsymbol{M})^*\otimes I) \Big)\\
    &\phantom{xxxx}= M_{\theta_{\boldsymbol T}}\Big(I-\displaystyle \sum_{|\s| = 1}^{N} b_{\s} \sum_{\alpha=1}^{d_{\s}} (\psi_\alpha^{\s}(\boldsymbol{M})\otimes I) (\psi_\alpha^{\s}(\boldsymbol{M})^*\otimes I) \Big)M_{\theta_{\boldsymbol T}}^*.
  \end{align}
  Since $K$ is an admissible kernel, it follows that the tuple of multiplication operators $\boldsymbol M$ is $\frac{1}{K}$-contraction and therefore, the series $\displaystyle \sum_{\s > \underline 0} b_{\s} \sum_{\alpha=1}^{d_{\s}} (\psi_\alpha^{\s}(\boldsymbol{M})\otimes I) (\psi_\alpha^{\s}(\boldsymbol{M})^*\otimes I) \Big)$ converges strongly and $\Big(I-\displaystyle \sum_{\s > \underline 0} b_{\s} \sum_{\alpha=1}^{d_{\s}} (\psi_\alpha^{\s}(\boldsymbol{M})\otimes I) (\psi_\alpha^{\s}(\boldsymbol{M})^*\otimes I) \Big)\geq 0.$ This with \eqref{Main2} implies that $B_{\boldsymbol{T}}$ is a $\frac{1}{K}$-contraction.

Conversely, assume that $B_{\boldsymbol{T}}$ is a $\frac{1}{K}$-contraction. Let $c_i=\|M_{z_i}\|^2.$ Suppose $P_{\ker V_{\boldsymbol{T}}^*} := I - V_{\boldsymbol{T}}V_{\boldsymbol{T}}^*$ is the projection of $H_K\otimes \overline{\text{Ran}}\,\,\Delta_{\boldsymbol{T}}$ onto $\ker V_{\boldsymbol{T}}^*.$ For each $i, $ define a linear operator $R_i:=M_{z_i}\otimes I|_{\ker V_{\boldsymbol{T}}^*}.$ Then $\|R_i\|^2\leq c_i.$ In other words, $c_iI\geq R_i R_i^*,$ i.e.,
$$c_i(I-V_{\boldsymbol T}V_{\boldsymbol T}^*)-(M_{z_i}\otimes I)(I-V_{\boldsymbol T}V_{\boldsymbol T}^*)(M_{z_i}\otimes I)^*\geq 0.$$
This proves condition $(1)$ of Proposition \ref{prop 4.2}.
Since $\boldsymbol{M}$ is a $\frac{1}{K}$-contraction, it follows that the series 
\begin{align*}
  &\displaystyle \sum_{\s > \underline 0} b_{\s} \sum_{\alpha=1}^{d_{\s}} \psi_\alpha^{\s}(\boldsymbol{M}\otimes I|_{\ker V_T^*}) \psi_\alpha^{\s}(\boldsymbol{M}\otimes I|_{\ker V_T^*})^*\\&
 \phantom{xxxxxxx} =\displaystyle \sum_{\s > \underline 0} b_{\s} \sum_{\alpha=1}^{d_{\s}} \psi_\alpha^{\s}(\boldsymbol{M}\otimes I)(I-V_{\boldsymbol T}V_{\boldsymbol T}^*) \psi_\alpha^{\s}(\boldsymbol{M}\otimes I)^*  
\end{align*}
converges strongly and
\begin{align*}
    &(I-V_{\boldsymbol T}V_{\boldsymbol T}^*)-\displaystyle \sum_{\s > \underline 0} b_{\s} \sum_{\alpha=1}^{d_{\s}} \psi_\alpha^{\s}(\boldsymbol{M}\otimes I)(I-V_{\boldsymbol T}V_{\boldsymbol T}^*) \psi_\alpha^{\s}(\boldsymbol{M}\otimes I)^*\\
    &\phantom{xxxxxxxxxxxxxxxxxxxx}=(I-V_{\boldsymbol T}V_{\boldsymbol T}^*)-P_{\boldsymbol{M} \otimes I}(I-V_{\boldsymbol T}V_{\boldsymbol T}^*)\geq 0,
\end{align*}
where the map $P_{\boldsymbol{M} \otimes I}$ is defined in Proposition \ref{prop 4.2} $(2)$. This proves condition $(2)$ of Proposition \ref{prop 4.2}.

Let $N\geq 1$ be a natural number. Suppose
$$S_N=\displaystyle \sum_{|\s| = 0}^{N} a_{\s} \sum_{\alpha=1}^{d_{\s}} \psi_\alpha^{\s}(\boldsymbol{M}\otimes I)\Big((I-V_{\boldsymbol T}V_{\boldsymbol T}^*)-P_{\boldsymbol{M} \otimes I}(I-V_{\boldsymbol T}V_{\boldsymbol T}^*)\Big) \psi_\alpha^{\s}(\boldsymbol{M}\otimes I)^*.$$
We prove that $S_N$ converges strongly to $(I-V_{\boldsymbol T}V_{\boldsymbol T}^*).$
Consider an element $\sum_{i=1}^n( K_{w_i}\otimes x_i)\in H_K\otimes \overline{\text{Ran}}\,\,\Delta_{\boldsymbol{T}}.$ Then

\begin{align*}
   &\Big\langle S_N\sum_{i=1}^n( K_{\boldsymbol w_i}\otimes x_i), \sum_{i=1}^n( K_{\boldsymbol w_i}\otimes x_i)\Big\rangle \\ 
    &=\sum_{i,j=1}^n\Big[\Big\langle \displaystyle \sum_{|\s| \leq N} a_{\s} \sum_{\alpha=1}^{d_{\s}} \psi_\alpha^{\s}(\boldsymbol{M}\otimes I)\Big((I-V_{\boldsymbol T}V_{\boldsymbol T}^*)\Big) \psi_\alpha^{\s}(\boldsymbol{M}\otimes I)^*K_{\boldsymbol w_i}\otimes x_i, K_{\boldsymbol w_j}\otimes x_j\Big\rangle\\
    &-\Big\langle \displaystyle \sum_{|\s| \leq N} a_{\s} \sum_{\alpha=1}^{d_{\s}} \psi_\alpha^{\s}(\boldsymbol{M}\otimes I)\Big(P_{\boldsymbol{M} \otimes I}(I-V_{\boldsymbol T}V_{\boldsymbol T}^*)\Big) \psi_\alpha^{\s}(\boldsymbol{M}\otimes I)^*K_{\boldsymbol w_i}\otimes x_i, K_{\boldsymbol w_j}\otimes x_j\Big\rangle\Big]\\    
    &=\sum_{i,j=1}^n\Big[\Big\langle \displaystyle \sum_{\s \leq N} a_{\s} \sum_{\alpha=1}^{d_{\s}}\overline{\psi_\alpha^{\s}(\boldsymbol w_i)} \psi_\alpha^{\s}(\boldsymbol{M}\otimes I)\Big((I-V_{\boldsymbol T}V_{\boldsymbol T}^*)\Big) K_{\boldsymbol w_i}\otimes x_i, K_{\boldsymbol w_j}\otimes x_j\Big\rangle\\
    &-\Big\langle \displaystyle \sum_{|\s| \leq N} a_{\s} \sum_{\alpha=1}^{d_{\s}}\overline{\psi_\alpha^{\s}(\boldsymbol w_i)}  \psi_\alpha^{\s}(\boldsymbol{M}\otimes I)\sum_{\tilde{\s}\geq \underline 0} b_{\tilde{\s}} \sum_{\tilde{\alpha}=1}^{d_{\tilde{\s}}}\overline{\psi_{\tilde{\alpha}}^{\tilde{\s}}(\boldsymbol w_i)}\psi_{\tilde{\alpha}}^{\tilde{\s}}(\boldsymbol{M}\otimes I)(I-V_{\boldsymbol T}V_{\boldsymbol T}^*)\Big)K_{\boldsymbol w_j}\otimes x_j,\\
&\phantom{xxxxxxxxxxxxxxxxxxxxxxxxxxxxxxxxxxxxxxxxxxxxxxxxxxxxxxxxx}K_{\boldsymbol w_j}\otimes x_j \Big\rangle\Big]\\
    &=\sum_{i,j=1}^n\Big[\displaystyle \sum_{|\s| = 0}^{ N} a_{\s} \sum_{\alpha=1}^{d_{\s}}\overline{\psi_\alpha^{\s}(\boldsymbol w_i)} \psi_\alpha^{\s}(\boldsymbol w_j)\Big\langle \Big((I-V_{\boldsymbol T}V_{\boldsymbol T}^*)\Big) K_{\boldsymbol w_i}\otimes x_i, K_{\boldsymbol w_j}\otimes x_j\Big\rangle\\
    &-\displaystyle \sum_{|\s| = 0}^{ N} a_{\s} \sum_{\alpha=1}^{d_{\s}}\overline{\psi_\alpha^{\s}(\boldsymbol w_i)}  \psi_\alpha^{\s}(\boldsymbol w_j)\sum_{\tilde{\s}\geq \underline 0} b_{\tilde{\s}} \sum_{\tilde{\alpha}=1}^{d_{\tilde{\s}}}\overline{\psi_{\tilde{\alpha}}^{\tilde{\s}}(\boldsymbol w_i)}\psi_{\tilde{\alpha}}^{\tilde{\s}}(\boldsymbol w_j)\Big\langle (I-V_{\boldsymbol T}V_{\boldsymbol T}^*)\Big)K_{\boldsymbol w_i}\otimes x_i, K_{\boldsymbol w_j}\otimes x_j \Big\rangle\Big]
    \\
    &=\displaystyle \sum_{|\s| = 0}^{N} a_{\s} \sum_{\alpha=1}^{d_{\s}}\overline{\psi_\alpha^{\s}(\boldsymbol w_i)}  \psi_\alpha^{\s}(\boldsymbol w_j)\Big[1-\sum_{\tilde{\s}\geq 0} b_{\tilde{\s}} \sum_{\tilde{\alpha}=1}^{d_{\tilde{\s}}}\overline{\psi_{\tilde{\alpha}}^{\tilde{\s}}(\boldsymbol w_i)}\psi_{\tilde{\alpha}}^{\tilde{\s}}(\boldsymbol w_j)\Big]\Big\langle (I-V_{\boldsymbol T}V_{\boldsymbol T}^*)\Big)\sum_{i=1}^nK_{\boldsymbol w_i}\otimes x_i,\\
    &\phantom{xxxxxxxxxxxxxxxxxxxxxxxxxxxxxxxxxxxxxxxxxxxxxxxxxxxxxxx}\sum_{i=1}^n K_{\boldsymbol w_i}\otimes x_i \Big\rangle.
\end{align*}
The equation above implies that $S_{N} \leq (I-V_{\boldsymbol T}V_{\boldsymbol T}^*)$.
Also, note that the series 
$$\displaystyle \sum_{|\s| = 0}^{N} a_{\s} \sum_{\alpha=1}^{d_{\s}}\overline{\psi_\alpha^{\s}(\boldsymbol w_i)}  \psi_\alpha^{\s}(\boldsymbol w_j)\Big[1-\sum_{\tilde{\s}\geq \underline 0} b_{\tilde{\s}} \sum_{\tilde{\alpha}=1}^{d_{\tilde{\s}}}\overline{\psi_{\tilde{\alpha}}^{\tilde{\s}}(\boldsymbol w_i)}\psi_{\tilde{\alpha}}^{\tilde{\s}}(\boldsymbol w_j)\Big]$$
converges to 1 as $N\to \infty,$ which implies that $\{S_N\}$ converges to $(I-V_{\boldsymbol T}V_{\boldsymbol T}^*)$ for every element of the set $(H_K\otimes \overline{\text{Ran}}\,\,\Delta_{\boldsymbol{T}})_0 := \{\sum_{i=1}^n K_{\boldsymbol w_i}\otimes x_i: n\geq 1, \boldsymbol w_1, \ldots, \boldsymbol w_n\in \Omega, x_1,\ldots, x_n\in \overline{\text{Ran}}\,\,\Delta_{\boldsymbol{T}}\}$. Since  $(H_K\otimes \overline{\text{Ran}}\,\,\Delta_{\boldsymbol{T}})_0$ is a dense subset of $H_K\otimes \overline{\text{Ran}}\,\,\Delta_{\boldsymbol{T}}$ and $\{S_N\}$ is bounded above by $(I-V_{\boldsymbol T}V_{\boldsymbol T}^*)$, it follows that $S_N$ converges to $(I-V_{\boldsymbol T}V_{\boldsymbol T}^*)$ in strong operator topology. This proves condition $(3)$ of the Proposition \ref{prop 4.2}. Therefore, $(I-V_{\boldsymbol T}V_{\boldsymbol T}^*)$ is $(K, \boldsymbol{M}\otimes I)$-factorable. In other words, $\boldsymbol T$ admits a characteristic function.
\end{proof}

Recall from \cite{Tiret} that an admissible kernel $K$ is said to admit a characteristic function if every pure $\frac{1}{K}$-contraction admits a characteristic function. A sufficient condition for a $\mathbb K$-invariant kernel to have CNP property is obtained in the theorem below.
\begin{theorem}\label{Existence of cha fun}
   If $K$ is a $\mathbb K$-invariant CNP kernel, then $K$ admits a characteristic function.
\end{theorem}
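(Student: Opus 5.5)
By Lemma \ref{bounded}, a $\mathbb K$-invariant CNP kernel $K$ is admissible, so Theorem \ref{B_T} applies. It therefore suffices to show that for every pure $\frac{1}{K}$-contraction $\boldsymbol T$, the tuple $B_{\boldsymbol T}$ (Definition \ref{Def-BT}) is a $\frac{1}{K}$-contraction. The plan is to exploit the fact that the coefficients $b_{\s}$ in $1-\frac1K=\sum_{\s>\underline 0} b_{\s}K_{\s}$ are all non-negative (Corollary \ref{k-inv-cnp}). Then $\frac1K$-contractivity is a statement about a positive combination of ``row-type'' sums. Such statements pass to restrictions of $\boldsymbol M\otimes I$ to co-invariant or invariant subspaces in a controlled way.

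\textbf{Step 1: reduce to a compression inequality.} Set $\mathcal N:=\ker V_{\boldsymbol T}^*$ and $P:=P_{\mathcal N}=I-V_{\boldsymbol T}V_{\boldsymbol T}^*$. By Theorem \ref{isometry}, $\mathcal N$ is invariant under each $M_{z_i}\otimes I$. Hence $\psi^{\s}_\alpha(B_{\boldsymbol T})=\psi^{\s}_\alpha(\boldsymbol M\otimes I)|_{\mathcal N}$, and therefore
$$\psi^{\s}_\alpha(B_{\boldsymbol T})\psi^{\s}_\alpha(B_{\boldsymbol T})^*=P\,\psi^{\s}_\alpha(\boldsymbol M\otimes I)\,P\,\psi^{\s}_\alpha(\boldsymbol M\otimes I)^*|_{\mathcal N}.$$
Since each $b_{\s}\ge 0$, every partial sum satisfies
$$\sum_{|\s|\le N} b_{\s}\sum_\alpha \psi^{\s}_\alpha(B_{\boldsymbol T})\psi^{\s}_\alpha(B_{\boldsymbol T})^*\;\le\; P\Big(\sum_{|\s|\le N} b_{\s}\sum_\alpha \psi^{\s}_\alpha(\boldsymbol M\otimes I)\psi^{\s}_\alpha(\boldsymbol M\otimes I)^*\Big)P\Big|_{\mathcal N},$$
because $P\le I$ and the map $X\mapsto \psi X\psi^*$ is positive. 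Admissibility says that the series on the right converges strongly and is bounded by $I$. Hence the partial sums on the left form an increasing sequence of positive operators bounded by $I_{\mathcal N}$. They therefore converge strongly, and $I-\sum_{\s>\underline 0} b_{\s}\sum_\alpha\psi^{\s}_\alpha(B_{\boldsymbol T})\psi^{\s}_\alpha(B_{\boldsymbol T})^*\ge 0$. This is exactly the definition of a $\frac1K$-contraction. By Theorem \ref{B_T}, $\boldsymbol T$ then admits a characteristic function.

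\textbf{Main point to check.} The crucial step is the monotone convergence in the displayed inequality. It rests on two facts: each $b_{\s}\ge 0$, which holds precisely because $K$ is CNP and is the only place that hypothesis enters; and the invariance of $\ker V_{\boldsymbol T}^*$. I would carefully verify that $\boldsymbol M\otimes I$ is a $\frac1K$-contraction on $H_K\otimes\overline{\mbox{Ran}}\,\Delta_{\boldsymbol T}$, which follows from Lemma \ref{bounded} by tensoring with the identity. Everything else is bookkeeping.
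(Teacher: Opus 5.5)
Your proposal is correct and takes essentially the same route as the paper. You invoke admissibility via Lemma \ref{bounded} and reduce to Theorem \ref{B_T}. You then use $b_{\s}\ge 0$ and the invariance of $\ker V_{\boldsymbol T}^*$ under $\boldsymbol M\otimes I$ to bound each partial sum for $B_{\boldsymbol T}$ by the compression of the corresponding sum for $\boldsymbol M\otimes I$, which is at most $I$. Your explicit monotone-convergence remark fills in a step the paper leaves implicit: the increasing, bounded partial sums converge strongly, as the definition of a $\frac{1}{K}$-contraction requires.
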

\begin{proof}
  Let $\boldsymbol T$ be a pure $\frac{1}{K}$-contraction. To show that $\boldsymbol T$ admits a characteristic function, it is enough to show $B_{\boldsymbol T}$ is a $\frac{1}{K}$-contraction (see Theorem \ref{B_T}). Since $K$ is an admissible kernel,  $\boldsymbol{M}$ on $H_K$ is a $\frac{1}{K}$-contraction. Thus $\boldsymbol{M}\otimes I$ on $H_K \otimes \overline{\mbox{Ran}}\,\, \Delta_{\boldsymbol{T}}$ is also $\frac{1}{K}$-contraction.
  For any $N\geq 1,$
  \begin{align*}
      \displaystyle \sum_{|\s| = 1}^{N} b_{\s} \sum_{\alpha=1}^{d_{\s}} \psi_\alpha^{\s}(B_{\boldsymbol T}) \psi_\alpha^{\s}(B_{\boldsymbol T})^* &=\displaystyle \sum_{|\s| = 1}^{N} b_{\s} \sum_{\alpha=1}^{d_{\s}} P_{\ker V_{\boldsymbol T}^*}(\psi_\alpha^{\s}(\boldsymbol{M})\otimes I) P_{\ker V_{\boldsymbol T}^*}(\psi_\alpha^{\s}(\boldsymbol{M})^*\otimes I )|_{{\ker V_{\boldsymbol T}^*}}\\
      &\leq \displaystyle \sum_{\s > \underline 0} b_{\s} \sum_{\alpha=1}^{d_{\s}} P_{\ker V_{\boldsymbol T}^*}(\psi_\alpha^{\s}(\boldsymbol{M})\otimes I) (\psi_\alpha^{\s}(\boldsymbol{M})^*\otimes I )|_{{\ker V_{\boldsymbol T}^*}}\\
      &\leq I_{\ker V_{\boldsymbol T}^*}.
  \end{align*}
  Thus $B_{\boldsymbol T}$ is a $\frac{1}{K}$-contraction.
\end{proof}

Now, we have all the necessary ingredients to prove the main theorem of this section. It professes that the CNP property for an admissible kernel on a Cartan domain is characterized by the requirement of the existence of the characteristic function of every pure $\frac{1}{K}$-contraction. A similar characterization of a unitary invariant kernel on a unit ball is proved in \cite[Theorem 3.4]{Tiret}.  

\begin{theorem}\label{cnp main}
Suppose $K$ is an admissible kernel on a Cartan domain $\Omega$. Then, the kernel $K$ is CNP if and only if any pure $\frac{1}{K}$-contraction admits a characteristic function.
\end{theorem}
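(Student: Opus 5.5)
The forward implication is exactly Theorem \ref{Existence of cha fun}: if $K$ is CNP, then every pure $\frac{1}{K}$-contraction admits a characteristic function. So the work is in the converse. Assume every pure $\frac{1}{K}$-contraction admits a characteristic function; I want to show that $b_{\s}\ge 0$ for every $\s>\underline 0$. By Corollary \ref{k-inv-cnp}, this is equivalent to $K$ being CNP. Following \cite[Theorem 3.4]{Tiret}, I will apply the hypothesis to a well-chosen pure $\frac{1}{K}$-contraction, namely a compression of $\boldsymbol M$ to a co-invariant subspace of $H_K$.

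\textbf{Choice of test contraction.} Fix a signature $\underline q>\underline 0$. Let $\mathcal N\subset H_K$ be the closed span of the polynomials of degree $\le |\underline q|$; this is co-invariant for $\boldsymbol M$. Set $\boldsymbol T=P_{\mathcal N}\boldsymbol M|_{\mathcal N}$. Because $\mathcal N^\perp$ is $\boldsymbol M$-invariant, we have $\psi(\boldsymbol T)^*=\psi(\boldsymbol M)^*|_{\mathcal N}$. It follows that $\boldsymbol T$ is a $\frac{1}{K}$-contraction with $\Delta_{\boldsymbol T}^2=P_{\mathcal N}E_0|_{\mathcal N}$, using Lemma \ref{lem2.1}. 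Purity of $\boldsymbol T$ is inherited from the purity of $\boldsymbol M$ by compressing the strongly convergent series. Since $\overline{\mbox{Ran}}\,\Delta_{\boldsymbol T}$ is one-dimensional, $V_{\boldsymbol T}$ maps $\mathcal N$ into $H_K$ and can be identified with the inclusion $\mathcal N\hookrightarrow H_K$. Hence $\ker V_{\boldsymbol T}^*=\mathcal N^\perp$, and $B_{\boldsymbol T}=\boldsymbol M|_{\mathcal N^\perp}$.

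\textbf{Extracting the sign condition.} By hypothesis together with Theorem \ref{B_T}, $B_{\boldsymbol T}$ is a $\frac{1}{K}$-contraction. Concretely,
$$P_{\mathcal N^\perp}-\sum_{\s>\underline 0}b_{\s}\sum_{\alpha}\psi^{\s}_\alpha(\boldsymbol M)P_{\mathcal N^\perp}\psi^{\s}_\alpha(\boldsymbol M)^*\Big|_{\mathcal N^\perp}\ \ge 0 .$$
I will test this inequality against a vector $f\in\mathcal P_{\underline q'}\subset\mathcal N^\perp$, where $|\underline q'|=|\underline q|+1$, or more economically against a suitable homogeneous $f$ of degree $m=|\underline q|+1$. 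For $|\s|\ge 1$, the function $\psi^{\s}_\alpha(\boldsymbol M)^*f$ has degree $m-|\s|\le|\underline q|$, so it lies in $\mathcal N$ and $P_{\mathcal N^\perp}$ annihilates it. Thus every term vanishes, which shows that this choice is too naive. I therefore need a different $\mathcal N$: instead, take $\mathcal N=\mathcal N_{\underline q}=\bigoplus_{\underline p\not\ge \underline q}\mathcal P_{\underline p}$. This is co-invariant, because $\underline p\ge\underline q$ is preserved under multiplication by polynomials (by \eqref{exist:eqn1 b_s}). Then $\mathcal N^\perp=\overline{\bigoplus_{\underline p\ge\underline q}\mathcal P_{\underline p}}$. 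Testing against $f\in \mathcal P_{\underline q}$, only $\psi^{\s}_\alpha(\boldsymbol M)^*f$ with components in degrees $\ge$ the grading of $\underline q$ survive. Since $\deg\psi^{\s}_\alpha(\boldsymbol M)^*f<|\underline q|$ for $|\s|\ge1$, all of these vanish again. The correct test vector is instead a combination that mixes $K_{\boldsymbol w}$ components, as in \cite{Tiret}: there one uses the kernel-function identity $\langle (I-P(\cdot))P_{\mathcal N^\perp}K_{\boldsymbol w},K_{\boldsymbol z}\rangle$. I would evaluate the operator inequality on $P_{\mathcal N^\perp}K_{\boldsymbol w}$ and use Corollary \ref{pp3}-type homogeneity to read off the lowest-order term in $\boldsymbol z,\bar{\boldsymbol w}$. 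That term should be a positive multiple of $b_{\underline q}K_{\underline q}(\boldsymbol z,\boldsymbol w)$.

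\textbf{Main obstacle.} The main difficulty is exactly this extraction step: choosing $\mathcal N$ and the test vectors so that the positivity of the $\frac1K$-defect of $B_{\boldsymbol T}$ isolates the sign of a single $b_{\underline q}$. I would proceed by induction on $|\underline q|$, assuming $b_{\s}\ge0$ for $|\s|<|\underline q|$. I would take $\mathcal N=\mathcal N_{\underline q}$ as above and compute the defect sesqui-analytically as
$$P_{\mathcal N^\perp}K(\boldsymbol z,\boldsymbol w)\Big(1-\sum b_{\s}K_{\s}\Big)\text{-type expressions},$$
then compare homogeneous components of degree $|\underline q|$ in the $\mathcal P_{\underline q}$-isotypic part. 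In that component, all contributions except the one proportional to $b_{\underline q}$ either vanish, because they come from $\mathcal P_{\underline p}$ with $\underline p\not\ge\underline q$, or are controlled by the inductive hypothesis. Nonnegativity of that component then forces $b_{\underline q}\ge0$.
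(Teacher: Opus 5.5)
Your forward direction and your set-up are fine: you compress $\boldsymbol M$ to a co-invariant $\mathcal N$, so that $\ker V_{\boldsymbol T}^*=\mathcal N^\perp$ and $B_{\boldsymbol T}=\boldsymbol M|_{\mathcal N^\perp}$. The converse, however, has a genuine gap. The extraction step is never carried out, and the route you finally commit to cannot work. Whenever $\mathcal N^\perp$ is a sum of spaces $\mathcal P_{\underline p}$, the defect $D=P_{\mathcal N^\perp}-\sum_{\s>\underline 0}b_{\s}\sum_\alpha\psi^{\s}_\alpha(\boldsymbol M)P_{\mathcal N^\perp}\psi^{\s}_\alpha(\boldsymbol M)^*$ is degree-preserving and $\mathbb K$-equivariant. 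Hence it acts as a scalar $\lambda(\s')$ on each $\mathcal P_{\s'}\subset\mathcal N^\perp$. Testing on $P_{\mathcal N^\perp}K_{\boldsymbol w}$ just produces $\sum_{\s'}\lambda(\s')a_{\s'}K_{\s'}(\boldsymbol z,\boldsymbol w)$, so it gives nothing beyond the individual conditions $\lambda(\s')\ge 0$. For $\mathcal N=\mathcal N_{\underline q}$, your own computation shows $\lambda(\underline q)=1$. So the lowest-order term is $a_{\underline q}K_{\underline q}(\boldsymbol z,\boldsymbol w)$, not a multiple of $b_{\underline q}$. In general one finds $a_{\s'}\lambda(\s')=\sum_{\tilde{\s}\not\geq\underline q}a_{\tilde{\s}}b_{\s}c^{\s'}_{\s,\tilde{\s}}$, which never isolates a single coefficient. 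The induction does not rescue this: an inequality of the form $b_{\underline q}+(\text{terms that are }\ge 0\text{ by induction})\ge 0$ cannot force $b_{\underline q}\ge 0$.

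The missing idea is to take $\mathcal N$ as small as possible, namely the constants, i.e.\ $\boldsymbol T=0$. This is what the paper does, with $\boldsymbol T=0$ on an arbitrary $\mathcal H$, so that $\Delta_{\boldsymbol T}=I$, $V_{\boldsymbol T}h=1\otimes h$ and $\ker V_{\boldsymbol T}^*=E_0^\perp H_K\otimes\mathcal H$. Take $f=K_{\s'}(\cdot,\boldsymbol w)$. Then $\psi^{\s}_\alpha(\boldsymbol M)^*f$ is nonconstant for $|\s|<|\s'|$, constant for $|\s|=|\s'|$, and zero for $|\s|>|\s'|$. So $E_0^\perp$ deletes exactly the top-degree terms. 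What remains acts on $K_{\s'}(\cdot,\boldsymbol w)$ as the scalar $\gamma(\s')=a_{\s'}^{-1}\sum_{1\le|\s|<|\s'|}a_{\tilde{\s}}b_{\s}c^{\s'}_{\s,\tilde{\s}}$. The identity \eqref{cha:eqn:1}, together with $c^{\s'}_{\s',\underline 0}=1$, gives $1-\gamma(\s')=b_{\s'}/a_{\s'}$. Positivity then yields $b_{\s'}\ge 0$ for every $\s'$ simultaneously, with no induction, and Corollary \ref{k-inv-cnp} finishes. Your first attempt was in fact the right family with the wrong parameter. For $\mathcal N$ the polynomials of degree $\le n$ and $f\in\mathcal P_{\s'}$, one gets $a_{\s'}\lambda(\s')=\sum_{|\tilde{\s}|\le n}a_{\tilde{\s}}b_{\s}c^{\s'}_{\s,\tilde{\s}}$. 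This equals $a_{\s'}$ when $|\s'|=n+1$, which is the triviality you observed, and it reduces to the single term $b_{\s'}$ exactly when $n=0$.
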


\begin{proof}
The proof of the forward direction follows from Theorem \ref{Existence of cha fun}. Therefore, we prove the converse direction. Assume that any pure $\frac{1}{K}$-contraction admits a characteristic function. Let $\mathcal{H}$ be Hilbert space. It is trivial to verify that the $d$-tuple $\boldsymbol{T} = (0, \ldots, 0)$ of zero operators on $\mathcal{H}$ is a pure $\frac{1}{K}$-contraction and $\Delta_{\boldsymbol{T}} = I_{\mathcal{H}}$. Consequently, we have $\mbox{Ran}\,\Delta_{\boldsymbol{T}} = \mathcal{H}$. From Theorem \ref{isometry}, it follows that the operator $V_{\boldsymbol{T}} : \mathcal{H} \to H_{K} \otimes \overline{\mbox{Ran}}\,\Delta_{\boldsymbol{T}}$ is given by 
$$V_{\boldsymbol{T}} (h) = 1 \otimes h,\,\,h \in \mathcal{H},$$
where $1$ denotes the constant function in $H_K$ which maps every element of $\Omega$ to $1$. 

Let $H_K^c$ denote the set of all constant functions in $H_K$ and $E_0^\perp$ denote the orthogonal projection of $H_K$ onto $(H_K^c)^\perp$. A direct computation verifies that $\ker (I - V_{\boldsymbol{T}}) = (H_K^c)^\perp \otimes \mathcal{H}$. Due to Theorem \ref{Existence of cha fun}, it follows that $\boldsymbol{B}_{\boldsymbol{T}}$, defined in Equation \eqref{Eqn:Def-BT}, is a $\frac{1}{K}$-contraction. A straightforward computation implies that $\boldsymbol{B}_{\boldsymbol{T}}$ is a $\frac{1}{K}$-contraction if and only if
\begin{equation}\label{main thm:eqn0}
E_0^\perp - \displaystyle \sum_{\s > \underline 0} b_{\s} \sum_{\alpha = 1}^{d_{\s}} \psi_{\alpha}^{\s} (\boldsymbol{M}) E_0^\perp \psi_{\alpha}^{\s} (\boldsymbol{M})^* \geq 0,    
\end{equation}
where the sequence $\{b_{\s}\}_{\s > \underline 0}$ of real numbers is given by Corollary \ref{k-inv-cnp}. To prove that $K$ is a CNP kernel, it suffices to show $b_{\s} \geq 0$ for each signature $\s > \underline 0$, thanks to Corollary \ref{k-inv-cnp}.

Let $N \geq 1$ be an arbitrary natural number, $\s'$ be a signature of length $N+1$ and $\psi_{\alpha'}^{\s'}$ be an arbitrary element of the orthonormal basis of $\mathcal{P}_{\s'}$. From \cite[Proposition 4.11.36]{upmeier}, it follows that 
\begin{equation}\label{main thm:eqn1}
\psi_{\alpha}^{\s} (\boldsymbol{M})^* \psi_{\alpha'}^{\s'} =
\begin{cases}
0, & \text{if $|\s'| < |\s|$}\\
\displaystyle \sum_{|\tilde{\s}| = |\s'| - |\s|} \frac{a_{\tilde{\s}'}}{a_{\s'}} \left(\psi_{\alpha}^{\s} (\partial) \psi_{\alpha'}^{\s'} \right)_{\tilde{\s}} , & \text{if $|\s'| \geq |\s|$}
\end{cases}
\end{equation}
where the summation is taken over all signatures $\tilde{\s}$ such that $|\tilde{\s}| = |\s'| - |\s|$, $\partial$ denotes the tuple of partial differentials $(\partial_{z_1}, \ldots, \partial_{z_d})$ and $\left(\psi_{\alpha}^{\s} (\partial) \psi_{\alpha'}^{\s'} \right)_{\tilde{\s}}$ denotes the component of $\psi_{\alpha}^{\s} (\partial) \psi_{\alpha'}^{\s'}$ in $\mathcal{P}_{\tilde{\s}}$ (see also \cite[Lemma 15]{Up}). 
Let $\boldsymbol{w}$ be an arbitrary element of $\Omega$. Therefore, from Equation \eqref{main thm:eqn1}, we have 
$$\displaystyle \sum_{\s > \underline 0} b_{\s} \sum_{\alpha = 1}^{d_{\s}} \psi_{\alpha}^{\s} (\boldsymbol{M}) E_0^\perp \psi_{\alpha}^{\s} (\boldsymbol{M})^* K_{\s'}(\cdot, \boldsymbol{w}) = \displaystyle \sum_{|\s| \leq N} b_{\s} \sum_{\alpha = 1}^{d_{\s}} \psi_{\alpha}^{\s} (\boldsymbol{M})  \psi_{\alpha}^{\s} (\boldsymbol{M})^* K_{\s'}(\cdot, \boldsymbol{w}).$$

From \cite[Page 5]{Up}, it follows that there exists a scalar $\gamma(\s')$ such that 
\begin{equation}\label{main thm:eqn2}
\displaystyle \sum_{|\s| \leq N} b_{\s} \sum_{\alpha = 1}^{d_{\s}} \psi_{\alpha}^{\s} (\boldsymbol{M}) \psi_{\alpha}^{\s} (\boldsymbol{M})^* K_{\s'}(\cdot, \boldsymbol{w}) = \gamma(\s') K_{\s'}(\cdot, \boldsymbol{w}).    
\end{equation}
To determine the value of $\gamma(\s')$, take inner product with $K(\cdot, \boldsymbol{w})$ to the both sides of Equation \eqref{main thm:eqn2} and observe that
\begin{flalign*}
\gamma(\s')K_{\s'}(\boldsymbol{w}, \boldsymbol{w}) &= \left\langle \displaystyle \sum_{|\s| \leq N} b_{\s} \sum_{\alpha = 1}^{d_{\s}} \psi_{\alpha}^{\s} (\boldsymbol{M}) \psi_{\alpha}^{\s} (\boldsymbol{M})^* K_{\s'}(\cdot, \boldsymbol{w}), K(\cdot, \boldsymbol{w}) \right\rangle\\
&= \sum_{|\s| \leq N} b_{\s} \left\langle K_{\s'}(\cdot, \boldsymbol{w}), \sum_{\alpha = 1}^{d_{\s}} \psi_{\alpha}^{\s}(\boldsymbol{M}) \psi_{\alpha}^{\s}(\boldsymbol{M})^* K(\cdot, \boldsymbol{w}) \right\rangle\\
&= \sum_{|\s| \leq N} b_{\s} \left\langle K_{\s'}(\cdot, \boldsymbol{w}), K_{\s}(\cdot, \boldsymbol{w}) K(\cdot, \boldsymbol{w}) \right\rangle\\
&= \sum_{|\s| \leq N} b_{\s} \left\langle K_{\s'}(\cdot, \boldsymbol{w}), K_{\s}(\cdot, \boldsymbol{w}) \sum_{\tilde{\s} \geq \underline 0} a_{\tilde{\s}} K_{\tilde{s}}(\cdot, \boldsymbol{w}) \right\rangle\\
\end{flalign*}
\begin{flalign*}
&= \sum_{|\s| \leq N} b_{\s} \left\langle K_{\s'}(\cdot, \boldsymbol{w}), \sum_{\tilde{\s} \geq \underline 0} a_{\tilde{\s}} \sum_{|\p| = |\s| + |\tilde{s}|} c^{\p}_{\s, \tilde{\s}} K_{\p}(\cdot, \boldsymbol{w}) \right\rangle\\
&= \left(\sum_{\substack{|\s| \leq N, \tilde{\s} \geq \underline 0,\\ |\s| + |\tilde{\s}| = |\s'|}} a_{\tilde{\s}} b_{\s} c^{\s'}_{\s, \tilde{\s}} \right)\frac{K_{\s'}(\boldsymbol{w}, \boldsymbol{w}) }{a_{\s'}}. 
\end{flalign*}
Here, the fifth equality holds because of Equation \eqref{exist:eqn1 b_s}. Therefore, we have 
\begin{equation}\label{main thm:eqn3}
\gamma(\s') =  \frac{1}{a_{\s'}} \left(\sum_{\substack{|\s| \leq N, \tilde{\s} \geq \underline 0,\\ |\s| + |\tilde{\s}| = |\s'|}} a_{\tilde{\s}} b_{\s} c^{\s'}_{\s, \tilde{\s}} \right). 
\end{equation}
Finally, evaluating the left hand side of Equation \eqref{main thm:eqn0} at $K_{\s'}(\cdot, \boldsymbol{w})$ and then, taking inner product with $K_{\s'}(\cdot, \boldsymbol{w})$, we obtain
\begin{flalign*}
0 &\leq \left\langle E_0^\perp K_{\s'}(\cdot, \boldsymbol{w}) - \displaystyle \sum_{\s > \underline 0} b_{\s} \sum_{\alpha = 1}^{d_{\s}} \psi_{\alpha}^{\s} (\boldsymbol{M}) E_0^\perp \psi_{\alpha}^{\s} (\boldsymbol{M})^* K_{\s'}(\cdot, \boldsymbol{w}), K_{\s'}(\cdot, \boldsymbol{w}) \right\rangle\\
&= \left\langle K_{\s'}(\cdot, \boldsymbol{w}) - \displaystyle \sum_{\s > \underline 0} b_{\s} \sum_{\alpha = 1}^{d_{\s}} \psi_{\alpha}^{\s} (\boldsymbol{M}) E_0^\perp \psi_{\alpha}^{\s} (\boldsymbol{M})^* K_{\s'}(\cdot, \boldsymbol{w}), K_{\s'}(\cdot, \boldsymbol{w}) \right\rangle\\
&= \|K_{\s'}(\cdot, \boldsymbol{w})\|^2 \left(1 - \gamma(\s')\right)\\
&= \frac{\|K_{\s'}(\cdot, \boldsymbol{w})\|^2 }{a_{\s'}}\left(a_{\s'} - \sum_{\substack{|\s| \leq N, \tilde{\s} \geq \underline 0,\\ |\s| + |\tilde{\s}| = |\s'|}} a_{\tilde{\s}} b_{\s} c^{\s'}_{\s, \tilde{\s}}\right)\\
&= \frac{\|K_{\s'}(\cdot, \boldsymbol{w})\|^2 }{a_{\s'}}\left(\sum_{\substack{\s, \tilde{\s} \geq \underline 0,\\ |\s| + |\tilde{\s}| = |\s'|}} a_{\tilde{\s}} b_{\s} c^{\s'}_{\s, \tilde{\s}} - \sum_{\substack{|\s| \leq N, \tilde{\s} \geq \underline 0,\\ |\s| + |\tilde{\s}| = |\s'|}} a_{\tilde{\s}} b_{\s} c^{\s'}_{\s, \tilde{\s}}\right)\\
&= \frac{\|K_{\s'}(\cdot, \boldsymbol{w})\|^2 }{a_{\s'}} b_{\s'}c^{\s'}_{\s', 0}.
\end{flalign*}
This implies that $b_{\s'} \geq 0$. Note that, in the above series of equalities, the third equality holds because of Equation \eqref{main thm:eqn3} and the fourth equality holds due to Equation \eqref{cha:eqn:1}. This proves that $K$ is a CNP kernel.
\end{proof}

% As a consequence of the theorem above, we observe that the tuple of multiplication operators by the coordinate functions on $\mathcal H^{(\nu)}(\Omega)$ does not possess characteristic function. This is recorded is the following corollary.

% \begin{corollary}
% Let $\Omega$ be a Cartan domain of rank $r > 1$. For any $\nu \in \mathcal{W}_{\Omega}$, suppose $\boldsymbol{M}^{(\nu)} = (M_{z_1}, \ldots, M_{z_d})$ denotes the tuple of multiplication operators by the coordinate functions on the weighted Bergman space $\mathbb A^{(\nu)}(\Omega)$. Then, the operator tuple $\boldsymbol{M}^{(\nu)}$ does not admit a characteristic function.    
% \end{corollary}

% \begin{proof}
% The proof immediately follows from Proposition \ref{BermanNotCNP} and Theorem \ref{cnp main}.   
% \end{proof}

\begin{remark}
Let $\Omega$ be a Cartan domain of rank $r > 1$ and for $\nu \in \mathcal{W}_{\Omega}$, let $K^{(\nu)}$ be the weighted Bergman kernel on $\Omega$. A direct consequence of Proposition \ref{BermanNotCNP} and Theorem \ref{cnp main} imply that there exists a $\frac{1}{K^{(\nu)}}$-contraction $\boldsymbol{T} = (T_1, \ldots, T_d)$ such that $\boldsymbol{T}$ does not admit a characteristic function. 

For $\nu > \frac{d}{r}$, the tuple of multiplication operators $\boldsymbol M^{(\nu)}$ on the weighted Bergman space $\mathbb A^{(\nu)}(\Omega)$ is a $\frac{1}{K^{(\nu)}}$-contraction, \cite[Theorem 3.3]{Arazy2}. Lemma \ref{lem2.1} yields that $\boldsymbol M^{(\nu)}$ is pure and $\Delta_{\boldsymbol{M}^{(\nu)}} = E_0$.  Recall that $\{\psi_{\s}^{\alpha} : \s \geq \underline 0, 1 \leq \alpha \leq d_{\s}\}$ is a complete orthogonal set of $\mathbb A^{(\nu)}(\Omega)$. For any $N \geq 0$, let $\mathcal{M}_N$ be the closed subspace of $\mathbb A^{(\nu)}(\Omega)$ spanned by the set $\{\psi_{\s}^{\alpha} : |\s| \leq N, 1 \leq \alpha \leq d_{\s}\}$ and $\boldsymbol T_{N} := P_{N} \boldsymbol M^{(\nu)}|_{\mathcal{M}_N}$, where $P_{N}$ is the orthogonal projection of $\mathbb A^{(\nu)}(\Omega)$ onto $\mathcal{M}_N$. It is easy to verify that $\boldsymbol{T}_N$ is a pure $\frac{1}{K^{(\nu)}}$-contraction and $\Delta_{\boldsymbol{T}_N} = E_{0}$. Furthermore, an adaption of the proof of \cite[Proposition 3.3]{Tiret} establishes that $\boldsymbol{T}_N$ does not admit any characteristic function for $\nu \geq \frac{d}{r} + 1$.
\end{remark}

\section{Characteristic Function: Construction}

In this section, we explicitly construct the characteristic function of a $d$-tuple of $\frac{1}{K}$-contraction for certain $\mathbb K$-invariant kernels $K$. Let $K = \sum_{\s \geq \underline 0} a_{\s}K_{\s}$ be a $\mathbb K$-invariant kernel on $\Omega$ and $\boldsymbol{T} = (T_1, \ldots, T_d)$ be a $\frac{1}{K}$-contraction. Then, recall that, the series
$$\displaystyle \sum_{\s > \underline 0} b_{\s} \sum_{\alpha = 1}^{d_{\s}} \psi_\alpha^{\s}(\boldsymbol{T}) \psi_\alpha^{\s}(\boldsymbol{T})^*$$
is convergent in strong operator topology and 
$$\displaystyle \sum_{\s > \underline 0} b_{\s} \sum_{\alpha = 1}^{d_{\s}} \psi_\alpha^{\s}(\boldsymbol{T}) \psi_\alpha^{\s}(\boldsymbol{T})^* \leq I.$$
%where $\{b_{\s}\}_{\s>\underline 0}$ is the sequence of real numbers, obtained in Proposition \ref{existence of b_s}, such that $1 - \frac{1}{K} = \sum_{\s >\underline 0} b_{\s} K_{\s}$. 
The positive square root of $I - \displaystyle \sum_{\s > \underline 0} b_{\s} \sum_{\alpha = 1}^{d_{\s}} \psi_\alpha^{\s}(\boldsymbol{T}) \psi_\alpha^{\s}(\boldsymbol{T})^*$ is denoted by $\Delta_{\boldsymbol{T}}$. Also, recall that a $\frac{1}{K}$-contraction $\boldsymbol{T} = (T_1, \ldots, T_d)$ is said to admit a characteristic function if there exists a Hilbert space $\mathcal{E}$ and an analytic function $\Theta_{\boldsymbol{T}} : \Omega \to \mathcal{B}(\mathcal{E}, \overline{\mbox{Ran}} \Delta_{\boldsymbol{T}})$ such that the corresponding multiplication operator $M_{\Theta_{\boldsymbol{T}}} : H_{K} \otimes \mathcal{E} \to H_{K} \otimes \overline{\mbox{Ran}}\,\, \Delta_{\boldsymbol{T}}$ satisfies
$I - V_{\boldsymbol{T}} V_{\boldsymbol{T}}^* = M_{\Theta_{\boldsymbol{T}}} M_{\Theta_{\boldsymbol{T}}}^*.$

% \begin{definition}\label{def:existence of cha fun}
% Let $\boldsymbol{T} = (T_1, \ldots, T_d)$ be a $1/K-$contraction. The tuple $\boldsymbol{T}$ is said to admit a characteristic function if there exists a Hilbert space $\mathcal{E}$ and an analytic function $\theta_{\boldsymbol{T}} : \Omega \to \mathcal{B}(\mathcal{E}, \overline{\mbox{Ran}} \Delta_{\boldsymbol{T}})$ such that the corresponding multiplication operator $M_{\Theta_{\boldsymbol{T}}} : H_{K} \otimes \mathcal{E} \to H_{K} \otimes \overline{\mbox{Ran}} \Delta_{\boldsymbol{T}}$ fulfilling the following identity
% $$I - V_{\boldsymbol{T}} V_{\boldsymbol{T}}^* = M_{\Theta_{\boldsymbol{T}}} M_{\Theta_{\boldsymbol{T}}}^*.$$
% \end{definition}

\subsection{Functional Calculus}
To explicitly compute the characteristic function of a $\frac{1}{K}$-contraction $\boldsymbol{T} = (T_1, \ldots, T_d)$ on a Hilbert space $\mathcal{H}$, we need the existence of the functional calculus $K(\boldsymbol{T}, \boldsymbol{w})$ for each $\boldsymbol{w} \in \Omega$. In order to define the operator $K(\boldsymbol{T}, \boldsymbol{w})$ for each $\boldsymbol{w} \in \Omega$, we assume the followings.
\begin{itemize}
    \item[(A)] For each $\boldsymbol{w} \in \Omega$, $\sum_{\s} a_{\s} K_{\s}(\boldsymbol{M}, \boldsymbol{w})$ and $\sum_{\s > \underline 0} b_{\s}K_{\s}(\boldsymbol{M}, \boldsymbol{w})$ converges in the strong operator topology.
    \item[(B)] For some $c > 0$, $\sum_{\s \geq \underline 0} a_{\s} \sum_{\alpha}^{d_{\s}} \psi_{\alpha}^{\s}(\boldsymbol{T}) \Delta_{\boldsymbol{T}}^2 \psi_{\alpha}^{\s}(\boldsymbol{T})^* \geq c I$.
\end{itemize}
For the rest of this section, we assume Conditions (A) and (B). \textit{Note that if $\boldsymbol{T}$ is a pure $\frac{1}{K}$-contraction, then $\boldsymbol{T}$ satisfies Condition (B)}.

We observe that if $H_K$ is the subspace consisting of holomorphic functions in some $L^2$ space, then Condition (A) automatically holds.

\begin{lemma} \label{pp4}
If $H_K$ is the subspace consisting of holomorphic functions in some $L^2$ space (with the inherited norm), then any bounded holomorphic function $f$ on $\Omega$ is a bounded multiplier of $H_K$, and the corresponding multiplication operator $M_f$ satisfies $\|M_f\|\leq\|f\|_\infty$.
\end{lemma}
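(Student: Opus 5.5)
The argument is the standard one for multipliers of subspaces of $L^2$. Suppose $H_K$ is a closed subspace of $L^2(\mu)$, for some measure $\mu$ on a set $X \supseteq \Omega$ or on $\overline\Omega$, and that $H_K$ consists of holomorphic functions and carries the norm inherited from $L^2(\mu)$. The plan is to compare multiplication by $f$ on $H_K$ with multiplication by $f$ on all of $L^2(\mu)$, where the bound $\|f\|_\infty$ is immediate.

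Let $f$ be bounded and holomorphic on $\Omega$, and let $g \in H_K$. The first step is to check that $fg \in H_K$. The product $fg$ is holomorphic. It lies in $L^2(\mu)$ with
$$\|fg\|_{L^2(\mu)}^2=\int |f|^2|g|^2\,d\mu\le \|f\|_\infty^2\|g\|^2.$$
To conclude membership in $H_K$, I would use the fact that $H_K$ is exactly the holomorphic part of $L^2(\mu)$: every holomorphic function in $L^2(\mu)$ belongs to $H_K$. If the hypothesis is instead read as saying only that $H_K$ is some closed subspace of holomorphic functions, I would argue as follows. Approximate $f$ by the dilates $f_t(\boldsymbol z)=f(t\boldsymbol z)$ with $t<1$. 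Each $f_t$ is holomorphic on a neighbourhood of $\overline\Omega$, so its Taylor series converges uniformly on $\overline\Omega$ (Corollary \ref{pp3}). Then $f_t g$ is an $L^2(\mu)$-limit of $pg$ with $p$ a polynomial. Since polynomials are multipliers of $H_K$ (this is where admissibility, Lemma \ref{bounded}, enters), each $pg$ lies in $H_K$, and closedness gives $f_tg\in H_K$ with $\|f_tg\|\le\|f\|_\infty\|g\|$.

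The second step is to let $t\to1$. The bounded family $\{f_tg\}$ has a weakly convergent subsequence in $H_K$. Weak convergence in a reproducing kernel Hilbert space implies pointwise convergence, because point evaluations are continuous. Since $f_t g\to fg$ pointwise, the weak limit must be $fg$. Hence $fg\in H_K$, and by weak lower semicontinuity of the norm, $\|fg\|\le\|f\|_\infty\|g\|$. This gives boundedness of $M_f$ with $\|M_f\|\le\|f\|_\infty$.

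The main obstacle is the first step: deciding exactly what ``the subspace consisting of holomorphic functions'' means. If it means all $L^2$-holomorphic functions, the inequality alone finishes the proof and no approximation is needed. Otherwise the dilation argument above is required. I would state the first interpretation as primary and add the dilation argument as the fallback.
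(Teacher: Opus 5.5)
Your primary argument is the paper's proof: the paper reads the hypothesis as $H_K$ being the whole space of holomorphic functions in $L^2(\mu)$, so the single estimate $\int|fu|^2\le\|f\|_\infty^2\int|u|^2$ gives both $fu\in H_K$ and $\|M_f\|\le\|f\|_\infty$. The dilation fallback is not needed for that reading, and two of its justifications do not apply here: Lemma \ref{bounded} assumes $K$ is CNP, which is not a hypothesis of this lemma, and Corollary \ref{pp3} concerns $\mathbb K$-invariant functions of $(\boldsymbol z,\boldsymbol w)$; if you keep the fallback, get polynomial multipliers instead from the density of polynomials in $H_K$ (all $a_{\underline s}>0$) together with boundedness of coordinate multiplication on $L^2(\mu)$, and get uniform convergence from the homogeneous expansion of $f_t$ on the circular domain $t^{-1}\Omega$.
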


\begin{proof} Immediate from $\|M_fu\|^2 = \int |fu|^2 \leq \|f\|_\infty^2 \int |u|^2 = \|f\|_\infty^2 \|u\|^2$.

\end{proof}

\begin{corollary} \label{pp5}
Under the hypothesis of Lemma \ref{pp4}, for any $F$ as in Proposition \ref{pp2} and $\boldsymbol w\in\Omega$, $F(\cdot,\boldsymbol w)$ is a bounded multiplier on $H_K$. In particular, $K(\cdot,\boldsymbol w)$ is a bounded multiplier, and if $K$ is zero-free, then so is $1/K(\cdot,\boldsymbol w)$.

\end{corollary}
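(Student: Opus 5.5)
The plan is to reduce everything to Lemma \ref{pp4} by checking that $F(\cdot,\boldsymbol w)$ is a bounded holomorphic function on $\Omega$ for each fixed $\boldsymbol w\in\Omega$. By Corollary \ref{pp3}, $F(\boldsymbol z,\boldsymbol w)=F(\boldsymbol z/t,t\boldsymbol w)$ for all $\boldsymbol z\in t\Omega$; more usefully, $F(\cdot,\boldsymbol w)$ extends holomorphically to a neighborhood of $\overline\Omega$. Concretely, since $\Omega$ is a bounded circular convex domain, $\boldsymbol w\in t_0\Omega$ for some $t_0<1$. We write $\boldsymbol w=t_0\boldsymbol w'$ with $\boldsymbol w'\in\Omega$.

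The identity of Corollary \ref{pp3}, $F(\boldsymbol z,t_0\boldsymbol w')=F(t_0\boldsymbol z,\boldsymbol w')$, gives
$$\sup_{\boldsymbol z\in\Omega}|F(\boldsymbol z,\boldsymbol w)|=\sup_{\boldsymbol z\in\Omega}|F(t_0\boldsymbol z,\boldsymbol w')|\le \sup_{\boldsymbol z\in t_0\overline\Omega}|F(\boldsymbol z,\boldsymbol w')|<\infty.$$
Finiteness holds because $F(\cdot,\boldsymbol w')$ is holomorphic, hence continuous, on $\Omega\supset t_0\overline\Omega$, and $t_0\overline\Omega$ is compact. Alternatively, one can use the expansion of Proposition \ref{pp2}. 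Cauchy--Schwarz and the maximum principle on the Shilov boundary, as in the proof of Proposition \ref{pp1}, give $|K_{\s}(\boldsymbol z,\boldsymbol w)|\le K_{\s}(e,e)\,t_0^{|\s|}$ for $\boldsymbol z\in\Omega$. Hence $|F(\boldsymbol z,\boldsymbol w)|\le S(t_0^{1/2})$ up to the homogeneity bookkeeping, and this is finite by \eqref{sC}. Either way, $F(\cdot,\boldsymbol w)$ is a bounded holomorphic function on $\Omega$. Lemma \ref{pp4} then yields that it is a bounded multiplier with $\|M_{F(\cdot,\boldsymbol w)}\|\le\|F(\cdot,\boldsymbol w)\|_\infty$.

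For the particular cases, Proposition \ref{pp2} applies to $F=K$, which is $\mathbb K$-invariant and sesqui-holomorphic. It also applies to $F=1/K$ when $K$ is zero-free, since then $1/K$ is still $\mathbb K$-invariant and holomorphic in $(\boldsymbol z,\bar{\boldsymbol w})$, as noted after Proposition \ref{pp2}. So both are covered by the general statement.

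The only mildly delicate point is making sure the sup is over all of $\Omega$ and not just compacta. This is exactly what the scaling trick of Corollary \ref{pp3} handles, by moving the dilation from $\boldsymbol w$ onto $\boldsymbol z$. No further obstacle is expected.
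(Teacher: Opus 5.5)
Your proposal is correct and follows the paper's own route. The paper's proof is just ``immediate from Corollary \ref{pp3}'': the dilation identity makes $F(\cdot,\boldsymbol w)$ bounded on $\Omega$, and Lemma \ref{pp4} then gives the multiplier bound, which is exactly what you spell out. Your alternative series estimate is also sound with no loose ends, since $t_0^{|\s|}K_{\s}(e,e)=K_{\s}(\sqrt{t_0}\,e,\sqrt{t_0}\,e)$, so the bound is precisely $S(\sqrt{t_0})$.
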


\begin{proof} Immediate from Corollary \ref{pp3}. \end{proof}

\begin{corollary} \label{pp6}
Again under the hypothesis of Lemma \ref{pp4}, for any $F$ as in Proposition \ref{pp2} and $\boldsymbol w\in\Omega$, the series
$$ \sum_{\s} c_{\s} K_{\s}(\boldsymbol{M}, \boldsymbol w) ,  $$
where $\boldsymbol{M}$ denotes the commuting tuple of multiplications by the coordinate functions, converges to the operator of multiplication by $F(\cdot,\boldsymbol w)$ on $\mathcal H_K$ in operator norm. In particular, this holds for $F=K$, and if $K$ is zero-free, then also for $F=1/K$.
\end{corollary}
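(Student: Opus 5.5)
The plan is to reduce operator-norm convergence of the partial sums to a uniform bound on $\Omega$, using Lemma \ref{pp4}, and to get that uniform bound from Corollary \ref{pp3} together with part (i) of Proposition \ref{pp1}.

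Fix $\boldsymbol w\in\Omega$ and write $F(\boldsymbol z,\boldsymbol w)=\sum_{\s} c_{\s}K_{\s}(\boldsymbol z,\boldsymbol w)$ as in Proposition \ref{pp2}. For each $\s$, the function $K_{\s}(\cdot,\boldsymbol w)=\sum_\alpha \overline{\psi^{\s}_\alpha(\boldsymbol w)}\psi^{\s}_\alpha$ is a polynomial. Hence $K_{\s}(\boldsymbol M,\boldsymbol w)$ is exactly the operator of multiplication by the polynomial $K_{\s}(\cdot,\boldsymbol w)$. The coordinate multipliers are bounded by Lemma \ref{pp4}, since coordinates are bounded on the bounded domain $\Omega$.

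For a finite set $S$ of signatures, the partial sum $\sum_{\s\in S}c_{\s}K_{\s}(\boldsymbol M,\boldsymbol w)$ is therefore $M_{F_S}$, where $F_S:=\sum_{\s\in S}c_{\s}K_{\s}(\cdot,\boldsymbol w)$. By linearity and Lemma \ref{pp4},
\[
\Big\|M_{F(\cdot,\boldsymbol w)}-\sum_{\s\in S}c_{\s}K_{\s}(\boldsymbol M,\boldsymbol w)\Big\| \le \sup_{\boldsymbol z\in\Omega}\Big|\sum_{\s\notin S}c_{\s}K_{\s}(\boldsymbol z,\boldsymbol w)\Big|.
\]
Here $F(\cdot,\boldsymbol w)$ is a bounded multiplier by Corollary \ref{pp5}. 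So it suffices to prove that the series converges absolutely and uniformly in $\boldsymbol z\in\Omega$ for fixed $\boldsymbol w$; then the tail supremum tends to zero along any exhaustion of the signatures, e.g.\ ordered by $|\s|$.

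For the uniform convergence, pick $t\in(0,1)$ with $\boldsymbol w/t\in\Omega$, i.e.\ $\boldsymbol w=t\boldsymbol w'$ with $\boldsymbol w'\in\Omega$ (possible since $\Omega$ is an open ball for a norm, so $\|\boldsymbol w\|<t<1$ works). By homogeneity, as in Corollary \ref{pp3}, we have $K_{\s}(\boldsymbol z,\boldsymbol w)=K_{\s}(t\boldsymbol z,\boldsymbol w')$. Cauchy--Schwarz then gives
\[
|K_{\s}(\boldsymbol z,\boldsymbol w)|\le K_{\s}(t\boldsymbol z,t\boldsymbol z)^{1/2}K_{\s}(\boldsymbol w',\boldsymbol w')^{1/2}.
\]
By the maximum-principle argument in the proof of Proposition \ref{pp1}(i), this is bounded by
\[
K_{\s}(te,te)^{1/2}K_{\s}(t'e,t'e)^{1/2}=K_{\s}(\sqrt{tt'}e,\sqrt{tt'}e)
\]
for $\boldsymbol z\in\Omega$, where $t'<1$ is chosen with $\boldsymbol w'\in\overline{t'\Omega}$. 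The equality uses the homogeneity $K_{\s}(se,se)=s^{2|\s|}K_{\s}(e,e)$. Therefore
\[
\sum_{\s}|c_{\s}|\sup_{\boldsymbol z\in\Omega}|K_{\s}(\boldsymbol z,\boldsymbol w)|\le S(\sqrt{tt'})<\infty,
\]
by \eqref{sC}, which Proposition \ref{pp2} guarantees. This gives the required uniform absolute convergence.

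The special cases follow immediately: $F=K$ is covered by Proposition \ref{pp2}, and when $K$ is zero-free so is $F=1/K$ (equivalently $1-1/K$), as noted after Proposition \ref{pp2}.

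The main obstacle is the uniform-in-$\boldsymbol z$ estimate. One must push both arguments strictly inside $\Omega$ so that \eqref{sC} applies; the homogeneity trick from Corollary \ref{pp3} is what transfers the "interior room" available in $\boldsymbol w$ over to $\boldsymbol z$.
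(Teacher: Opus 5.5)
Your proposal is correct and follows the paper's proof. The paper also uses Lemma \ref{pp4} to reduce operator-norm convergence to uniform convergence on $\Omega$ of the partial sums $\sum_{|\s|\le m}c_{\s}K_{\s}(\cdot,\boldsymbol w)$, which it simply attributes to Proposition \ref{pp2}, while you verify that step explicitly via Cauchy--Schwarz, the Shilov-boundary bound from Proposition \ref{pp1}(i), and \eqref{sC}. The homogeneity shift is not actually needed: Cauchy--Schwarz alone already gives $|K_{\s}(\boldsymbol z,\boldsymbol w)|\le K_{\s}(e,e)^{1/2}K_{\s}(\boldsymbol w,\boldsymbol w)^{1/2}\le \rho^{|\s|}K_{\s}(e,e)$ whenever $\boldsymbol w\in\rho\overline\Omega$ with $\rho<1$.
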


\begin{proof} By the lemma \ref{pp4}, $\|f_m-f\|_\infty\to0$ implies $\|M_{f_m}-M_f\|\to0$. Take $f_m:=\sum_{|\s|\leq m} c_{\s} K_{\s}(\cdot,\boldsymbol w)$ and use Proposition \ref{pp2}.
\end{proof}

The corollary above proves that if the norm of the reproducing kernel Hilbert space $H_K$ is given by an integral, then Condition (A) is always satisfied. Now, we provide a few necessary lemmas. 

\begin{lemma}\label{V_T injective}
The operator $V_{\boldsymbol{T}} : \mathcal{H} \to H_{K} \otimes \overline{\mbox{Ran}}\,\, \Delta_{\boldsymbol{T}}$, given by Equation \eqref{char:eqn:def V_T}, is bounded below.   
\end{lemma}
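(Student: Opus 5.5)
The plan is to compute $\|V_{\boldsymbol T}h\|^2$ explicitly and then bound it from below using Condition (B). The orthonormal basis $\{\psi_\alpha^{\s}\}$ is orthonormal for the Fischer--Fock inner product. In $H_K$ the same family is still orthogonal, with $\|\psi_\alpha^{\s}\|_{H_K}^2 = 1/a_{\s}$; this follows from $K=\sum_{\s} a_{\s}\sum_\alpha \psi_\alpha^{\s}\overline{\psi_\alpha^{\s}}$, so that $\{\sqrt{a_{\s}}\,\psi_\alpha^{\s}\}$ is an orthonormal basis of $H_K$. Consequently the summands $a_{\s}\psi_\alpha^{\s}\otimes \Delta_{\boldsymbol T}\psi_\alpha^{\s}(\boldsymbol T)^*h$ in \eqref{char:eqn:def V_T} are mutually orthogonal in $H_K\otimes\overline{\mbox{Ran}}\,\Delta_{\boldsymbol T}$, and
$$\|V_{\boldsymbol T}h\|^2=\sum_{\s\geq \underline 0} a_{\s}^2\cdot\frac{1}{a_{\s}}\sum_{\alpha=1}^{d_{\s}}\|\Delta_{\boldsymbol T}\psi_\alpha^{\s}(\boldsymbol T)^*h\|^2=\Big\langle \sum_{\s\geq \underline 0} a_{\s}\sum_{\alpha=1}^{d_{\s}}\psi_\alpha^{\s}(\boldsymbol T)\Delta_{\boldsymbol T}^2\psi_\alpha^{\s}(\boldsymbol T)^*h,\,h\Big\rangle .$$

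The interchange of the sum and the inner product is justified by Proposition \ref{lem:cha:2}: the partial sums $S_N$ converge strongly, and all terms are nonnegative. This convergence also shows that the series defining $V_{\boldsymbol T}h$ converges in norm, since its partial sums are Cauchy by orthogonality, and it recovers Corollary \ref{cor:vt contraction}.

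Next we invoke Condition (B): the strong limit $S:=\sum_{\s} a_{\s}\sum_\alpha \psi_\alpha^{\s}(\boldsymbol T)\Delta_{\boldsymbol T}^2\psi_\alpha^{\s}(\boldsymbol T)^*$ satisfies $S\geq cI$. Hence $\|V_{\boldsymbol T}h\|^2=\langle Sh,h\rangle\geq c\|h\|^2$, that is, $\|V_{\boldsymbol T}h\|\geq \sqrt c\,\|h\|$, so $V_{\boldsymbol T}$ is bounded below. (In the pure case $S=I$, and $V_{\boldsymbol T}$ is an isometry, in agreement with Theorem \ref{isometry}.)

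The only delicate point is the orthogonality computation of the norm in $H_K\otimes\overline{\mbox{Ran}}\,\Delta_{\boldsymbol T}$. The crucial fact is that the $H_K$-norm of $\psi_\alpha^{\s}$ equals $a_{\s}^{-1/2}$ and that different $(\s,\alpha)$ are orthogonal in $H_K$. This is where $\mathbb K$-invariance, through the diagonal form of $K$ in the basis $\psi_\alpha^{\s}$, is used; the standard RKHS description of a kernel given by an orthogonal expansion with positive coefficients $a_{\s}$ settles it.
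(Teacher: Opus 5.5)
Your proposal is correct and follows the paper's proof: identify $\|V_{\boldsymbol T}h\|^2$ with $\bigl\langle \sum_{\underline{s}} a_{\underline{s}}\sum_{\alpha}\psi_\alpha^{\underline{s}}(\boldsymbol T)\Delta_{\boldsymbol T}^2\psi_\alpha^{\underline{s}}(\boldsymbol T)^*h,\,h\bigr\rangle$ and then apply Condition (B) to get $\|V_{\boldsymbol T}h\|\geq\sqrt{c}\,\|h\|$. Your version also justifies the first equality, using the orthonormality of $\{\sqrt{a_{\underline{s}}}\,\psi_\alpha^{\underline{s}}\}$ in $H_K$ and the strong convergence from Proposition \ref{lem:cha:2}, which the paper leaves implicit.
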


\begin{proof}
Let $h$ be an element in $\mathcal{H}$. Then, we have
\begin{flalign*}
\left\langle V_{\boldsymbol{T}} h, V_{\boldsymbol{T}} h \right\rangle
= \left\langle \displaystyle \sum_{\s \geq \underline 0} a_{\s} \sum_{\alpha = 1}^{d_{\s}} \psi_{\alpha}^{\s}(\boldsymbol{T}) \Delta_{\boldsymbol{T}}^2 \psi_{\alpha}^{\s}(\boldsymbol{T})^* h , h \right\rangle
 \geq c \langle h, h\rangle.
\end{flalign*}
This shows that $\|V_{\boldsymbol{T}} h\| \geq \sqrt{c}\|h\|$ holds for every $h \in \mathcal{H}$. Therefore, $V_{\boldsymbol{T}}$ is bounded below.
\end{proof}

\begin{remark}\label{new:rem}
In particular, $V_{\boldsymbol{T}}$ has closed range, hence (by Banach's Closed Range Theorem) so has $V^*_{\boldsymbol{T}}$; thus Ran $V^*_{\boldsymbol{T}}$ is all of $\mathcal H$.    
\end{remark}

\begin{lemma}\label{lem:fun cal}
For each $\boldsymbol{w} \in \Omega$, both the series 
$$\displaystyle \sum_{\s \geq \underline 0} a_{\s} \sum_{\alpha = 0}^{d_{\s}} \overline{\psi_{\alpha}^{\s}(\boldsymbol{w})} \psi_{\alpha}^{\s}(\boldsymbol{T})\,\,\mbox{and}\,\,\displaystyle \sum_{\s > \underline 0} b_{\s} \sum_{\alpha = 0}^{d_{\s}} \overline{\psi_{\alpha}^{\s}(\boldsymbol{w})} \psi_{\alpha}^{\s}(\boldsymbol{T})$$
converge in strong operator topology.
\end{lemma}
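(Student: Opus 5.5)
The plan is to transfer the convergence from the model operators $\boldsymbol M$ to $\boldsymbol T$ using the intertwining relation for $V_{\boldsymbol T}$ and the fact that $V_{\boldsymbol T}^*$ is onto. By Corollary \ref{cor:vt contraction} (equivalently Theorem \ref{isometry}), for every polynomial $p$ we have
$$V_{\boldsymbol T}^*\bigl(p(\boldsymbol M)\otimes I\bigr) = p(\boldsymbol T)V_{\boldsymbol T}^*.$$
For each $N$, apply this to the polynomial
$$p_N := \sum_{|\s|\le N} a_{\s}\sum_{\alpha}\overline{\psi_\alpha^{\s}(\boldsymbol w)}\,\psi_\alpha^{\s} = \sum_{|\s|\le N} a_{\s} K_{\s}(\cdot,\boldsymbol w).$$
Then the $N$-th partial sum of the first series satisfies
$$p_N(\boldsymbol T)V_{\boldsymbol T}^* = V_{\boldsymbol T}^*\bigl(p_N(\boldsymbol M)\otimes I\bigr).$$
The same holds for the partial sums $q_N := \sum_{0<|\s|\le N} b_{\s}K_{\s}(\cdot,\boldsymbol w)$ of the second series.

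First, I will use Condition (A): $p_N(\boldsymbol M)$ and $q_N(\boldsymbol M)$ converge strongly on $H_K$. Tensoring with the identity preserves strong convergence, by checking on simple tensors and using the uniform bound discussed below. Hence $V_{\boldsymbol T}^*(p_N(\boldsymbol M)\otimes I)\xi$ converges for every $\xi$. Second, by Lemma \ref{V_T injective} and Remark \ref{new:rem}, $V_{\boldsymbol T}^*$ is surjective onto $\mathcal H$. So every $h\in\mathcal H$ has the form $h=V_{\boldsymbol T}^*\xi$, and then $p_N(\boldsymbol T)h = V_{\boldsymbol T}^*(p_N(\boldsymbol M)\otimes I)\xi$ converges. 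This gives strong convergence of both series on all of $\mathcal H$ directly, without needing a density argument on the $\mathcal H$ side.

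The delicate point is passing from strong convergence of $p_N(\boldsymbol M)$ on $H_K$ to strong convergence of $p_N(\boldsymbol M)\otimes I$ on $H_K\otimes\overline{\mathrm{Ran}}\,\Delta_{\boldsymbol T}$. By the uniform boundedness principle, strong convergence of $p_N(\boldsymbol M)$ gives $\sup_N\|p_N(\boldsymbol M)\|<\infty$. Moreover $\|p_N(\boldsymbol M)\otimes I\| = \|p_N(\boldsymbol M)\|$. The partial sums therefore converge on finite sums of simple tensors, which are dense, and the uniform bound upgrades this to strong convergence everywhere. If one prefers to avoid the surjectivity of $V_{\boldsymbol T}^*$, an alternative is this: the operators $p_N(\boldsymbol T)$ are uniformly bounded, since $p_N(\boldsymbol T)V_{\boldsymbol T}^*$ is uniformly bounded and $V_{\boldsymbol T}^*$ has a bounded right inverse $V_{\boldsymbol T}(V_{\boldsymbol T}^*V_{\boldsymbol T})^{-1}$ by Lemma \ref{V_T injective}. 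Together with convergence on the range of $V_{\boldsymbol T}^*$, this gives the same conclusion.
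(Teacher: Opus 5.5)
Your proof is correct and matches the paper's argument: transfer convergence through the intertwining $V_{\boldsymbol T}^*\bigl(p(\boldsymbol M)\otimes I\bigr)=p(\boldsymbol T)V_{\boldsymbol T}^*$, use Condition (A) for $\boldsymbol M\otimes I$, and conclude via the surjectivity of $V_{\boldsymbol T}^*$ from Lemma \ref{V_T injective} and Remark \ref{new:rem}. Your uniform-boundedness argument for passing from strong convergence on $H_K$ to strong convergence on $H_K\otimes\overline{\mathrm{Ran}}\,\Delta_{\boldsymbol T}$ justifies a step that the paper only asserts.
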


\begin{proof}
Condition (A) implies that the series $\sum_{\s \geq \underline 0} a_{\s} \sum_{\alpha = 0}^{d_{\s}} \overline{\psi_{\alpha}^{\s}(\boldsymbol{w})} \psi_{\alpha}^{\s}(\boldsymbol{M}) \otimes I_{\overline{\mbox{Ran}} \Delta_{\boldsymbol{T}}}$ converges in strong operator topology. Let $\epsilon > 0$ and $f \in H_{K} \otimes \overline{\mbox{Ran}}\,\, \Delta_{\boldsymbol{T}}$. Then there exists $N \geq 1$ such that 
$$\| \displaystyle \sum_{|\s| = N_1}^{N_2} a_{\s} \sum_{\alpha = 0}^{d_{\s}} \overline{\psi_{\alpha}^{\s}(\boldsymbol{w})} \psi_{\alpha}^{\s}(\boldsymbol{M}) \otimes I_{\overline{\mbox{Ran}} \,\,\Delta_{\boldsymbol{T}}} f\| < \epsilon$$
holds for every $N_1, N_2 \geq N$. This implies that 
\begin{flalign*}
&\| \displaystyle \sum_{|\s| = N_1}^{N_2} a_{\s} \sum_{\alpha = 0}^{d_{\s}} \overline{\psi_{\alpha}^{\s}(\boldsymbol{w})} \psi_{\alpha}^{\s}(\boldsymbol{T}) V_{\boldsymbol{T}}^* f\| \\
&= \| V_{\boldsymbol{T}}^* \displaystyle \sum_{|\s| = N_1}^{N_2} a_{\s} \sum_{\alpha = 0}^{d_{\s}} \overline{\psi_{\alpha}^{\s}(\boldsymbol{w})} \psi_{\alpha}^{\s}(\boldsymbol{M}) \otimes I_{\overline{\mbox{Ran}}\,\, \Delta_{\boldsymbol{T}}} f\|\\
&\leq \| \displaystyle \sum_{|\s| = N_1}^{N_2} a_{\s} \sum_{\alpha = 0}^{d_{\s}} \overline{\psi_{\alpha}^{\s}(\boldsymbol{w})} \varphi_{\alpha}^{\s}(\boldsymbol{M}) \otimes I_{\overline{\mbox{Ran}} \,\,\Delta_{\boldsymbol{T}}} f\| < \epsilon,\,\,N_1, N_2 \geq N.
\end{flalign*}
Here, the first inequality occurs since $V_{\boldsymbol{T}}$ is a contraction. This proves that the series $ \sum_{\s \geq \underline 0} a_{\s} \sum_{\alpha = 0}^{d_{\s}} \overline{\psi_{\alpha}^{\s}(\boldsymbol{w})} \psi_{\alpha}^{\s}(\boldsymbol{T})$ converges in strong operator topology on $\mbox{Ran} V_{\boldsymbol{T}}^*$. Remark \ref{new:rem} yields that $\mbox{Ran}\,\,V_{\boldsymbol{T}}^*$ is $\mathcal{H}$. This proves that $ \sum_{\s \geq \underline 0} a_{\s} \sum_{\alpha = 0}^{d_{\s}} \overline{\psi_{\alpha}^{\s}(\boldsymbol{w})} \psi_{\alpha}^{\s}(\boldsymbol{T})$ converges in strong operator topology on $\mathcal{H}$.

A similar argument shows that the series $\sum_{\s > \underline 0} b_{\s} \sum_{\alpha = 0}^{d_{\s}} \overline{\psi_{\alpha}^{\s}(\boldsymbol{w})} \psi_{\alpha}^{\s}(\boldsymbol{T})$ converges in the strong operator topology.
\end{proof}

Recall that if $K$ is a $\mathbb K$-invariant kernel, then there exists a sequence $\{b_{\s}\}$ of real numbers such that $1 - \frac{1}{K} = \sum_{\s>\underline 0} b_{\s} K_{\s}$ holds. This implies that
\begin{equation}\label{cha:fun:cal:inv}
\left( \sum_{\s \geq \underline 0} a_{\s} \sum_{\alpha = 0}^{d_{\s}} \overline{\psi_{\alpha}^{\s}(\boldsymbol{w})} \psi_{\alpha}^{\s}(\boldsymbol{T}) \right)  \left(I - \sum_{\s > \underline 0} b_{\s} \sum_{\alpha = 0}^{d_{\s}} \overline{\psi_{\alpha}^{\s}(\boldsymbol{w})} \psi_{\alpha}^{\s}(\boldsymbol{T}) \right)  = I.    
\end{equation}
For any $\boldsymbol{w} \in \Omega$, let 
$$K_{\boldsymbol{w}} (\boldsymbol{T}) = \displaystyle \sum_{\s \geq \underline 0} a_{\s} \sum_{\alpha}^{d_{\s}} \overline{\psi_{\alpha}^{\s}(\boldsymbol{w})} \psi_{\alpha}^{\s} (\boldsymbol{T}).$$
Then, for any $\boldsymbol{z} \in \Omega,$ we have 
$$K_{\boldsymbol{z}} (\boldsymbol{T})^* = \displaystyle \sum_{\s \geq \underline 0} a_{\s} \sum_{\alpha}^{d_{\s}} \psi_{\alpha}^{\s}(\boldsymbol{z}) \psi_{\alpha}^{\s} (\boldsymbol{T})^*.$$
Equation \eqref{cha:fun:cal:inv} implies that
$$K_{\boldsymbol{z}}(\boldsymbol{T})^* = \left(I - \sum_{\s > \underline 0} b_{\s} \sum_{\alpha = 0}^{d_{\s}} \psi_{\alpha}^{\s}(\boldsymbol{z}) \psi_{\alpha}^{\s}(\boldsymbol{T})^* \right)^{-1}.$$

\subsection{Characteristic Function}
In this subsection, we give an explicit construction of the characteristic function of $\boldsymbol{T}$ with the assumption that $K$ is a CNP kernel. The CNP property of $K$ implies that each $b_{\s}$ must be a non-negative real number. Let $\tilde{\mathcal{H}} = \bigoplus_{\s > \underline 0} \bigoplus_{\alpha}^{d_{\s}} \mathcal{H}$. Note that $\tilde{\mathcal{H}}$ is a direct sum of countably many copies of the Hilbert space $\mathcal{H}$. Every $\boldsymbol{z} \in \Omega$ gives rise to an operator 
\begin{equation}\label{cha:eqn:op Z}
    \boldsymbol{Z} = \left(\sqrt{b_{\s}}\psi_\alpha^{d_{\s}}(\boldsymbol{z}) I\right)_{\s>\underline 0, \alpha=1}^{d_{\s}} : \tilde{\mathcal{H}} \to \mathcal{H},
\end{equation}
mapping $\left(h_{\alpha}^{\s}\right)_{\s>\underline 0, \alpha=1}^{d_{\s}}$ to $\sum_{\s>\underline 0} \sqrt{b_{\s}} \sum_{\alpha=1}^{d_{\s}} \psi_{\alpha}^{\s}(\boldsymbol{z}) h_{\alpha}^{\s}$ and
\begin{flalign*}
\|\boldsymbol{Z}\left(h_{\alpha}^{\s}\right)_{\s>\underline 0, \alpha=1}^{d_{\s}}\|^2 &= \|\displaystyle \sum_{\s>\underline 0} \sqrt{b_{\s}} \sum_{\alpha=1}^{d_{\s}} \psi_{\alpha}^{\s}(\boldsymbol{z}) h_{\alpha}^{\s}\|^2\\
&\leq \left(\displaystyle \sum_{\s>\underline 0} \sum_{\alpha = 1}^{d_{\s}} b_{\s} |\psi_{\alpha}^{\s}(\boldsymbol{z})|^2\right) \left(\displaystyle \sum_{\s>0} \sum_{\alpha = 1}^{d_{\s}} \|h_{\alpha}^{\s}\|^2\right)\\
&= \left(1 - \frac{1}{K(\boldsymbol{z}, \boldsymbol{z})}\right)\left(\displaystyle \sum_{\s>\underline 0} \sum_{\alpha = 1}^{d_{\s}} \|h_{\alpha}^{\s}\|^2\right). 
\end{flalign*}
This proves that $\| \boldsymbol{Z}\| \leq \left(1 - \frac{1}{K(\boldsymbol{z}, \boldsymbol{z})} \right)^{1/2} < 1$. Consider the operator 
$$\tilde{\boldsymbol{T}} := \left(\sqrt{b_{\s}} \psi_{\alpha}^{\s}(\boldsymbol{T})\right)_{\s > \underline 0, \alpha = 1}^{d_{\s}} : \tilde{\mathcal{H}} \to \mathcal{H},$$ 
defined by,
$$\tilde{\boldsymbol{T}} \left(\left(h_{\alpha}^{\s}\right)_{\s>\underline 0, \alpha=1}^{d_{\s}}\right) = \displaystyle \sum_{\s > \underline 0} \sum_{\alpha = 1}^{d_{\s}} \sqrt{b_{\s}} \psi_{\alpha}^{\s}(\boldsymbol{T}) h_{\alpha}^{\s}.$$
Then, it is easily verified that $\tilde{\boldsymbol{T}}^* : \mathcal{H} \to \tilde{\mathcal{H}}$ is given by $\tilde{\boldsymbol{T}}^*h = \left( \sqrt{b_{\s}} \psi_{\alpha}^{\s}(\boldsymbol{T})^*h \right)_{\s>0, \alpha = 1}^{d_{\s}}$, $h \in \mathcal{H}$. Using the fact that $\boldsymbol{T}$ is a $\frac{1}{K}$-contraction, it is straight forward to verify that $\tilde{\boldsymbol{T}}^*$ is a contraction. Also, we have $\Delta_{\boldsymbol{T}}^2 = I_{\mathcal{H}} - \tilde{\boldsymbol{T}} \tilde{\boldsymbol{T}}^*.$ Let $D_{\tilde{\boldsymbol{T}}} := \left(I_{\tilde{\mathcal{H}}} - \tilde{\boldsymbol{T}}^* \tilde{\boldsymbol{T}} \right)^\frac{1}{2}$ and $\mathcal{D}_{\tilde{\boldsymbol{T}}} := \overline{\mbox{Ran}} D_{\tilde{\boldsymbol{T}}}$. The identity
\begin{equation}\label{cha:eqn:TDT}
\tilde{\boldsymbol{T}} D_{\tilde{\boldsymbol{T}}} = \Delta_{\boldsymbol{T}} \tilde{\boldsymbol{T}}
\end{equation}
follows from a direct computation. For each $\boldsymbol{z} \in \Omega$, since the operator $\boldsymbol{Z}$, defined by Equation \eqref{cha:eqn:op Z}, is a strict contraction, the operator $I_{\mathcal{H}} - \boldsymbol{Z}\tilde{\boldsymbol{T}}^*$ is invertible. Now, we are prepared to provide the definition of the characteristic function of $\boldsymbol{T}$.  

\begin{definition}\label{def:cha fun}
The characteristic function $\theta_{\boldsymbol{T}} : \Omega \to \mathcal{B}(\mathcal{D}_{\tilde{\boldsymbol{T}}}, \overline{\mbox{Ran}}\,\,\Delta_{\boldsymbol{T}})$ of $\boldsymbol{T}$ is defined by
\begin{equation}\label{cha:eqn:cha}
 \theta_{\boldsymbol{T}}(\boldsymbol{z}) := \left(-\tilde{\boldsymbol{T}} + \Delta_{\boldsymbol{T}}(I_{\mathcal{H}} - \boldsymbol{Z}\tilde{\boldsymbol{T}}^*)^{-1}\boldsymbol{Z}D_{\tilde{\boldsymbol{T}}}\right)|_{\mathcal{D}_{\tilde{\boldsymbol{T}}}},\,\,\boldsymbol{z} \in \Omega,   
\end{equation}
where, for each $\boldsymbol{z}$ in $\Omega$, the operator $\boldsymbol{Z}$ is given in Equation \eqref{cha:eqn:op Z}.
\end{definition}
The relation $\tilde{\boldsymbol{T}} D_{\tilde{\boldsymbol{T}}} = \Delta_{\boldsymbol{T}} \tilde{\boldsymbol{T}}$ implies that $\theta_{\boldsymbol{T}}(\boldsymbol{z}) \in \mathcal{B}(\mathcal{D}_{\tilde{\boldsymbol{T}}}, \overline{\mbox{Ran}} \Delta_{\boldsymbol{T}})$ for each $\boldsymbol{z} \in \Omega.$ Therefore, the characteristic function $\theta_{\boldsymbol{T}}$ of $\boldsymbol{T}$ is a $\mathcal{B}(\mathcal{D}_{\tilde{\boldsymbol{T}}}, \overline{\mbox{Ran}} \Delta_{\boldsymbol{T}})$-valued analytic function on $\Omega$. Also, note that if $\boldsymbol{z} \in \Omega$, then
$$\boldsymbol{Z}\tilde{\boldsymbol{T}}^* = \displaystyle \sum_{\s>\underline 0}b_{\s} \sum_{\alpha = 1}^{d_{\s}} \psi_{\alpha}^{\s}(\boldsymbol{z}) \psi_{\alpha}^{\s}(\boldsymbol{T})^*$$
and consequently, we have
$$\left(I_{\mathcal{H}} - \boldsymbol{Z}\tilde{\boldsymbol{T}}^*\right)^{-1} = K_{\boldsymbol{z}}(\boldsymbol{T})^*.$$
The proofs of the following lemmas are consequences of a straightforward computation and therefore omitted. 

\begin{lemma}\label{lem:cha iden}
For every $\boldsymbol{z}, \boldsymbol{w} \in \Omega$, we have the following identity
$$I - \theta_{\boldsymbol{T}}(\boldsymbol{z}) \theta_{\boldsymbol{T}}(\boldsymbol{w})^* = \frac{1}{K(\boldsymbol{z}, \boldsymbol{w})} \Delta_{\boldsymbol{T}} \left(K_{\boldsymbol{z}}(\boldsymbol{T})\right)^* K_{\boldsymbol{w}}(\boldsymbol{T}) \Delta_{\boldsymbol{T}}.$$
\end{lemma}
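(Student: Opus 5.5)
We adapt the classical Sz.-Nagy--Foias computation for $I-\theta_T(z)\theta_T(w)^*$, with $\tilde{\boldsymbol T}$ playing the role of $T$ (with $D_{T^*}$ replaced by $\Delta_{\boldsymbol T}$) and $\boldsymbol Z$ the role of the scalar $z$. Write $A_{\boldsymbol z}:=(I_{\mathcal H}-\boldsymbol Z\tilde{\boldsymbol T}^*)^{-1}=K_{\boldsymbol z}(\boldsymbol T)^*$, so $\theta_{\boldsymbol T}(\boldsymbol z)=-\tilde{\boldsymbol T}+\Delta_{\boldsymbol T}A_{\boldsymbol z}\boldsymbol Z D_{\tilde{\boldsymbol T}}$ on $\mathcal D_{\tilde{\boldsymbol T}}$. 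Since $\theta_{\boldsymbol T}(\boldsymbol z)$ vanishes on $\mathcal D_{\tilde{\boldsymbol T}}^\perp=\ker D_{\tilde{\boldsymbol T}}$ only after restriction, we compute $\theta_{\boldsymbol T}(\boldsymbol z)P\theta_{\boldsymbol T}(\boldsymbol w)^*$ with $P$ the projection onto $\mathcal D_{\tilde{\boldsymbol T}}$. We use the facts that $D_{\tilde{\boldsymbol T}}P=D_{\tilde{\boldsymbol T}}$ and that $\tilde{\boldsymbol T}P\tilde{\boldsymbol T}^*$ can be handled through \eqref{cha:eqn:TDT}. In practice it is cleaner to view $\theta_{\boldsymbol T}(\boldsymbol z)$ as an operator on $\tilde{\mathcal H}$, compute there, and note that the relevant adjoint $\theta_{\boldsymbol T}(\boldsymbol w)^*$ maps $\overline{\mathrm{Ran}}\,\Delta_{\boldsymbol T}$ into $\mathcal D_{\tilde{\boldsymbol T}}$. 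This follows since $\tilde{\boldsymbol T}^*\Delta_{\boldsymbol T}=D_{\tilde{\boldsymbol T}}\tilde{\boldsymbol T}^*$, which is the adjoint of \eqref{cha:eqn:TDT}.

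Expanding gives four terms:
\[
\tilde{\boldsymbol T}\tilde{\boldsymbol T}^*,\qquad -\tilde{\boldsymbol T}D_{\tilde{\boldsymbol T}}\boldsymbol W^*A_{\boldsymbol w}^*\Delta_{\boldsymbol T},\qquad -\Delta_{\boldsymbol T}A_{\boldsymbol z}\boldsymbol Z D_{\tilde{\boldsymbol T}}\tilde{\boldsymbol T}^*,\qquad \Delta_{\boldsymbol T}A_{\boldsymbol z}\boldsymbol Z D_{\tilde{\boldsymbol T}}^2\boldsymbol W^*A_{\boldsymbol w}^*\Delta_{\boldsymbol T}.
\]
Using \eqref{cha:eqn:TDT}, $\tilde{\boldsymbol T}\tilde{\boldsymbol T}^*=I-\Delta_{\boldsymbol T}^2$ and $D_{\tilde{\boldsymbol T}}^2=I-\tilde{\boldsymbol T}^*\tilde{\boldsymbol T}$, every term becomes $\Delta_{\boldsymbol T}(\cdots)\Delta_{\boldsymbol T}$ except $\tilde{\boldsymbol T}\tilde{\boldsymbol T}^*$, which cancels against $I$ up to $\Delta_{\boldsymbol T}^2$. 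This yields
\[
I-\theta_{\boldsymbol T}(\boldsymbol z)\theta_{\boldsymbol T}(\boldsymbol w)^*=\Delta_{\boldsymbol T}\Big[I+A_{\boldsymbol z}\boldsymbol Z\tilde{\boldsymbol T}^*+\tilde{\boldsymbol T}\boldsymbol W^*A_{\boldsymbol w}^*-A_{\boldsymbol z}\boldsymbol Z(I-\tilde{\boldsymbol T}^*\tilde{\boldsymbol T})\boldsymbol W^*A_{\boldsymbol w}^*\Big]\Delta_{\boldsymbol T}.
\]
Next we factor the bracket as $A_{\boldsymbol z}\big[(I-\boldsymbol Z\tilde{\boldsymbol T}^*)(I-\tilde{\boldsymbol T}\boldsymbol W^*)+\boldsymbol Z\tilde{\boldsymbol T}^*(I-\tilde{\boldsymbol T}\boldsymbol W^*)+(I-\boldsymbol Z\tilde{\boldsymbol T}^*)\tilde{\boldsymbol T}\boldsymbol W^*-\boldsymbol Z\boldsymbol W^*+\boldsymbol Z\tilde{\boldsymbol T}^*\tilde{\boldsymbol T}\boldsymbol W^*\big]A_{\boldsymbol w}^*$. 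The terms telescope to $A_{\boldsymbol z}(I-\boldsymbol Z\boldsymbol W^*)A_{\boldsymbol w}^*$, exactly as in the one-variable case.

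Finally we identify the scalar $\boldsymbol Z\boldsymbol W^*=\sum_{\s>\underline 0}b_{\s}\sum_\alpha\psi^{\s}_\alpha(\boldsymbol z)\overline{\psi^{\s}_\alpha(\boldsymbol w)}\,I=\big(1-\tfrac1{K(\boldsymbol z,\boldsymbol w)}\big)I$, by Corollary \ref{k-inv-cnp} and \eqref{ortho}. Hence $I-\boldsymbol Z\boldsymbol W^*=\frac{1}{K(\boldsymbol z,\boldsymbol w)}I$. Since this is a scalar, it commutes out, and $A_{\boldsymbol z}A_{\boldsymbol w}^*=K_{\boldsymbol z}(\boldsymbol T)^*K_{\boldsymbol w}(\boldsymbol T)$ gives the claim. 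The main obstacle is bookkeeping rather than ideas. We must justify that the infinite operator matrices $\boldsymbol Z,\tilde{\boldsymbol T}$ compose as claimed: $\boldsymbol Z\tilde{\boldsymbol T}^*$ converges strongly by Lemma \ref{lem:fun cal}, and $\tilde{\boldsymbol T}\boldsymbol W^*$ converges by adjointness. We must also handle the domain restriction to $\mathcal D_{\tilde{\boldsymbol T}}$ carefully. I would check the telescoping identity first on finite truncations if necessary.
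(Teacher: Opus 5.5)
Your proof is correct, and it is the straightforward Sz.-Nagy--Foias-type computation that the paper omits. You use the unrestricted operators on $\tilde{\mathcal H}$ (legitimate, since $\tilde{\boldsymbol T}^*\Delta_{\boldsymbol T}=D_{\tilde{\boldsymbol T}}\tilde{\boldsymbol T}^*$), apply $\tilde{\boldsymbol T}D_{\tilde{\boldsymbol T}}=\Delta_{\boldsymbol T}\tilde{\boldsymbol T}$, telescope the bracket to $A_{\boldsymbol z}(I-\boldsymbol Z\boldsymbol W^*)A_{\boldsymbol w}^*$, and identify $\boldsymbol Z\boldsymbol W^*=\bigl(1-1/K(\boldsymbol z,\boldsymbol w)\bigr)I$.

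One minor point: you do not need Lemma \ref{lem:fun cal} to make sense of $\boldsymbol Z\tilde{\boldsymbol T}^*$ and $\tilde{\boldsymbol T}\boldsymbol W^*$. Your attribution is also slightly off: that lemma covers $\tilde{\boldsymbol T}\boldsymbol W^*$, not $\boldsymbol Z\tilde{\boldsymbol T}^*$, and strong convergence does not pass to the adjoint series in general. These products are simply compositions of bounded operators, and their defining sums converge absolutely because $b_{\s}\ge 0$.
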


\begin{lemma}\label{lem:V_T*}
For any $\boldsymbol{w} \in \Omega$ and $\xi \in \overline{\mbox{Ran}}\,\Delta_{\boldsymbol{T}}$, we have
$$V_{\boldsymbol{T}}^* \left(K_{\boldsymbol{w}} \otimes \xi\right) = K_{\boldsymbol{w}} (\boldsymbol{T}) \Delta_{\boldsymbol{T}} \xi,$$
where the operator $V_{\boldsymbol{T}}$ is given by Equation \eqref{char:eqn:def V_T}.
\end{lemma}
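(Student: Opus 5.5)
Since $V_{\boldsymbol T}^*$ is bounded, I will compute it on a single kernel vector through its adjoint, by pairing against an arbitrary $h\in\mathcal H$ and using the definition \eqref{char:eqn:def V_T} of $V_{\boldsymbol T}$. Here $K_{\boldsymbol w}=K(\cdot,\boldsymbol w)$ and $V_{\boldsymbol T}$ is a contraction (Corollary \ref{cor:vt contraction}). Then
$$\left\langle V_{\boldsymbol T}^*(K_{\boldsymbol w}\otimes\xi),h\right\rangle=\left\langle K_{\boldsymbol w}\otimes\xi,\ \sum_{\s\geq\underline 0}a_{\s}\sum_{\alpha=1}^{d_{\s}}\psi_\alpha^{\s}\otimes\Delta_{\boldsymbol T}\psi_\alpha^{\s}(\boldsymbol T)^*h\right\rangle .$$
The series defining $V_{\boldsymbol T}h$ converges in norm in $H_K\otimes\overline{\mbox{Ran}}\,\Delta_{\boldsymbol T}$, because its squared norm is the convergent sum in Proposition \ref{lem:cha:2} and its terms are mutually orthogonal across signatures. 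So the inner product may be taken term by term.

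For each term, the reproducing property gives $\langle K_{\boldsymbol w},\psi_\alpha^{\s}\rangle_{H_K}=\overline{\psi_\alpha^{\s}(\boldsymbol w)}$. The resulting term is therefore $a_{\s}\overline{\psi_\alpha^{\s}(\boldsymbol w)}\,\langle\xi,\Delta_{\boldsymbol T}\psi_\alpha^{\s}(\boldsymbol T)^*h\rangle=\langle a_{\s}\overline{\psi_\alpha^{\s}(\boldsymbol w)}\psi_\alpha^{\s}(\boldsymbol T)\Delta_{\boldsymbol T}\xi,h\rangle$. Summing over $\s$ and $\alpha$ gives
$$\left\langle V_{\boldsymbol T}^*(K_{\boldsymbol w}\otimes\xi),h\right\rangle=\lim_{N\to\infty}\left\langle\sum_{|\s|\leq N}a_{\s}\sum_{\alpha=1}^{d_{\s}}\overline{\psi_\alpha^{\s}(\boldsymbol w)}\psi_\alpha^{\s}(\boldsymbol T)\Delta_{\boldsymbol T}\xi,\ h\right\rangle .$$
By Lemma \ref{lem:fun cal}, the partial sums converge strongly to $K_{\boldsymbol w}(\boldsymbol T)$. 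Hence the limit equals $\langle K_{\boldsymbol w}(\boldsymbol T)\Delta_{\boldsymbol T}\xi,h\rangle$, and since $h$ is arbitrary the identity follows.

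The one point needing care is the interchange of the inner product with the infinite sum. It rests on two facts: the norm convergence of the series for $V_{\boldsymbol T}h$, which follows from Proposition \ref{lem:cha:2} together with the orthogonality of the $\psi_\alpha^{\s}$ in $H_K$ (where $\|\psi_\alpha^{\s}\|^2=1/a_{\s}$), and the strong convergence supplied by Lemma \ref{lem:fun cal} under Conditions (A) and (B). The rest is bookkeeping.
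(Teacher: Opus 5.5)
Your argument is correct. The paper omits this proof as a ``straightforward computation,'' and your approach is exactly that computation: you pair $V_{\boldsymbol T}^*(K_{\boldsymbol w}\otimes\xi)$ against $h$, expand $V_{\boldsymbol T}h$ term by term via the reproducing property $\langle K_{\boldsymbol w},\psi_\alpha^{\s}\rangle=\overline{\psi_\alpha^{\s}(\boldsymbol w)}$, and resum. You also justify the interchange of inner product and sum, which the paper leaves implicit: norm convergence of $V_{\boldsymbol T}h$ follows from Proposition \ref{lem:cha:2} and the orthogonality of the $\psi_\alpha^{\s}$ with $\|\psi_\alpha^{\s}\|^2=1/a_{\s}$, and strong convergence of the resummed series comes from Lemma \ref{lem:fun cal}.
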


The proof of the corollary below follows immediately from Lemma \ref{lem:cha iden} and Lemma \ref{lem:V_T*}.

\begin{corollary}\label{cor:V_T^* inn}
For any $\boldsymbol{z}, \boldsymbol{w} \in \Omega$ and $\xi, \eta \in \overline{\mbox{Ran}} \,\Delta_{\boldsymbol{T}}$, we have
$$\left\langle V_{\boldsymbol{T}}^* (K_{\boldsymbol{w}} \otimes \xi), V_{\boldsymbol{T}}^* (K_{\boldsymbol{z}} \otimes \eta \right\rangle = K(\boldsymbol{z}, \boldsymbol{w}) \left\langle \left( I - \theta_{\boldsymbol{T}}(\boldsymbol{z}) \theta_{\boldsymbol{T}}(\boldsymbol{w})^* \right) \xi, \eta \right\rangle.$$
\end{corollary}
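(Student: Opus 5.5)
The plan is to expand the left-hand side with Lemma \ref{lem:V_T*} and then substitute the identity of Lemma \ref{lem:cha iden}. First apply Lemma \ref{lem:V_T*} to both vectors. This gives $V_{\boldsymbol{T}}^*(K_{\boldsymbol{w}}\otimes\xi) = K_{\boldsymbol{w}}(\boldsymbol{T})\Delta_{\boldsymbol{T}}\xi$ and $V_{\boldsymbol{T}}^*(K_{\boldsymbol{z}}\otimes\eta) = K_{\boldsymbol{z}}(\boldsymbol{T})\Delta_{\boldsymbol{T}}\eta$. Hence
$$\left\langle V_{\boldsymbol{T}}^* (K_{\boldsymbol{w}} \otimes \xi), V_{\boldsymbol{T}}^* (K_{\boldsymbol{z}} \otimes \eta) \right\rangle = \left\langle \Delta_{\boldsymbol{T}} K_{\boldsymbol{z}}(\boldsymbol{T})^* K_{\boldsymbol{w}}(\boldsymbol{T}) \Delta_{\boldsymbol{T}}\xi, \eta\right\rangle,$$
where I move $K_{\boldsymbol{z}}(\boldsymbol{T})\Delta_{\boldsymbol{T}}$ to the other side as its adjoint, using that $\Delta_{\boldsymbol{T}}$ is self-adjoint. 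The operators $K_{\boldsymbol{w}}(\boldsymbol{T})$ are bounded, being strong limits by Lemma \ref{lem:fun cal}, so taking adjoints is legitimate.

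Next I use Lemma \ref{lem:cha iden}. Multiplying it by the nonzero scalar $K(\boldsymbol{z},\boldsymbol{w})$ (the kernel is non-vanishing) gives
$$\Delta_{\boldsymbol{T}} K_{\boldsymbol{z}}(\boldsymbol{T})^* K_{\boldsymbol{w}}(\boldsymbol{T}) \Delta_{\boldsymbol{T}} = K(\boldsymbol{z},\boldsymbol{w})\left(I - \theta_{\boldsymbol{T}}(\boldsymbol{z})\theta_{\boldsymbol{T}}(\boldsymbol{w})^*\right)$$
as operators on $\overline{\mbox{Ran}}\,\Delta_{\boldsymbol{T}}$. Substituting this into the inner product above gives exactly the claimed formula.

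The only point needing care is bookkeeping: the orders of $\boldsymbol{z}$ and $\boldsymbol{w}$ and the placement of the adjoint must match the convention in Lemma \ref{lem:cha iden}. Specifically, the left factor must be $K_{\boldsymbol{z}}(\boldsymbol{T})^*$, which is paired with $\eta$ in the second slot of the inner product, where the inner product is linear in the first slot. I would check this by expanding on a single term of the series defining $K_{\boldsymbol{z}}(\boldsymbol{T})$ and confirming that the scalar that emerges is $K(\boldsymbol{z},\boldsymbol{w})$ rather than its conjugate. I do not expect any other obstacle.
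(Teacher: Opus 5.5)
Your proposal is correct and follows the paper's own route: the paper says only that this corollary follows immediately from Lemma~\ref{lem:V_T*}, which rewrites the two vectors as $K_{\boldsymbol{w}}(\boldsymbol{T})\Delta_{\boldsymbol{T}}\xi$ and $K_{\boldsymbol{z}}(\boldsymbol{T})\Delta_{\boldsymbol{T}}\eta$, together with Lemma~\ref{lem:cha iden}. That is exactly your argument. Your handling of the adjoints and of the roles of $\boldsymbol{z}$ and $\boldsymbol{w}$ agrees with the conventions in those two lemmas.
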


The characteristic function $\theta_{\boldsymbol{T}}$ of $\boldsymbol{T}$ gives rise to the multiplication operator $M_{\theta_{\boldsymbol{T}}} : H_{K} \otimes \mathcal{D}_{\tilde{\boldsymbol{T}}} \to H_{K} \otimes \overline{\mbox{Ran}}\,\Delta_{\boldsymbol{T}}$. The following theorem shows that the characteristic function $\theta_{\boldsymbol{T}}$ of $\boldsymbol{T}$, defined via Definition \ref{def:cha fun} satisfies the identity of Definition \ref{def:existence of cha fun}.

\begin{theorem}\label{thm:cha existence}
The multiplication operator $M_{\theta_{\boldsymbol{T}}} : H_{K} \otimes \mathcal{D}_{\tilde{\boldsymbol{T}}} \to H_{K} \otimes \overline{\mbox{Ran}}\,\Delta_{\boldsymbol{T}}$ corresponding to the characteristic function $\theta_{\boldsymbol{T}}$ of $\boldsymbol{T}$ satisfies the following identity
$$I - V_{\boldsymbol{T}} V_{\boldsymbol{T}}^* = M_{\theta_{\boldsymbol{T}}}M_{\theta_{\boldsymbol{T}}}^*.$$
\end{theorem}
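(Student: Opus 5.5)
We prove the operator identity by testing both sides against the total set of kernel vectors $K_{\boldsymbol w}\otimes\xi$, $\boldsymbol w\in\Omega$, $\xi\in\overline{\mbox{Ran}}\,\Delta_{\boldsymbol T}$. Their finite linear combinations are dense in $H_K\otimes\overline{\mbox{Ran}}\,\Delta_{\boldsymbol T}$. Both sides are bounded operators, provided $M_{\theta_{\boldsymbol T}}$ is bounded, and we deduce that along the way. It therefore suffices to show that the two sesquilinear forms coincide on such vectors.

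For the left-hand side we compute
$$\left\langle (I-V_{\boldsymbol T}V_{\boldsymbol T}^*)(K_{\boldsymbol w}\otimes\xi), K_{\boldsymbol z}\otimes\eta\right\rangle = K(\boldsymbol z,\boldsymbol w)\langle\xi,\eta\rangle - \left\langle V_{\boldsymbol T}^*(K_{\boldsymbol w}\otimes\xi), V_{\boldsymbol T}^*(K_{\boldsymbol z}\otimes\eta)\right\rangle .$$
By Corollary \ref{cor:V_T^* inn}, the second term equals $K(\boldsymbol z,\boldsymbol w)\langle(I-\theta_{\boldsymbol T}(\boldsymbol z)\theta_{\boldsymbol T}(\boldsymbol w)^*)\xi,\eta\rangle$. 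The left-hand side therefore reduces to $K(\boldsymbol z,\boldsymbol w)\langle\theta_{\boldsymbol T}(\boldsymbol z)\theta_{\boldsymbol T}(\boldsymbol w)^*\xi,\eta\rangle$.

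For the right-hand side we use the standard adjoint formula $M_{\theta}^*(K_{\boldsymbol w}\otimes\xi)=K_{\boldsymbol w}\otimes\theta(\boldsymbol w)^*\xi$. This holds for any bounded multiplier and follows by pairing with $f\otimes\zeta$ and applying the reproducing property. It gives
$$\left\langle M_{\theta_{\boldsymbol T}}M_{\theta_{\boldsymbol T}}^*(K_{\boldsymbol w}\otimes\xi),K_{\boldsymbol z}\otimes\eta\right\rangle = K(\boldsymbol z,\boldsymbol w)\left\langle\theta_{\boldsymbol T}(\boldsymbol z)\theta_{\boldsymbol T}(\boldsymbol w)^*\xi,\eta\right\rangle,$$
which agrees with the left-hand side.

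The main obstacle is justifying that $M_{\theta_{\boldsymbol T}}$ is a bounded operator on $H_K\otimes\mathcal D_{\tilde{\boldsymbol T}}$, since we did not assume $\theta_{\boldsymbol T}$ to be a contractive multiplier. We plan to reverse the argument. The computation above shows that the kernel $K(\boldsymbol z,\boldsymbol w)\,\theta_{\boldsymbol T}(\boldsymbol z)\theta_{\boldsymbol T}(\boldsymbol w)^*$ equals $\langle (I-V_{\boldsymbol T}V_{\boldsymbol T}^*)K_{\boldsymbol w}\otimes\cdot,\,K_{\boldsymbol z}\otimes\cdot\rangle$. Since $V_{\boldsymbol T}$ is a contraction (Corollary \ref{cor:vt contraction}), $I-V_{\boldsymbol T}V_{\boldsymbol T}^*\le I$. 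Hence $K(\boldsymbol z,\boldsymbol w)\bigl(I-\theta_{\boldsymbol T}(\boldsymbol z)\theta_{\boldsymbol T}(\boldsymbol w)^*\bigr)$ is a nonnegative definite kernel. By the standard multiplier criterion, this means that the densely defined map $K_{\boldsymbol w}\otimes\xi\mapsto K_{\boldsymbol w}\otimes\theta_{\boldsymbol T}(\boldsymbol w)^*\xi$ extends to a contraction. That contraction is $M_{\theta_{\boldsymbol T}}^*$, so $\theta_{\boldsymbol T}$ is a contractive multiplier and the formula above is legitimate. Density of kernel vectors together with boundedness of both sides then yields $I-V_{\boldsymbol T}V_{\boldsymbol T}^*=M_{\theta_{\boldsymbol T}}M_{\theta_{\boldsymbol T}}^*$.
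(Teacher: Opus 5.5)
Your proof is correct and follows the argument the paper intends. The paper omits the proof and cites the analogous Theorem 4.11 of \cite{Tiret}, but it sets up Lemmas~\ref{lem:cha iden}, \ref{lem:V_T*} and Corollary~\ref{cor:V_T^* inn} so that both sides can be compared on the kernel vectors $K_{\boldsymbol w}\otimes\xi$, exactly as you do. Your preliminary step showing that $\theta_{\boldsymbol T}$ is a contractive multiplier is a worthwhile addition, since the paper only records this afterwards as Corollary~\ref{cor:cha contrac}, and it does not even need contractivity of $V_{\boldsymbol T}$: by Corollary~\ref{cor:V_T^* inn}, $K(\boldsymbol z,\boldsymbol w)\bigl(I-\theta_{\boldsymbol T}(\boldsymbol z)\theta_{\boldsymbol T}(\boldsymbol w)^*\bigr)$ is directly the Gram kernel of the vectors $V_{\boldsymbol T}^*(K_{\boldsymbol w}\otimes\xi)$.
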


\begin{proof}
The proof is similar to the proof of \cite[Theorem 4.11]{Tiret}. Therefore, the proof of the theorem is omitted.    
\end{proof}

As a direct consequence of the theorem above, we obtain the following corollary.

\begin{corollary}\label{cor:cha contrac}
The characteristic function $\theta_{\boldsymbol{T}}$ of $\boldsymbol{T}$ is a bounded analytic function on $\Omega$. In particular, $\sup_{\boldsymbol{z} \in \Omega} \|\theta_{\boldsymbol{T}}(\boldsymbol{z})\| \leq 1.$    
\end{corollary}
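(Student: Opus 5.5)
The plan is to read off boundedness of $\theta_{\boldsymbol{T}}$ from the operator identity in Theorem \ref{thm:cha existence}, evaluating it against kernel vectors. Since $I - V_{\boldsymbol{T}}V_{\boldsymbol{T}}^*\le I$, the identity gives $M_{\theta_{\boldsymbol{T}}}M_{\theta_{\boldsymbol{T}}}^*\le I$. Hence $M_{\theta_{\boldsymbol{T}}}$ is a contraction from $H_K\otimes\mathcal{D}_{\tilde{\boldsymbol{T}}}$ into $H_K\otimes\overline{\mbox{Ran}}\,\Delta_{\boldsymbol{T}}$; in particular it is a bounded operator and not just densely defined. What remains is to pass from the norm of the multiplication operator to the pointwise sup norm of the symbol.

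For that step I would use the standard action of the adjoint of a multiplication operator on kernel functions,
$$M_{\theta_{\boldsymbol{T}}}^*(K_{\boldsymbol{z}}\otimes\eta)=K_{\boldsymbol{z}}\otimes\theta_{\boldsymbol{T}}(\boldsymbol{z})^*\eta,\qquad \boldsymbol{z}\in\Omega,\ \eta\in\overline{\mbox{Ran}}\,\Delta_{\boldsymbol{T}}.$$
This follows by pairing both sides with an arbitrary $f\otimes\xi$ and using the reproducing property. Taking norms and using $\|M_{\theta_{\boldsymbol{T}}}^*\|\le 1$ gives
$$K(\boldsymbol{z},\boldsymbol{z})\,\|\theta_{\boldsymbol{T}}(\boldsymbol{z})^*\eta\|^2\le K(\boldsymbol{z},\boldsymbol{z})\,\|\eta\|^2 .$$
Since $K(\boldsymbol{z},\boldsymbol{z})>0$ (recall $K$ is non-vanishing and $a_{\underline 0}=1$), we get $\|\theta_{\boldsymbol{T}}(\boldsymbol{z})\|=\|\theta_{\boldsymbol{T}}(\boldsymbol{z})^*\|\le 1$ for every $\boldsymbol{z}$. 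Taking the supremum over $\boldsymbol{z}\in\Omega$ yields the claimed bound.

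As an independent check that avoids any subtlety about the domain of $M_{\theta_{\boldsymbol{T}}}$, I would put $\boldsymbol{z}=\boldsymbol{w}$ in Lemma \ref{lem:cha iden}. This gives
$$I-\theta_{\boldsymbol{T}}(\boldsymbol{z})\theta_{\boldsymbol{T}}(\boldsymbol{z})^*=K(\boldsymbol{z},\boldsymbol{z})^{-1}\,\Delta_{\boldsymbol{T}}K_{\boldsymbol{z}}(\boldsymbol{T})^*K_{\boldsymbol{z}}(\boldsymbol{T})\Delta_{\boldsymbol{T}} .$$
The right-hand side is manifestly positive, being of the form $c\,A^*A$ with $c=K(\boldsymbol{z},\boldsymbol{z})^{-1}>0$. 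So this route also gives $\|\theta_{\boldsymbol{T}}(\boldsymbol{z})\|\le 1$ directly.

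Analyticity was already noted after Definition \ref{def:cha fun}: $\boldsymbol{Z}$ depends holomorphically on $\boldsymbol{z}$ and $I-\boldsymbol{Z}\tilde{\boldsymbol{T}}^*$ is invertible. The only point needing care is the kernel-vector formula for $M_{\theta_{\boldsymbol{T}}}^*$, which requires $M_{\theta_{\boldsymbol{T}}}$ to be a bounded operator. That boundedness is supplied by Theorem \ref{thm:cha existence}, so I expect no real obstacle.
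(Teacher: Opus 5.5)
Your proposal is correct and takes the paper's route. The paper records the corollary as a direct consequence of Theorem \ref{thm:cha existence}, namely $M_{\theta_{\boldsymbol T}}M_{\theta_{\boldsymbol T}}^* = I - V_{\boldsymbol T}V_{\boldsymbol T}^*\le I$, and your kernel-vector computation spells out how contractivity of $M_{\theta_{\boldsymbol T}}$ gives $\|\theta_{\boldsymbol T}(\boldsymbol z)\|\le 1$.

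Your independent check, setting $\boldsymbol z=\boldsymbol w$ in Lemma \ref{lem:cha iden}, is also valid and gives the pointwise bound directly. It yields only the sup-norm estimate, though, not the stronger fact that $\theta_{\boldsymbol T}$ is a contractive multiplier.
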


\begin{definition}\label{def:conincidence}
Let $\boldsymbol{T}$ and $\boldsymbol{R}$ be two $\frac{1}{K}$-contractions defined on Hilbert spaces $\mathcal{H}$ and $\mathcal{K}$, respectively. The characteristic functions $\theta_{\boldsymbol{T}}$ and $\theta_{\boldsymbol{R}}$ of $\boldsymbol{T}$ and $\boldsymbol{R}$, respectively, are said to coincide if there exist unitaries $\tau : \mathcal{D}_{\tilde{\boldsymbol{T}}} \to \mathcal{D}_{\tilde{\boldsymbol{R}}}$ and $\tau_* : \overline{\mbox{Ran}}\,\Delta_{\boldsymbol{T}} \to \overline{\mbox{Ran}}\,\Delta_{\boldsymbol{R}}$ such that 
$$\tau_* \theta_{\boldsymbol{T}}(\boldsymbol{z})  =  \theta_{\boldsymbol{R}}(\boldsymbol{z}) \tau$$
holds for every $\boldsymbol{z} \in \Omega$.
\end{definition}

\begin{theorem}\label{thm:model of pure}
If $\boldsymbol{T}$ is a pure $\frac{1}{K}$-contraction, then the tuple $\boldsymbol{T}$ is unitarily equivalent to the tuple $\left(P_{\mathbb H_{\boldsymbol{T}}} \left(M_{z_1} \otimes I_{\overline{\mbox{\tiny{Ran}}}\,\Delta_{\boldsymbol{T}}}\right)|_{\mathbb H_{\boldsymbol{T}}}, \ldots, P_{\mathbb H_{\boldsymbol{T}}} \left(M_{z_d} \otimes I_{\overline{\mbox{\tiny{Ran}}}\,\Delta_{\boldsymbol{T}}}\right)|_{\mathbb H_{\boldsymbol{T}}} \right)$, where
$\mathbb H_{\boldsymbol{T}} := \left(H_K \otimes \overline{\mbox{Ran}}\,\Delta_{\boldsymbol{T}} \right) \ominus M_{\theta_{\boldsymbol{T}}} \left(H_K \otimes \mathcal{D}_{\tilde{\boldsymbol{T}}}\right)$ and $P_{\mathbb H_{\boldsymbol{T}}}$ denote the orthogonal projection from $H_K \otimes \overline{\mbox{Ran}}\,\Delta_{\boldsymbol{T}}$ onto $\mathbb H_{\boldsymbol{T}}$.
\end{theorem}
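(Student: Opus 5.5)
Since $\boldsymbol{T}$ is pure, Theorem \ref{isometry} makes $V_{\boldsymbol{T}} : \mathcal{H} \to H_K \otimes \overline{\mbox{Ran}}\,\Delta_{\boldsymbol{T}}$ an isometry with $V_{\boldsymbol{T}}^*\left(M_{z_i}\otimes I\right) = T_i V_{\boldsymbol{T}}^*$ for each $i$. Taking adjoints gives $V_{\boldsymbol{T}} T_i^* = (M_{z_i}\otimes I)^* V_{\boldsymbol{T}}$. Hence $\mbox{Ran}\, V_{\boldsymbol{T}}$ is a closed subspace invariant under every $(M_{z_i}\otimes I)^*$, and $V_{\boldsymbol{T}}$ is a unitary from $\mathcal{H}$ onto it.

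For a co-invariant subspace $\mathcal{Q}$ of this kind, $P_{\mathcal{Q}}(M_{z_i}\otimes I)|_{\mathcal{Q}} = \left((M_{z_i}\otimes I)^*|_{\mathcal{Q}}\right)^*$. Therefore, for $h,g\in\mathcal{H}$,
$$\langle V_{\boldsymbol{T}}T_i h, V_{\boldsymbol{T}} g\rangle = \langle h, T_i^* g\rangle = \langle V_{\boldsymbol{T}}h, (M_{z_i}\otimes I)^* V_{\boldsymbol{T}} g\rangle,$$
and so $V_{\boldsymbol{T}} T_i V_{\boldsymbol{T}}^* = P_{\mathcal{Q}}(M_{z_i}\otimes I)|_{\mathcal{Q}}$ with $\mathcal{Q}=\mbox{Ran}\,V_{\boldsymbol{T}}$. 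This shows that $\boldsymbol{T}$ is unitarily equivalent, via $V_{\boldsymbol{T}}$, to the compression of $\boldsymbol{M}\otimes I$ to $\mbox{Ran}\,V_{\boldsymbol{T}}$.

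It remains to identify $\mbox{Ran}\,V_{\boldsymbol{T}}$ with $\mathbb H_{\boldsymbol{T}}$. By Theorem \ref{thm:cha existence}, $I - V_{\boldsymbol{T}}V_{\boldsymbol{T}}^* = M_{\theta_{\boldsymbol{T}}}M_{\theta_{\boldsymbol{T}}}^*$. The left side is the orthogonal projection $P$ onto $(\mbox{Ran}\,V_{\boldsymbol{T}})^\perp = \ker V_{\boldsymbol{T}}^*$, since $V_{\boldsymbol{T}}$ is an isometry. Because $M_{\theta_{\boldsymbol{T}}}M_{\theta_{\boldsymbol{T}}}^* = P$ is a projection, $M_{\theta_{\boldsymbol{T}}}^*$ restricted to $\mbox{Ran}\,P$ is isometric and $M_{\theta_{\boldsymbol{T}}}^*$ vanishes on $(\mbox{Ran}\,P)^\perp$. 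So $M_{\theta_{\boldsymbol{T}}}$ is a partial isometry whose final space is $\mbox{Ran}\,P$. In particular $M_{\theta_{\boldsymbol{T}}}(H_K\otimes\mathcal{D}_{\tilde{\boldsymbol{T}}})$ is closed and equals $\ker V_{\boldsymbol{T}}^*$. Taking orthogonal complements then gives $\mathbb H_{\boldsymbol{T}} = \mbox{Ran}\,V_{\boldsymbol{T}}$, and combining this with the previous paragraph proves the theorem.

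The main obstacle is the partial-isometry step: one must check that $M_{\theta_{\boldsymbol{T}}}$ is a bounded operator on the whole space (this follows from the factorization, since $\|M_{\theta_{\boldsymbol{T}}}^*\|\le 1$) and that its range is exactly $\ker V_{\boldsymbol{T}}^*$ rather than merely dense in it. I would argue this through the standard fact that $AA^*=P$ with $P$ a projection forces $A$ to be a partial isometry, so $\mbox{Ran}\,A = \mbox{Ran}\,AA^*$ is closed. The intertwining relations themselves are routine once Theorem \ref{isometry} is available.
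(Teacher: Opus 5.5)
Your proposal is correct and follows essentially the same route as the proof the paper omits, namely the standard model argument from the pure row-contraction case: $V_{\boldsymbol T}$ is an isometry intertwining $T_i^*$ with $(M_{z_i}\otimes I)^*$, so $\boldsymbol T$ is unitarily equivalent to the compression of $\boldsymbol M\otimes I$ to $\mbox{Ran}\,V_{\boldsymbol T}$, and the identity $I-V_{\boldsymbol T}V_{\boldsymbol T}^*=M_{\theta_{\boldsymbol T}}M_{\theta_{\boldsymbol T}}^*$ of Theorem~\ref{thm:cha existence} identifies that range with $\mathbb H_{\boldsymbol T}$. The step you flag as the main obstacle is actually harmless, since $\mathbb H_{\boldsymbol T}=\ker M_{\theta_{\boldsymbol T}}^*=\ker\bigl(M_{\theta_{\boldsymbol T}}M_{\theta_{\boldsymbol T}}^*\bigr)=\ker\bigl(I-V_{\boldsymbol T}V_{\boldsymbol T}^*\bigr)=\mbox{Ran}\,V_{\boldsymbol T}$ holds without knowing that $\mbox{Ran}\,M_{\theta_{\boldsymbol T}}$ is closed, although your partial-isometry argument does also show it is closed.
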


\begin{proof}
The proof if similar to the proof of \cite[Theorem 3.7]{Tiret2} and therefore, omitted. \end{proof}

The main result of this section is the following theorem, which states that for certain $\mathbb K$-invariant kernels $K$, the characteristic function of a pure $\frac{1}{K}$-contraction $\boldsymbol{T}$ determines the unitary equivalence class of $\boldsymbol{T}$. The proof follows along the line of the proof of \cite[Theorem 4.4]{Tiret2}. Therefore, we leave out the proof.

\begin{theorem}
Let $K$ be a $\mathbb K$-invariant kernel satisfying Condition (A).
    Two pure $\frac{1}{K}$-contractions are unitarily equivalent if and only if their characteristic functions coincide.
\end{theorem}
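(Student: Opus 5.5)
The plan is to prove both implications along the lines of the Sz.-Nagy--Foias argument, adapted as in \cite[Theorem 4.4]{Tiret2}. We use the functional model of Theorem \ref{thm:model of pure} together with Corollary \ref{cor:V_T^* inn}.

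\emph{Necessity.} Suppose $U:\mathcal H\to\mathcal K$ is a unitary with $UT_iU^*=R_i$ for every $i$. Then $U\psi_\alpha^{\s}(\boldsymbol T)U^*=\psi_\alpha^{\s}(\boldsymbol R)$ for all $\s,\alpha$. Since the defining series converges strongly, $U\Delta_{\boldsymbol T}^2U^*=\Delta_{\boldsymbol R}^2$, and hence $U\Delta_{\boldsymbol T}U^*=\Delta_{\boldsymbol R}$. We take $\tau_*:=U|_{\overline{\rm Ran}\,\Delta_{\boldsymbol T}}$. On $\tilde{\mathcal H}$ we set $\tilde U:=\bigoplus U$. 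Then $\tilde U\tilde{\boldsymbol T}^*=\tilde{\boldsymbol R}^*U$, which gives $\tilde UD_{\tilde{\boldsymbol T}}\tilde U^*=D_{\tilde{\boldsymbol R}}$, and we take $\tau:=\tilde U|_{\mathcal D_{\tilde{\boldsymbol T}}}$. Because $\tilde U$ acts diagonally, it intertwines $\boldsymbol Z$ for the two spaces, i.e. $U\boldsymbol Z=\boldsymbol Z\tilde U$. Substituting into \eqref{cha:eqn:cha} then yields $\tau_*\theta_{\boldsymbol T}(\boldsymbol z)=\theta_{\boldsymbol R}(\boldsymbol z)\tau$ directly.

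\emph{Sufficiency.} Assume $\tau_*\theta_{\boldsymbol T}(\boldsymbol z)=\theta_{\boldsymbol R}(\boldsymbol z)\tau$ for all $\boldsymbol z\in\Omega$. The unitary $W:=I_{H_K}\otimes\tau_*$ intertwines $\boldsymbol M\otimes I$ on the two spaces. Moreover, $W M_{\theta_{\boldsymbol T}}=M_{\theta_{\boldsymbol R}}(I\otimes\tau)$ and $I\otimes\tau$ is unitary, so $W$ maps $M_{\theta_{\boldsymbol T}}(H_K\otimes\mathcal D_{\tilde{\boldsymbol T}})$ onto $M_{\theta_{\boldsymbol R}}(H_K\otimes\mathcal D_{\tilde{\boldsymbol R}})$. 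It therefore maps $\mathbb H_{\boldsymbol T}$ onto $\mathbb H_{\boldsymbol R}$ and intertwines the compressions $P_{\mathbb H}(M_{z_i}\otimes I)|_{\mathbb H}$. By Theorem \ref{thm:model of pure}, $\boldsymbol T$ and $\boldsymbol R$ are each unitarily equivalent to these compressions, so $\boldsymbol T\cong\boldsymbol R$.

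The main obstacle is Theorem \ref{thm:model of pure} itself, which is where purity and Condition (A) are used. Its content is as follows. For pure $\boldsymbol T$, $V_{\boldsymbol T}$ is an isometry (Theorem \ref{isometry}), so $V_{\boldsymbol T}V_{\boldsymbol T}^*$ is the projection onto ${\rm Ran}\,V_{\boldsymbol T}$. By Theorem \ref{thm:cha existence}, $I-V_{\boldsymbol T}V_{\boldsymbol T}^*=M_{\theta_{\boldsymbol T}}M_{\theta_{\boldsymbol T}}^*$, which forces $M_{\theta_{\boldsymbol T}}$ to be a partial isometry. Its range is then closed and equals $(\mathrm{Ran}\,V_{\boldsymbol T})^\perp$, so $\mathbb H_{\boldsymbol T}={\rm Ran}\,V_{\boldsymbol T}$. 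The intertwining $V_{\boldsymbol T}^*(M_{z_i}\otimes I)=T_iV_{\boldsymbol T}^*$ then shows that $V_{\boldsymbol T}$ implements the unitary equivalence with the compressed tuple.

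The delicate point is that this chain needs the explicit $\theta_{\boldsymbol T}$ of Definition \ref{def:cha fun} to be available, i.e. $K$ must be CNP (so $b_{\s}\ge0$) and the functional calculus $K_{\boldsymbol w}(\boldsymbol T)$ must exist, which is why Condition (A) is required. We would also need to check carefully that the identity in Corollary \ref{cor:V_T^* inn} extends from kernel vectors $K_{\boldsymbol w}\otimes\xi$ to all of $H_K\otimes\overline{\rm Ran}\,\Delta_{\boldsymbol T}$ by density.
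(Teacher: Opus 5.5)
Your proposal is correct and follows essentially the route the paper indicates by deferring to \cite[Theorem 4.4]{Tiret2}: necessity by conjugating every ingredient of \eqref{cha:eqn:cha} with $U$ and $\tilde U=\bigoplus U$, and sufficiency by transporting the functional model of Theorem \ref{thm:model of pure} through $I_{H_K}\otimes\tau_*$. You are also right that the statement tacitly needs $K$ to be CNP, so that $\sqrt{b_{\s}}$ and hence $\theta_{\boldsymbol T}$ are defined; this is the standing assumption of that subsection.
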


\bibliographystyle{amsplain}

\end{document}